\newtheorem{thm}{Theorem}[section]
\newtheorem{cor}[thm]{Corollary}
\newtheorem{conj}[thm]{Conjecture}
\newtheorem{lem}[thm]{Lemma}
\newtheorem{prop}[thm]{Proposition}
\newtheorem{defn}[thm]{Definition}
\numberwithin{equation}{section}
\newtheorem{rem}[thm]{Remark}
\newcommand{\ric}{\mathrm{Ric}}
\newcommand{\tr}{\mathrm{tr}}
\newcommand{\ppt}{\frac{\partial}{\partial t}}
\newcommand{\ddt}{\frac{\mathrm{d}}{\mathrm{d} t}}
\newcommand{\mn}{\sqrt{-1}}
\newcommand{\de}{\partial}
\newcommand{\dbar}{\overline{\partial}}
\newcommand{\ddbar}{\sqrt{-1}\partial\overline{\partial}}
\newcommand{\md}{\mathrm{d}}
\begin{document}
\begin{CJK}{GBK}{song}
\title{A parabolic Monge-Amp\`ere type equation of Gauduchon metrics}
\makeatletter
\let\uppercasenonmath\@gobble
\let\MakeUppercase\relax
\let\scshape\relax
\makeatother
\author{Tao Zheng }
\address{Institut Fourier, Universit\'{e} Grenoble Alpes, 100 rue des maths,
Gi\`{e}res 38610, France}
\email{zhengtao08@amss.ac.cn}
\subjclass[2010]{53C55, 35J60, 32W20, 58J05}
\thanks {Supported by National Natural Science Foundation of China grant No. 11401023, and the author's post-doc is supported by the European Research Council (ERC) grant No. 670846 (ALKAGE)}
\keywords{Gauduchon metric,  (parabolic) Monge-Amp\`ere type equation, $C^{2,\gamma}$ estimate, Liouville type theorem}
\begin{abstract}
We prove the long time existence and uniqueness of solution to a parabolic Monge-Amp\`ere type equation on compact Hermitian manifolds. We also show that the normalization of the solution converges to a smooth function in the smooth topology as $t$ approaches infinity which, up to scaling, is the solution to a Monge-Amp\`ere type equation. This gives a parabolic proof of the Gauduchon conjecture based on the solution of Sz\'ekelyhidi, Tosatti and Weinkove to this conjecture.
\end{abstract}
\maketitle
\section{Introduction}
Let $(M,\alpha)$ be a compact complex Hermitian manifold with $\dim_{\mathbb{C}}M=n\geq 2$. Then the real $(1,1)$ form associated to the Hermitian metric $\alpha$ (denoted by itself) is defined by
$$
\alpha=\mn\alpha_{i\overline{j}}\md z^i\wedge\md \overline{z}^j.
$$
The Hermitian metric $\alpha$ is called
\emph{K\"{a}hler} if
$
\md \alpha=0,
$
\emph{Astheno-K\"{a}hler} (see \cite{jostyau}) if
$
\partial\dbar\alpha^{n-2}=0,
$
\emph{balanced} (see \cite{michelsohn}) if
$
\md\alpha^{n-1}=0,
$
\emph{Gauduchon} (see \cite{gauduchon1}) if
$
\partial\dbar\alpha^{n-1}=0,
$
and \emph{strongly Gauduchon} (see \cite{popovici}) if
$
\dbar\alpha^{n-1}\;\text{is}\;\partial\textrm-\text{exact}.
$

If $(M,\alpha)$ is a K\"{a}hler manifold, then Yau's solution \cite{yau1978} to the Calabi conjecture (see \cite{Cao} for a parabolic proof using the estimates in \cite{yau1978}) says that for any given smooth positive volume form $\sigma$ on $M$ satisfying $\int_M\sigma=\int_M\alpha^n$, there exists a unique K\"{a}hler metric $\omega$ with $[\omega]=[\alpha]\in H^{2}(M,\mathbb{R})$ such that
\begin{align}\label{volumform}
\omega^n=\sigma.
\end{align}
 Moreover, Yau's theorem is equivalent to the following statement. Given any $\Psi\in c_{1}(M)$, the first Chern class, we can find a unique K\"{a}hler metric $\omega$ with $[\omega]=[\alpha]\in H^{2}(M,\mathbb{R})$ such that
 \begin{align}\label{ricciform}
 \ric(\omega)=\Psi,
 \end{align}
where $\ric(\omega)$ is the Ricci form of the K\"{a}hler metric $\omega$ and can be defined  as
$$
\ric(\omega)=-\ddbar\log\det\omega.
$$

It is natural to ask whether there hold similar results when $M$ does not admit a K\"{a}hler metric, but only a Hermitian metric $\alpha$.
If there is no restriction on the class of Hermitian metrics, then we can solve \eqref{volumform} trivially by a conformal change of metric.

Tosatti and Weinkove \cite{TW4} proved that for any Hermitian metric $\alpha$ on $M$, there exists a Hermitian metric $\omega$ of the form $\omega=\alpha+\ddbar u$ with $u\in C^{\infty}(M,\mathbb{R})$ such that \eqref{ricciform} holds. (cf. \cite{Ch,gill,TW1} and see \cite{gill} for a parabolic proof based on \cite{TW4}). Chu, Tosatti and Weinkove \cite{ctw} recently proved similar results on almost Hermitian manifolds, based on which Chu \cite{chu1607} gave a parabolic proof.

Gauduchon \cite{gauduchon1} showed that there exists a unique Gauduchon metric up to scaling (when $n\geq 2$) in  the conformal class of any Hermitian metric $\alpha$.

Motivated by Yau's solution to the Calabi conjecture, Gauduchon \cite[Chapter IV.5]{gauduchon2} proposed the following conjecture.
\begin{conj}[Gauduchon \cite{gauduchon2}; 1984]\label{conj1}
Let $M$ be a compact Hermitian manifold and $\Psi$ be a closed real $(1,1)$ form on $M$ with $[\Psi]=c_{1}^{\mathrm{BC}}(M)\in H^{1,1}_{\mathrm{BC}}(M,\mathbb{R})$. Then there exists a Gauduchon metric satisfying \eqref{ricciform},
where $\ric(\omega)$ is the Chern-Ricci form of $\omega$ and can be expressed as
$$\ric(\omega)=-\ddbar\log\det\omega.$$
\end{conj}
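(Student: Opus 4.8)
The plan is to produce a Gauduchon metric satisfying \eqref{ricciform} as the long-time limit of a parabolic Monge-Amp\`ere type flow built on the elliptic reduction of Sz\'ekelyhidi--Tosatti--Weinkove (STW). By Gauduchon's theorem I first normalize the background so that $\alpha$ is itself Gauduchon, and since $[\Psi]=c_{1}^{\mathrm{BC}}(M)$ I fix $F\in C^{\infty}(M,\mathbb{R})$ with $\Psi=-\ddbar(\log\det\alpha+F)$. Using Michelsohn's bijection $\omega\mapsto\omega^{n-1}$ between positive $(1,1)$-forms and positive $(n-1,n-1)$-forms, for $u$ in a suitable open set I take the $(n-1,n-1)$-form $\Phi_{u}$ of the STW ansatz --- of the schematic shape $\Phi_{u}=\alpha^{n-1}+\ddbar u\wedge\alpha^{n-2}+(\text{lower order in }u)$, designed so that $\partial\dbar\Phi_{u}=\partial\dbar\alpha^{n-1}=0$ and $\Phi_{u}>0$ --- and I let $\tilde\omega_{u}>0$ be the unique $(1,1)$-form with $\tilde\omega_{u}^{\,n-1}=\Phi_{u}$, which is then automatically a Gauduchon metric. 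The flow to study is
\[
\ppt u=\log\frac{\tilde\omega_{u}^{\,n}}{\alpha^{n}}-F,\qquad u(\cdot,0)=0;
\]
its right-hand side equals, modulo lower-order terms, $\tfrac{n}{n-1}\log\det$ of a positive Hermitian endomorphism depending linearly on the complex Hessian of $u$, so the flow is parabolic (indeed uniformly so as long as $\tilde\omega_{u}>0$), and short-time existence and uniqueness on a maximal interval $[0,\tmax)$ follow from the standard implicit-function-theorem argument in parabolic H\"older spaces. Because the correction terms in $\Phi_{u}$ are $\partial\dbar$-closed, the identity $\partial\dbar\tilde\omega_{u}^{\,n-1}=0$ persists along the flow, so every time-slice is a Gauduchon metric.

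To promote this to $\tmax=\infty$ I would establish uniform estimates on $[0,\tmax)$. First, a zeroth-order bound for the normalized potential $\widetilde u:=u-\tfrac1V\int_{M}u\,\alpha^{n}$, $V:=\int_{M}\alpha^{n}$, by the maximum principle applied to $\ppt u$ combined with a Moser/ABP-type iteration adapted from the STW elliptic estimate. Second, a gradient bound for $\widetilde u$ via a blow-up argument whose limit object is an entire bounded solution on $\mathbb{C}^{n}$ of the homogeneous model equation, which the Liouville-type theorem forces to be constant. Third --- the decisive step --- a second-order bound on the eigenvalues of $\tilde\omega_{u}$, from the maximum principle applied to a test quantity of the form $\log\lambda_{\max}(\tilde\omega_{u})+\varphi(|\nabla u|^{2})-A\,\widetilde u$, the third-order and torsion error terms being absorbed exactly as in STW; this renders the flow uniformly parabolic. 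Fourth, the parabolic $C^{2,\gamma}$ estimate of Evans--Krylov type for the resulting concave operator, followed by parabolic Schauder bootstrapping, yields uniform $C^{\infty}$ bounds, and hence long-time existence.

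For the limit $t\to\infty$: since $\tilde\omega_{u}$ depends on $u$ only through its derivatives (so $\tilde\omega_{u}=\tilde\omega_{\widetilde u}$), the time-derivative $\dot u:=\ppt u$ solves a homogeneous linear parabolic equation with no zeroth-order term, whence $\max_{M}\dot u$ is non-increasing and $\min_{M}\dot u$ non-decreasing. An energy estimate giving $\int_{0}^{\infty}\!\!\int_{M}|\nabla\dot u|^{2}\,\alpha^{n}\,\md t<\infty$ forces $\mathrm{osc}_{M}\dot u(\cdot,t_{k})\to0$ along some sequence $t_{k}\to\infty$, hence $\mathrm{osc}_{M}\dot u\to0$ by that monotonicity, so $\dot u\to b$ uniformly for a constant $b\in\mathbb{R}$. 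Then $\ppt\widetilde u\to0$ uniformly, and by Arzel\`a--Ascoli together with the uniqueness in the elliptic problem of STW every $C^{\infty}$-subsequential limit of $\widetilde u(\cdot,t)$ coincides, so $\widetilde u(\cdot,t)\to u_{\infty}$ in $C^{\infty}(M)$ with $\tilde\omega_{u_{\infty}}^{\,n}=e^{F+b}\alpha^{n}$ and $\tilde\omega_{u_{\infty}}$ Gauduchon; therefore $\ric(\tilde\omega_{u_{\infty}})=-\ddbar\log\det\tilde\omega_{u_{\infty}}=-\ddbar(\log\det\alpha+F)=\Psi$, proving Conjecture \ref{conj1}.

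The main obstacle is the second-order estimate. The torsion of $\alpha$ produces first- and third-order error terms of indefinite sign, and because the governing operator is the $(n-1)$-st-root operator of the STW ansatz rather than $\log\det(\alpha+\ddbar u)$, the commutator and Cauchy--Schwarz bookkeeping is considerably heavier; this is precisely the technical heart of STW in the elliptic case, and carrying it over to the flow --- together with the parabolic $C^{2,\gamma}$ estimate for this non-standard concave operator --- is where the real work lies. A secondary difficulty is establishing the oscillation decay of $\dot u$, needed to pin down the constant $b$ and to upgrade subsequential convergence to convergence of the whole flow.
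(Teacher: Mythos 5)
Your outline up to long-time existence coincides with the paper's. The flow you write, with $\tilde\omega_u$ the positive $(1,1)$-form satisfying $\tilde\omega_u^{n-1}=\Phi_u$, is the paper's flow \eqref{paragau} with $\psi=(n-1)F$ after rescaling time by $n-1$ and taking $u_0=0$; the zeroth-order bound for $\tilde u$ via the maximum principle on $\dot u$ combined with the STW elliptic $L^\infty$ estimate, the gradient bound via a blow-up to the Liouville theorem of \cite{tw1305}, the second-order bound in the form $|\ddbar u|_\alpha\le CK$ with $K=1+\sup|\nabla u|_\alpha^2$ (which is exactly the dependence the blow-up argument needs), and the Evans--Krylov plus bootstrap conclusion all appear in Sections 3 through 6 of the paper.

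The genuine departure, and the gap, is the convergence step. You propose the energy estimate $\int_0^\infty\int_M|\nabla\dot u|^2\,\alpha^n\,\md t<\infty$, but this does not follow from the structure of the problem. The linearized operator $L=\Theta^{\overline{j}i}\partial_i\partial_{\overline{j}}+\tr_{\tilde\omega}Z(\cdot)$ is not in divergence form with respect to any fixed measure: $\Theta^{\overline{j}i}$ has nonvanishing divergence, and $\tr_{\tilde\omega}Z$ contributes an additional first-order drift of indefinite sign. Pairing $\ppt\dot u=L(\dot u)$ with $\dot u$ and integrating by parts yields, after Cauchy--Schwarz, only
$$\ddt\int_M(\dot u)^2\,\alpha^n+c\int_M|\nabla\dot u|^2\,\alpha^n\le C\int_M(\dot u)^2\,\alpha^n,$$
and since $\int_M(\dot u)^2\,\alpha^n$ is uniformly bounded but not a priori decaying, this does not produce time-integrability of $\int_M|\nabla\dot u|^2$. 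The paper instead proves a Li--Yau type Harnack inequality for positive solutions of $\ppt\varphi=L(\varphi)$ (Section 7, using the uniform $C^\infty$ bounds on the coefficients of $L$ supplied by the higher-order estimates), and applies it on each unit time interval to $\sup_M\dot u(\cdot,m-1)-\dot u(\cdot,m-1+t)$ and $\dot u(\cdot,m-1+t)-\inf_M\dot u(\cdot,m-1)$, which yields the exponential oscillation decay $\sup_M\dot u(\cdot,t)-\inf_M\dot u(\cdot,t)\le Ce^{-\eta t}$ directly. That single estimate does the work of both your energy estimate and your monotonicity plus Arzel\`a--Ascoli upgrade, and it also pins down the limit constant. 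You correctly flag the oscillation decay as a difficulty, but it is the one step in your outline that requires a different tool than the one you name.
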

Here $H^{1,1}_{\mathrm{BC}}(M,\mathbb{R})$ is the \emph{Bott-Chern cohomology group} and its definition  can be found in Section \ref{preliminary}.

This conjecture can also be restated as follows.
\begin{conj}\label{conj2}
Let $M$ be a compact complex Hermitian manifold and $\sigma$ be a smooth positive volume form. Then there exists a Gauduchon metric $\omega$ on $M$ satisfying \eqref{volumform}.
\end{conj}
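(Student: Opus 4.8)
The plan is to deduce Conjecture \ref{conj2} from the long time existence and convergence of a parabolic flow modelled on the elliptic equation of Sz\'ekelyhidi--Tosatti--Weinkove. By Gauduchon's theorem \cite{gauduchon1} we may fix, in the conformal class of $\alpha$, a Gauduchon metric $\omega_0$, and we write $\sigma=e^{F}\omega_0^n$ with $F\in C^\infty(M,\mathbb{R})$. For $u\in C^\infty(M,\mathbb{R})$ set
$$
\tilde\omega(u)=\omega_0+\frac{1}{n-1}\Big((\Delta_{\omega_0}u)\,\omega_0-\ddbar u\Big)+Z(\partial u,\dbar u),
$$
where $Z$ is the first order correction term of Sz\'ekelyhidi--Tosatti--Weinkove, built from the torsion of $\omega_0$ and linear in $\partial u,\dbar u$, chosen so that $\ddbar\big(\tilde\omega(u)^{n-1}\big)=\ddbar\big(\omega_0^{n-1}\big)$; since $\omega_0$ is Gauduchon this forces $\tilde\omega(u)$ to be Gauduchon as well, and note that $\tilde\omega(u)$ depends only on $\partial u,\dbar u$ and $\ddbar u$, hence is unchanged under $u\mapsto u+\mathrm{const}$. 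On the open set of functions for which $\tilde\omega(u)>0$ we consider the parabolic Monge-Amp\`ere type equation
$$
\ppt u=\log\frac{\tilde\omega(u)^n}{\omega_0^n}-F,\qquad u(\cdot,0)=0 .
$$

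I would first record short time existence: the equation is quasilinear and, as long as $\tilde\omega(u)$ stays positive, strictly parabolic, so standard theory gives a solution on a maximal interval $[0,\tmax)$ with $\tmax\in(0,\infty]$. The heart of the matter is then a family of a priori estimates on $[0,\tmax)$, \emph{uniform in $t$}. The time derivative $\dot u=\ppt u$ solves a linear parabolic equation, so the maximum principle bounds $\|\dot u\|_{C^0}$ and shows its spatial oscillation is non-increasing; next comes the zeroth order estimate, which I would get by adapting the Tosatti--Weinkove and Sz\'ekelyhidi--Tosatti--Weinkove $C^0$ estimate for the $(n-1)$ Monge-Amp\`ere equation (where the Gauduchon condition is used to integrate by parts) to the normalization $\tilde u:=u-\frac1V\int_M u\,\omega_0^n$; then the gradient estimate and the second order (Laplacian) estimate, following Sz\'ekelyhidi--Tosatti--Weinkove while carrying along the extra $\ppt$ terms and controlling all torsion contributions with constants independent of $t$. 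Since the $(n-1)$ Monge-Amp\`ere operator is not concave, Evans--Krylov is unavailable, and the $C^{2,\gamma}$ estimate has to be proved by a blow-up argument together with a Liouville type theorem for the limiting parabolic equation on $\mathbb{C}^n\times(-\infty,0]$; parabolic Schauder theory and bootstrapping then yield uniform bounds on all derivatives, so $\tmax=\infty$. Uniqueness of the solution on $[0,\infty)$ follows from a comparison principle for the flow.

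For the convergence as $t\to\infty$: the uniform estimates make $\{\tilde u(\cdot,t)\}$ precompact in $C^\infty(M)$, and, analyzing the linear parabolic equation satisfied by $\dot u$, I would prove $\mathrm{osc}_M\,\dot u(\cdot,t)\to 0$ exponentially fast. Consequently the normalized potentials $\tilde u(\cdot,t)$ form a Cauchy family and converge in $C^\infty(M)$ to some $u_\infty\in C^\infty(M,\mathbb{R})$, while $\dot u(\cdot,t)$ converges uniformly to a constant $-b$, where $b=\lim_{t\to\infty}\frac1V\int_M\big(F-\log\frac{\tilde\omega(u)^n}{\omega_0^n}\big)\omega_0^n$; the mechanism here is that, by the translation invariance noted above, $u_\infty-bt$ is itself a solution of the flow, so the flow is essentially a linear drift superposed on a convergent normalization. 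Passing to the limit in the flow gives
$$
\tilde\omega(u_\infty)^n=e^{F+b}\,\omega_0^n,\qquad \tilde\omega(u_\infty)>0,
$$
which is exactly the Monge-Amp\`ere type equation of Sz\'ekelyhidi--Tosatti--Weinkove (whose solvability with this constant $b$ is what we invoke from their work). Finally put $\omega:=e^{-b/n}\,\tilde\omega(u_\infty)$. A positive constant multiple of a Gauduchon metric is again Gauduchon, so $\omega$ is a Gauduchon metric, and $\omega^n=e^{-b}\,\tilde\omega(u_\infty)^n=e^{F}\omega_0^n=\sigma$, proving Conjecture \ref{conj2}. (There is no integrability constraint to check, precisely because, unlike in the K\"ahler setting, we are free to rescale the Gauduchon metric.)

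The step I expect to be the main obstacle is the a priori estimate package along the flow, and above all the parabolic $C^{2,\gamma}$ estimate: the lack of concavity of the $(n-1)$ Monge-Amp\`ere operator rules out Evans--Krylov, so one needs the blow-up plus Liouville type theorem for the parabolic model equation, and the torsion of $\omega_0$ generates numerous lower order terms in the evolutions of $\tr_{\omega_0}\tilde\omega$ and of $\dot u$ that must be absorbed carefully with constants that do not deteriorate as $t\to\infty$; securing these time-uniform higher order bounds is what makes the $C^\infty$ convergence, and hence the whole argument, go through.
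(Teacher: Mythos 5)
Your proposal follows essentially the same route as the paper: run a parabolic Monge--Amp\`ere type flow modelled on the Sz\'ekelyhidi--Tosatti--Weinkove elliptic equation, establish $t$-uniform a priori estimates (on $\dot u$, on the normalized potential, on $\nabla u$ and $\ddbar u$), deduce long time existence, then a Harnack-type exponential decay of $\mathrm{osc}_M\,\dot u$ to get $C^\infty$ convergence of $\tilde u$ and a scaled Gauduchon metric solving $\omega^n=\sigma$. This is precisely the content of Theorem \ref{mainthm} and its proof, and your final rescaling step is the standard elliptic-to-conjecture reduction. (The paper keeps two background metrics, $\alpha_0$ giving $\varpi$ and a Gauduchon $\alpha$ giving $Z$; you take them equal, which is a legitimate specialization for Conjecture \ref{conj2}.)

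Two points where you misdescribe the mechanics, though neither is fatal to the strategy. First, the operator here is in fact \emph{concave}: $\tilde F(\tilde B)=\log\det\tilde B$ is concave in the Hermitian endomorphism $\tilde B$, and the map $\ddbar u\mapsto\tilde B$ is affine, so $u\mapsto\log\det\left(\varpi+\frac1{n-1}[(\Delta u)\alpha-\ddbar u]+Z(u)\right)$ is concave in $\ddbar u$. Evans--Krylov type arguments are therefore available, and the paper obtains $C^{2,\gamma}$ directly from Chu's and Tosatti--Wang--Weinkove--Yang's $C^{2,\gamma}$ regularity for such nonstandard concave structures (see Lemma \ref{lemhighorder} and the cited \cite{chujianchuncvpde,chu1607,TWWY}); no blow-up is needed at that stage. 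Second, the blow-up plus Liouville type argument does appear, but for the \emph{gradient} estimate (Theorem \ref{1stestimate}), and the Liouville theorem used is the \emph{elliptic} one of Tosatti--Weinkove (Theorem \ref{ltthm}) for Lipschitz, bounded, maximal $(n-1)$-plurisubharmonic functions on $\mathbb{C}^n$: time is frozen at $t_j$ in the rescaling $\hat u_j(z)=u(C_j^{-1}z+x_j,t_j)$, and the limit satisfies the degenerate elliptic equation $P(u)^n=0$ rather than a parabolic equation on $\mathbb{C}^n\times(-\infty,0]$. Replacing the elliptic Liouville theorem by a hypothetical parabolic one would both change the argument and go beyond what is available; fortunately it is unnecessary, since $|\dot u|$ is uniformly bounded by the maximum principle, so the blow-up kills the time derivative.

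One last small wording issue: once the flow converges, the elliptic equation is \emph{solved} by $u_\infty$; one does not need to separately invoke Sz\'ekelyhidi--Tosatti--Weinkove's solvability with the constant $b$ — the flow produces the solution (one does, of course, rely on their estimates and their elliptic Liouville theorem along the way, which is what the paper means by ``based on'' their work).
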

Very recently, Sz\'ekelyhidi, Tosatti and Weinkove \cite{stw1503} solved this conjecture based on their previous works \cite{sz,tw1305,tw1310}. More precisely, they proved
\begin{thm}[Sz\'ekelyhidi, Tosatti and Weinkove \cite{stw1503}; 2015]\label{stwthm}
Let $M$ be a compact complex Hermitian manifold with a Gauduchon metric $\alpha_0$, and $\Psi$ be a closed real $(1,1)$ form on $M$ with $[\Psi]=c_{1}^{\mathrm{BC}}(M,\mathbb{R})\in H^{1,1}_{\mathrm{BC}}(M,\mathbb{R})$. Then there exists a Gauduchon metric $\omega$ with  $[\omega]_A=[\alpha_{0}^{n-1}]_A\in H_{A}^{n-1,n-1}(M,\mathbb{R})$ solving \eqref{ricciform}.
\end{thm}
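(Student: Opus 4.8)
The plan is to give a \emph{parabolic} proof, in the spirit of Cao's parabolic proof of the Calabi conjecture and Gill's parabolic proof of the Hermitian Calabi--Yau equation: I would introduce a Monge--Amp\`ere type flow whose potentials evolve inside a fixed Aeppli class, prove long time existence, and show that after removing a linear drift the normalized potentials converge smoothly to a stationary solution, which is exactly a Gauduchon metric solving \eqref{ricciform}. First I would reduce the statement to a scalar equation. Following Tosatti--Weinkove and Sz\'ekelyhidi--Tosatti--Weinkove, I would use the pointwise bijection between Hermitian metrics $\omega$ and positive $(n-1,n-1)$-forms, $\omega\mapsto\omega^{n-1}$, to parametrize the Gauduchon metrics in the Aeppli class $[\alpha_0^{n-1}]_A\in H_A^{n-1,n-1}(M,\mathbb{R})$ by a scalar potential $u$: let $\tilde\omega_u$ be the positive $(1,1)$-form determined by
\[
\tilde\omega_u^{n-1}=\alpha_0^{n-1}+\ddbar u\wedge\alpha_0^{n-2}+(\text{lower order torsion corrections}),
\]
the corrections being the fixed ones making the right-hand side a $\partial\dbar$-closed $(n-1,n-1)$-form; then $\tilde\omega_u$ is automatically Gauduchon with $[\tilde\omega_u^{n-1}]_A=[\alpha_0^{n-1}]_A$. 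Since $[\Psi]=c_1^{\mathrm{BC}}(M)=[\ric(\alpha_0)]_{\mathrm{BC}}$, the Bott--Chern $\ddbar$-lemma writes $\Psi-\ric(\alpha_0)$ as $\ddbar$ of a smooth function, so that \eqref{ricciform} for $\omega=\tilde\omega_u$ becomes the scalar Monge--Amp\`ere type equation $\tilde\omega_u^{n}=e^{F+b}\alpha_0^{n}$ for a known $F\in C^\infty(M,\mathbb{R})$ and an a priori unknown constant $b$ (the constant is forced because $\int_M\tilde\omega_u^{n}$ is not determined by the Aeppli class on a non-K\"ahler manifold, but it is harmless for \eqref{ricciform} since $\ddbar b=0$).

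\textbf{The flow and long time existence.} Then I would study
\[
\ppt u=\log\frac{\tilde\omega_u^{n}}{e^{F}\alpha_0^{n}},\qquad u(\cdot,0)=0 .
\]
Its linearization is a second-order elliptic operator which is positive definite whenever $\tilde\omega_u>0$ and $n\geq 2$, so the flow is strictly parabolic and admits a short-time solution on a maximal interval $[0,\tmax)$. The heart of the matter is a family of a priori estimates uniform on $[0,\tmax)$. First, a time-derivative estimate: differentiating the flow in $t$ and applying the parabolic maximum principle (the torsion terms are of lower order and can be absorbed) bounds $|\ppt u|$ and shows $\sup_M\ppt u$ is non-increasing and $\inf_M\ppt u$ non-decreasing. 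Second, the zeroth-order estimate: $\mathrm{osc}_M u(\cdot,t)$ is bounded, by an ABP- or Moser-type argument as in Sz\'ekelyhidi--Tosatti--Weinkove for the $(n-1,n-1)$-form equation --- this is the step where the Gauduchon hypothesis on $\alpha_0$ and the Aeppli normalization genuinely enter. Third, the second-order estimate: $\tr_{\alpha_0}\tilde\omega_u$ is bounded from above by Sz\'ekelyhidi's general maximum-principle estimate for concave operators (after checking the structural conditions), which makes $\tilde\omega_u$ uniformly equivalent to $\alpha_0$; a gradient estimate then follows, for instance by a blow-up argument together with the relevant Liouville type theorem (in the spirit of Dinew--Ko\l odziej and Tosatti--Weinkove). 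Once $C^0$, $C^1$ and $C^2$ are controlled the flow is uniformly parabolic and concave, so the parabolic Evans--Krylov theorem and Schauder bootstrapping give uniform $C^\infty$ bounds, whence $\tmax=\infty$.

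\textbf{Convergence and conclusion.} Using the monotonicity of $\sup_M\ppt u$ and $\inf_M\ppt u$, the uniform oscillation bound on $u$, and a further monotonicity (of a suitable energy of $\ppt u$), I would show $\mathrm{osc}_M\ppt u\to 0$ and $\ppt u\to b$ exponentially, with $b$ the constant above; then $\tilde u:=u-bt$ has bounded oscillation and the same spatial Hessian as $u$, so $\tilde\omega_{\tilde u}=\tilde\omega_u$, and interpolating with the uniform higher-order bounds yields $C^\infty$ convergence $\tilde u\to u_\infty$ with $\ppt\tilde u\to 0$, hence $\tilde\omega_{u_\infty}^{n}=e^{F+b}\alpha_0^{n}$. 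By construction $\omega:=\tilde\omega_{u_\infty}$ is then a Gauduchon metric with $[\omega^{n-1}]_A=[\alpha_0^{n-1}]_A$ (which is the meaning of $[\omega]_A=[\alpha_0^{n-1}]_A$ in the statement), and, unwinding the reduction, $\ric(\omega)=\Psi$; this is exactly Theorem~\ref{stwthm}.

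\textbf{Main obstacle.} I expect the difficulty to be twofold. The deepest single ingredient is the zeroth-order estimate: for this $(n-1,n-1)$-form equation on a genuinely non-K\"ahler manifold the volume form of $\tilde\omega_u$ is not directly comparable with $\alpha_0^{n}$, and one must exploit the Gauduchon/Aeppli structure exactly as in Sz\'ekelyhidi--Tosatti--Weinkove. The difficulty specific to the parabolic approach is the convergence: controlling the linear drift, identifying the limiting constant $b$, and extracting \emph{exponential} decay of $\mathrm{osc}_M\ppt u$ despite the torsion terms that spoil the naive maximum-principle monotonicity of $\ppt u$ --- keeping these torsion corrections under control through all the parabolic estimates is the technically delicate point.
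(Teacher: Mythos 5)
Your proposal takes essentially the same route as the paper: reduce \eqref{ricciform} to the scalar equation $\tilde\omega_u^n=e^{F+b}\alpha^n$ via the $\omega\mapsto\omega^{n-1}$ bijection and the Aeppli parametrization, run the parabolic flow \eqref{paragau}, obtain a $|\dot u|$ bound by the maximum principle, the oscillation bound from the Sz\'ekelyhidi--Tosatti--Weinkove $C^0$ estimate, a second-order estimate of the form $|\ddbar u|\leq C(1+\sup|\nabla u|^2)$, a gradient bound by blow-up plus the Tosatti--Weinkove Liouville theorem, Evans--Krylov/Schauder for long-time existence, and then convergence of the normalized potential to a solution of \eqref{stw1503equ}. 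The one small deviation is the convergence mechanism: the paper derives exponential decay of $\mathrm{osc}_M\dot u$ from a Li--Yau type Harnack inequality for the linearized operator $L$ rather than from an energy monotonicity for $\dot u$, and normalizes $u$ by subtracting its spatial $\alpha$-average rather than the linear drift $bt$, but these are cosmetic variants of the same scheme.
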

Here $H_{A}^{n-1,n-1}(M,\mathbb{R})$ is the \emph{Aeppli cohomology group} and its definition  can be found in Section \ref{preliminary}.

Tosatti and Weinkove \cite{tw1310} deduced that to obtain this theorem it is sufficient to solve the following  partial differential equation, which was also independently introduced by Popovici \cite{Po}.
They sought a Hermitian metric $\omega$ on $M$ with the property that
\begin{align}\label{omegan-1}
\omega^{n-1}=&\alpha_0^{n-1}+\partial\left(\frac{\mn}{2}\dbar u \wedge\alpha^{n-2}\right)+\overline{\partial\left(\frac{\mn}{2}\dbar u \wedge\alpha^{n-2}\right)}\\
=&\alpha_0^{n-1}+\ddbar u \wedge\alpha^{n-2}+\mathrm{Re}\left(\mn \partial u \wedge\dbar(\alpha^{n-2})\right),\nonumber
\end{align}
where $u \in C^\infty(M,\mathbb{R})$ and $\alpha$ is a background Gauduchon metric. If $\alpha_0$ is Gauduchon, then the metric $\omega$ is also Gauduchon. Note that there exists an $F\in C^{\infty}(M,\mathbb{R})$ such that $\ric(\alpha)=\Psi+\ddbar F$.
Now we can deduce that \eqref{omegan-1} is equivalent to
$$
\omega^n=e^{F+b}\alpha^n
$$
with some constant $b\in\mathbb{R}$. Note that Sz\'ekelyhidi, Tosatti and Weinkove \cite{stw1503} solved the following equivalent equation (their paper solved a family of Monge-Amp\`ere type equations including this one)
$$
\log\frac{\det\left(\ast\frac{\omega^{n-1}}{(n-1)!}\right)}{\det\alpha}
=\log\left(\frac{\det\omega}{\det\alpha}\right)^{n-1}=(n-1)(F+b),
$$
i.e.,
\begin{align}
\label{stw1503equ}
\log\frac{\left(\varpi+\frac{1}{n-1}\left[(\Delta u)\alpha-\ddbar u\right]+Z(u)\right)^n}{\alpha^n}=(n-1)(F+b),
\end{align}
where
$
\varpi=\frac{1}{(n-1)!}\ast\alpha_0^{n-1},
$
$
\Delta u=\alpha^{\overline{j}i}\partial_i\partial_{\overline{j}}u,
$
\begin{align}
\label{tildeomega}
\tilde{\omega}:=\varpi+\frac{1}{n-1}\left[(\Delta u)\alpha-\ddbar u\right]+Z(u)
=:\mn \tilde g_{i\overline{j}}\md z^i\wedge\md \overline{z}^j>0,
\end{align}
and
\begin{align}
\label{zu}
Z(u)=\frac{1}{(n-1)!}\ast\mathrm{Re}\left[\mn\de u\wedge\dbar(\alpha^{n-2})\right].
\end{align}
This question is a variant of the one introduced by Fu, Wang and Wu \cite{FWW1} and also related to the notion of $(n-1)$-plurisubharmonic (Psh) functions (see \cite{HL1,HL2}).

More remarks about Conjecture \ref{conj1} (also Conjecture \ref{conj2}) and applications of the methods in the proof of Theorem \ref{stwthm} can be found in \cite{stw1503} and references therein.

In this paper, for any $\psi\in C^{\infty}(M,\mathbb{R})$, we consider a parabolic version of \eqref{stw1503equ}, analogs to \cite{Cao,gill,chu1607}, as follows.
\begin{align}
\label{paragau}
\ppt u=\log\frac{\left(\varpi+\frac{1}{n-1}\left[(\Delta u)\alpha-\ddbar u\right]+Z(u)\right)^n}{\alpha^n}-\psi
\end{align}
with $u(0)=u_0\in C^{\infty}(M,\mathbb{R})$. In the following, we use \eqref{tildeomega} and \eqref{zu} with $u$ evolved by \eqref{paragau}.

It is easy to deduce that \eqref{paragau} is equivalent to the following flow
\begin{align*}
\ppt \alpha_t^{n-1}
=&-(n-1)\left(\ric(\alpha_t)-\ric(\alpha)+\frac{1}{n-1}\ddbar \psi\right)\wedge\alpha^{n-2}\\
&+(n-1)\mathrm{Re}\left(\mn\de\left(\log\frac{\alpha_t^n}{e^{\psi/(n-1)}\alpha^n}\right)\wedge\dbar(\alpha^{n-2})\right),
\end{align*}
with initial metric
$$
\alpha(0)^{n-1}=\alpha_0^{n-1}+\ddbar u_0 \wedge\alpha^{n-2}+\mathrm{Re}\left(\mn \partial u_0 \wedge\dbar(\alpha^{n-2})\right)>0.
$$
This flow  preserves the Gauduchon condition if the initial metric $\alpha_0$ is Gauduchon.
Indeed, taking $\partial \dbar$ on both sides of \eqref{pshflow2}, we can obtain
$$
\ppt \de\dbar\alpha^{n-1}_t=(n-1)\de\dbar\left(\partial\gamma+\overline{\partial\gamma}\right)=0,
$$
where
$$
\gamma=\frac{\mn}{2}\dbar \left(\log\frac{\alpha_t^n}{e^{\psi/(n-1)}\alpha^n}\right)\wedge\alpha^{n-2},
$$
as required. When $n=2$ this flow can be seen as the ``twisted"  Chern-Ricci flow (cf. \cite{ftwz,gill,gillmmp,gillscalar,twjdg,twcomplexsurface,twymathann,yangxiaokui,zhengtaocjm}). We show
\begin{thm}\label{mainthm}
Let $(M,\alpha_0)$ be a compact Hermitian manifold with $\dim_{\mathbb{C}}M=n\geq 3$ and $\alpha$ be a Gauduchon metric on $M$. Then there exists a unique solution $u$ to \eqref{paragau} on  $M\times[0,\,\infty)$ and if we define the normalization of $u$ by
$$\tilde u(x,t):=u(x,t)-\frac{1}{\mathrm{Vol}_{\alpha}(M)}\int_Mu(y,t)\alpha^{n}(y),$$
then $\tilde u$ converge smoothly to a function $\tilde u_{\infty}$ as $t\longrightarrow \infty$, and $\tilde u_{\infty}$ is the unique solution to \eqref{stw1503equ} by taking $\psi=(n-1)F$, up to adding a constant $\tilde b\in\mathbb{R}$ defined as in \eqref{btilde}.
\end{thm}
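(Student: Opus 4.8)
The plan is to run the standard parabolic machinery for the flow \eqref{paragau}: establish short-time existence, derive uniform a priori estimates (in $C^0$, then $C^1$, then $C^{2,\gamma}$, then all higher orders by bootstrapping), conclude long-time existence on $M\times[0,\infty)$, and finally prove smooth convergence of the normalization $\tilde u$ via a decay estimate on $\partial_t u$. Short-time existence is immediate since \eqref{paragau} is a (nonlinear, but uniformly parabolic near $t=0$) fully nonlinear parabolic equation of the unknown $u$; uniqueness on the interval of existence follows from the parabolic maximum principle applied to the difference of two solutions, using concavity of $\log\det$ in the positive-definite matrix entries and the fact that the first-order terms in $Z(u)$ and in $(\Delta u)\alpha-\ddbar u$ have coefficients independent of $u$.

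The core of the argument is the collection of time-uniform estimates. First I would bound $\partial_t u$: differentiating \eqref{paragau} in $t$ gives a linear parabolic equation $\partial_t(\partial_t u)=L(\partial_t u)$ for a (time-dependent) linear second-order operator $L$ built from the linearization of the operator $u\mapsto\tilde\omega(u)$ at the current solution, so the maximum principle yields $\sup_M|\partial_t u|(t)\le\sup_M|\partial_t u|(0)$, which is a fixed constant $C_0$. This immediately controls the ``volume form'' $\tilde\omega^n/\alpha^n=e^{\partial_t u+\psi}$ from above and below, uniformly in $t$. Next comes the $C^0$ estimate for the normalization $\tilde u$: here I would invoke exactly the argument of Sz\'ekelyhidi--Tosatti--Weinkove \cite{stw1503} (the elliptic $C^0$ estimate for \eqref{stw1503equ}), which only uses the uniform bound on the right-hand side and the Gauduchon hypothesis on $\alpha$, and applies verbatim at each fixed time $t$ to give $\sup_M|\tilde u|(t)\le C$. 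The $C^1$ and $C^{2,\gamma}$ estimates (uniform in $t$) are then obtained by quoting the corresponding a priori estimates from \cite{stw1503,tw1305,tw1310} — the second-order estimate is the delicate one, using the concavity of the equation and a carefully chosen test function, and a blow-up/Liouville-type argument (as flagged in the abstract) for the $C^{2,\gamma}$ bound. With $\|\tilde u\|_{C^{2,\gamma}}$ bounded uniformly, the linearized operator is uniformly parabolic with $C^{\gamma}$ coefficients, so parabolic Schauder theory and bootstrapping give uniform $C^k$ bounds for all $k$ on $M\times[0,\infty)$; this yields long-time existence.

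For convergence, the decisive step is to show $\partial_t u\to$ const (in fact $\partial_t u-\mathrm{avg}_\alpha(\partial_t u)\to 0$) exponentially. I would compute $\frac{d}{dt}\int_M(\partial_t u)^2 e^{?}$ or, more simply, apply the maximum principle to $\partial_t u$ together with a Harnack-type argument: since $\partial_t u$ solves a uniformly parabolic linear equation with uniformly bounded coefficients on the compact manifold $M$, its oscillation $\mathrm{osc}_M(\partial_t u)(t)$ decays exponentially as $t\to\infty$. Integrating in time then shows that $\tilde u(\cdot,t)$ is Cauchy in $C^0$, hence (using the uniform higher-order bounds and interpolation) $\tilde u\to\tilde u_\infty$ in $C^\infty$. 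Passing to the limit in \eqref{paragau}, the time-derivative term tends to a constant, so $\tilde u_\infty$ satisfies \eqref{stw1503equ} with $\psi=(n-1)F$ for the appropriate constant; uniqueness of that elliptic equation (again from \cite{stw1503}, up to an additive constant) identifies $\tilde u_\infty$ as claimed, with the constant shift $\tilde b$ determined by \eqref{btilde}. The main obstacle is the uniform-in-time second-order (and then $C^{2,\gamma}$) estimate: one must verify that the elliptic estimates of \cite{stw1503} are genuinely independent of $t$, i.e. that all constants depend only on $(M,\alpha,\alpha_0)$, on the uniform bound for $\partial_t u$, and on $\|\psi\|_{C^2}$ — not on the time-dependent geometry of $\tilde\omega$ — and that the Liouville-type theorem can be applied in the parabolic rescaling; this is where the bulk of the technical work lies.
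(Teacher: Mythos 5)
Your overall strategy — bound $\partial_t u$ by the maximum principle, bound $\tilde u$ by the elliptic $C^0$ estimate applied at each fixed time, get uniform second- and first-order estimates, upgrade to $C^{2,\gamma}$ and bootstrap, then prove exponential decay of $\mathrm{osc}_M(\partial_t u)$ via a parabolic Harnack inequality to drive convergence — is exactly the paper's plan, so the approach matches in structure. Two imprecisions are worth flagging, one minor and one that would need real work to repair. First, the blow-up/Liouville argument of \cite{tw1305,DK} is used in the paper to establish the \emph{first-order} (gradient) estimate, not the $C^{2,\gamma}$ estimate; the $C^{2,\gamma}$ step is a parabolic Evans--Krylov-type result (quoting Chu and TWWY), and the order of estimates matters: the second-order estimate is first proved \emph{in terms of} $K=1+\sup|\nabla u|^2_\alpha$, and only then is $K$ bounded by the blow-up argument. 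Second, your phrase ``quoting the corresponding a priori estimates from \cite{stw1503}'' for the second-order bound undersells the issue: the STW elliptic proof differentiates the equation and therefore needs control of derivatives of the right-hand side, which here is $h=\dot u+\psi$. You cannot a priori bound $\nabla \dot u$ or $\nabla\nabla\dot u$; instead, one must use the parabolic structure to substitute $\dot u_k=F^{rr}\nabla_k g_{r\overline r}-\psi_k$ and carry the extra $\partial_t\tilde H$ term through the maximum principle, and the dichotomy proposition (the paper's Proposition 4.5) acquires an extra $-|\dot u|$ compared with the elliptic Proposition 2.3 of \cite{stw1503}. You acknowledge that ``the bulk of the technical work lies'' here, which is correct, but the verification is not merely checking time-independence of constants — it is a genuine rederivation in the parabolic setting. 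With those caveats, the plan is sound and is the one the paper executes.
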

This gives a parabolic
proof of the Gauduchon conjecture based on the solution of Szekelyhidi,
Tosatti and Weinkove to this conjecture \cite{stw1503}. It is
analogous to H.-D. Cao's parabolic proof of the Calabi conjecture \cite{Cao}
based on Yau's work \cite{yau1978}, and to Gill's result in the Hermitian case \cite{gill} based
on Tosatti and Weinkove \cite{TW4}.

\begin{rem}
We can also consider another kind of
parabolic flow of Gauduchon metrics, a revised version of Gill \cite{gill1410}, given by
\begin{align}\label{pshflow2}
\ppt \alpha^{n-1}_t=-(n-1)\ric(\alpha_t)\wedge \alpha^{n-2}+(n-1)\mathrm{Re}\left(\mn\de\left(\log\frac{\alpha_t^n}{\alpha^n}\right)\wedge\dbar(\alpha^{n-2})\right)
\end{align}
with $\alpha(0)=\alpha_0$. Note that in the case $n=2$ this flow is exactly the Chern-Ricci flow.

If the initial metric $\alpha_0$ is Gauduchon and $\alpha$ is Astheno-K\"{a}hler, then it is easy to deduce that the flow \eqref{pshflow2} preserves the Gauduchon condition. Taking Aeppli cohomology, we can deduce
\begin{equation*}
\ddt [\alpha_t^{n-1}]_A=-(n-1)[\ric(\alpha)\wedge\alpha^{n-2}]_A,
\end{equation*}
the right side of which is independent of time $t$. Note that Gill \cite{gill1410} also introduced a parabolic flow suggested by Tosatti and Weinkove \cite{tw1305,tw1310}
\begin{align}\label{pshflow1}
\ppt \alpha^{n-1}_t=-(n-1)\ric(\alpha_t)\wedge \alpha^{n-2}
\end{align}
which also preserves the Gauduchon condition under the same assumptions as above. However, if we take Aeppli Cohomology on the both sides of \eqref{pshflow1}, then we get
\begin{equation*}
\ddt [\alpha_t^{n-1}]_A=-(n-1)[\ric(\alpha_t)\wedge\alpha^{n-2}]_A,
\end{equation*}
the right side of which is dependent on the time $t$.

If $M$ is non-K\"{a}hler Calabi-Yau manifold, i.e., $c_{1}^{\mathrm{BC}}(M)=0$, then we can take $\alpha$ to be the Chern-Ricci flat Hermitian metric by Tosatti and Weinkove \cite{TW4} and then it is easy to see that the flow \eqref{pshflow2} also preserves the Gauduchon condition.

Using the method in this paper originated from \cite{tw1305,tw1310,stw1503}, it follows that there exists a unique solution to \eqref{pshflow2} on $M\times[0,\,T)$, where
\begin{align*}
T:=&\sup\Big\{t\geq0:\;\exists\; \psi\in C^{\infty}(M,\,\mathbb{R}) \text{ such that}\\
&\quad\quad\quad\quad\quad\quad\Phi_t+\ddbar\psi\wedge\alpha^{n-2}
+\mathrm{Re}\left[\mn\de\psi\wedge\dbar(\alpha^{n-2})\right]>0\Big\},\nonumber
\end{align*}
with
\begin{equation*}
\Phi_t:=\alpha_0^{n-1}-t(n-1)\ric(\alpha)\wedge\alpha^{n-2}.
\end{equation*}
This solves a conjecture revised from \cite[Conjecture 1.2]{gill1410}.
\end{rem}
\begin{rem}
Let $M$ be a compact complex manifold with two Hermitian metrics $\omega_0$ and $\omega$, and $\dim_{\mathbb{C}}M=n$. Then for any $F\in C^{\infty}(M,\mathbb{R})$,  Tosatti and Weinkove \cite{tw1305,tw1310} proved that there exists a unique pair $(u,b)$ with $u\in C^{\infty}(M,\mathbb{R})$ and $b\in \mathbb{R}$ such that
\begin{equation} \label{equtw1310}
\det \left( \omega_{0}^{n-1} + \ddbar u \wedge \omega^{n-2} \right) = e^{F+b} \det \left( \omega^{n-1} \right),
\end{equation}
with $$\omega_0^{n-1} + \ddbar u \wedge \omega^{n-2} > 0, \quad \sup_M u=0.$$
If $\omega$ is K\"{a}hler, then this is a conjecture of Fu and Xiao \cite{FX}(see also \cite{FWW1,FWW2,Po2}).
We can also consider the parabolic version of \eqref{equtw1310}
\begin{equation} \label{tw1305remark1.5}
\ppt{} u = \log \frac{\left( \omega_h + \frac{1}{n-1} ( (\Delta u) \omega - \ddbar u) \right)^n}{\Omega},
\end{equation}
for a fixed volume form $\Omega$ and $\omega_h=\frac{1}{(n-1)!}\ast\omega_{0}^{n-1}$, where $\ast$ is respect to $\omega$.  Tosatti and Weinkove \cite[Remark 1.5]{tw1305} conjectured that the solutions to \eqref{tw1305remark1.5} exist for all time and converge (after normalization) to give solutions to \eqref{equtw1310} up to a constant. Using the method in this paper, we can confirm this conjecture.
\end{rem}
The paper is organized as follows. In section \ref{preliminary}, we collect some basic concepts about Hermitian manifolds.  In section \ref{secpreliminary}, we give the uniform bounds of the normalization of the solution $\tilde u$ to \eqref{paragau}. In section \ref{secsecond} and Section \ref{secfirst}, we give the second and first order priori estimates of the solution $u$ to \eqref{paragau} respectively. In Section \ref{proofofuniqueandlong}, we prove the long time existence and uniqueness of the equation \eqref{paragau} claimed as in the first part of Theorem \ref{mainthm}. In Section \ref{harnack}, we give the Harnack inequality for the equation \eqref{Ldefn} which will be used to prove the convergence of the normalization of the solution to \eqref{paragau} claimed as in the second part of Theorem \ref{mainthm} in Section \ref{convergence}.

\noindent {\bf Acknowledgements}
The author thanks Professor Valentino Tosatti and Professor Ben Weinkove for suggesting him this problem. The warmest thanks goes to Professor Valentino Tosatti for his helping the author overcoming the difficulties in the preparation for this paper, pointing out some mistakes in calculation, offering more references and so many other useful comments on an earlier version of this paper.
The author also thanks Professor Jean-Pierre Demailly for some useful comments and Doctor Wenshuai Jiang for some helpful discussions. The author is also grateful to the anonymous referees and the editor for their careful reading and helpful suggestions which greatly improved the paper.

\section{Preliminaries}\label{preliminary}
In this section, to avoid confusions, we collect some preliminaries about Hermitian geometry which will be used in this paper.

Let $(M,J,g)$ be a Hermitian manifold with $\dim_{\mathbb{C}}M=n$, $J$ be the canonical complex structure and $g$ be the Riemannian metric with $g(JX,\,JY)=g(X,\,Y)$ for any vector fields $X,\,Y\in \mathfrak{X}(M)$. Then in the real local coordinates $(x^1,\cdots,x^{2n})$ with
$$J\left(\partial/\partial x^i\right)=\partial/\partial x^{n+i},\;J\left(\partial/\partial x^{n+i}\right)=-\partial/\partial x^i,\;i=1,\cdots, n,$$
we have
$$
g_{ij}=g_{n+i,n+j},\quad g_{i,n+j}=-g_{n+i,j},\quad g_{\alpha\beta}=g_{\beta\alpha},\quad i,\,j=1,\cdots,n,\;\alpha,\,\beta=1,\cdots,2n,
$$
where $g_{\alpha\beta}=g\left(\partial/\partial x^{\alpha}, \partial/\partial x^{\beta}\right)$. We can define a real $2$-form $\omega$  by
$$
\omega(X,\,Y):=g(JX,\,Y),\quad \forall\;X,\,Y\in \mathfrak{X}(M).
$$
This form is determined uniquely by $g$ and vice versa. We define the volume element as usual by
$$\md V=\sqrt{\mathrm{det}(g_{\alpha\beta})}\md x^1\wedge\cdots\wedge\md x^{2n}.$$
For any $p$ form $\varphi$
$$
\varphi=\frac{1}{p!}\varphi_{i_1\cdots i_p}\md x^{i_1}\wedge\md x^{i_p},
$$
the star $\ast$ operator is defined by
\begin{align}\label{astdefn}
\psi\wedge\ast\varphi=\frac{1}{p!}\psi_{j_1\cdots j_p}g^{j_1 \ell_1}\cdots g^{j_1 \ell_p}\varphi_{\ell_1\cdots \ell_p}\ast 1=\frac{1}{p!}\psi_{j_1\cdots j_p}g^{j_1 \ell_1}\cdots g^{j_1 \ell_p}\varphi_{\ell_1\cdots \ell_p}\md V,
\end{align}
where $\psi=\frac{1}{p!}\psi_{i_1\cdots i_p}\md x^{i_1}\wedge\cdots\wedge\md x^{i_p}$ is another $p$ form. We define inner product by
$$
\langle\psi,\,\varphi\rangle:=\frac{1}{p!}\psi_{j_1\cdots j_p}g^{j_1 \ell_1}\cdots g^{j_1 \ell_p}\varphi_{\ell_1\cdots \ell_p}.
$$
It is easy to deduce that
\begin{align}\label{astdefn2}
\ast \varphi=\frac{1}{p!(2n-p)!}\sqrt{\mathrm{det}(g_{\alpha\beta})}\delta_{j_1\cdots j_p k_1\cdots k_{2n-p}}g^{j_1 \ell_1}\cdots g^{j_1 \ell_p}
\varphi_{\ell_1\cdots \ell_p}\md x^{k_1}\wedge\cdots\md x^{k_{2n-p}},
\end{align}
where $\delta_{j_1\cdots j_p k_1\cdots k_{2n-p}}$ is the general Kronecker symbol. It is easy to get
\begin{align}\label{relationast}
\ast \ast\varphi=(-1)^{2np+p}\varphi,
\end{align}
where note that $\dim_{\mathbb{R}}M=2n$.

In the complex local coordinates $$z=(z^1,\cdots,z^n)=(x^1+\sqrt{-1}x^{n+1},\cdots,x^n+\sqrt{-1}x^{2n}),$$
we denote $\partial_i=\partial/\partial z^i,\;\partial_{\overline{j}}=\partial/\partial\overline{z}^j,\;i,\,j=1,\cdots,n$. Then we have
\begin{align}
g=&\sum\limits_{i,j=1}^n g_{i\overline{j}}\left(\mathrm{d}z^i\otimes \mathrm{d}\overline{z}^j+ \mathrm{d}\overline{z}^j\otimes\mathrm{d}z^i\right),\nonumber\\
\label{kahlerform}\omega=&\mn\sum\limits_{i,j=1}^n g_{i\overline{j}}\mathrm{d}z^i\wedge \mathrm{d}\overline{z}^j,
\end{align}
where $g_{i\overline{j}}=\frac{1}{2} \left(g_{i,j}+\sqrt{-1}g_{i,n+j}\right)$. Then we can choose $\frac{\omega^n}{n!}$ as the volume element. Therefore, for any $(p,q)$ form
$$
\phi=\frac{1}{p!q!}\phi_{i_1\cdots i_p\overline{j_1}\cdots\overline{j_q}}\md z^{i_1}\wedge\cdots\wedge\md z^{i_p}\wedge\md \overline{z}^{j_1}\wedge\cdots\wedge\md \overline{z}^{j_q},
$$
using \eqref{astdefn} and \eqref{astdefn2}, we can deduce (see for example \cite{luqikeng})
\begin{align}\label{starformulacomplex}
\ast \phi =&\frac{(\mn)^n(-1)^{np+\frac{n(n-1)}{2}}\det g}{(n-p)!(n-q)!p!q!}\phi_{i_1\cdots i_p\overline{j_1}\cdots\overline{j_q}}g^{\overline{\ell_1}i_1}
\cdots g^{\overline{\ell_p}i_p}g^{\overline{j_1}k_1}\cdots g^{\overline{j_q}k_q}\\
&\delta_{\ell_1\cdots \ell_pb_1\cdots b_{n-p}}^{1\cdots\cdots n}\delta_{k_1\cdots k_qa_1\cdots a_{n-q}}^{1\cdots\cdots n}
\md z^{a_1}\wedge\md z^{a_{n-q}}\wedge\md \overline{z}^{b_1}\wedge\md\overline{z}^{b_{n-p}}\nonumber
\end{align}
and
\begin{align*}
\varpi\wedge\ast \overline{\phi}=\frac{1}{p!q!}\varpi_{\ell_1\cdots\ell_p\overline{k_1}\cdots \overline{k_{q}}} \overline{\phi_{i_1\cdots i_p\overline{j_1}\cdots\overline{j_q}}}g^{\overline{i_1}\ell_1}
\cdots g^{\overline{i_p}\ell_p}g^{\overline{k_1}j_1}\cdots g^{\overline{k_q}j_q}\frac{\omega^n}{n!},
\end{align*}
where $\varpi=\frac{1}{p!q!}\varpi_{i_1\cdots i_p\overline{j_1}\cdots\overline{j_q}}\md z^{i_1}\wedge\cdots\wedge\md z^{i_p}\wedge\md \overline{z}^{j_1}\wedge\cdots\wedge\md \overline{z}^{j_q}$ is another $(p,q)$ form and $\det g=\det(g_{i\overline{j}})$. We also define inner product by
$$
\langle\varpi,\,\phi\rangle:=\frac{1}{p!q!}\varpi_{\ell_1\cdots\ell_p\overline{k_1}\cdots \overline{k_{q}}} \overline{\phi_{i_1\cdots i_p\overline{j_1}\cdots\overline{j_q}}}g^{\overline{i_1}\ell_1}
\cdots g^{\overline{i_p}\ell_p}g^{\overline{k_1}j_1}\cdots g^{\overline{k_q}j_q}.
$$
Note that
$$
\ast 1=\frac{\omega^n}{n!},
\quad \overline{\ast \phi}= \ast\overline{ \phi},
$$
where the second equality shows that $\ast$ is a real operator.
From \eqref{relationast}, we can deduce
\begin{align*}
  \ast\ast \phi=(-1)^{p+q}\phi.
\end{align*}
The following basic concepts of positivity can be found in for example \cite[Chapter III]{demaillybook1}.

A $(p,p)$ form $\varphi$ is said to be positive   if for any $(1,0)$ forms $\gamma_j,\,1\leq j\leq n-p$, then
$$
\varphi\wedge\mn\gamma_1\wedge\overline{\gamma_1}\wedge\cdots\wedge\mn \gamma_{n-p}\wedge\overline{\gamma_{n-p}}
$$
is a positive $(n,n)$ form. Any positive $(p,p)$ form $\varphi$ is real, i.e., $\overline{\varphi}=\varphi$. In particular, in the local coordinates, a real $(1,1)$ form
\begin{align}\label{11}
\phi=\mn\phi_{i\overline{j}}\md z^i\wedge\md\overline{z}^j
\end{align}
is positive if and only if $(\phi_{i\overline{j}})$ is a semi-positive Hermitian matrix and we denote $\det \phi:=\det(\phi_{i\overline{j}})$. Similarly, a real $(n-1,n-1)$ form
\begin{align}\label{n-1}
\psi=&(\mn)^{n-1}\sum\limits_{i,j=1}^n(-1)^{\frac{n(n+1)}{2}+i+j+1}\psi^{\overline{j}i}\\
&\md z^1\wedge\cdots\wedge\widehat{\md z^i}\wedge\cdots\wedge\md z^n\wedge \md \overline{z}^1\wedge\cdots\wedge\widehat{\md\overline{z}^j}\wedge\cdots\wedge\md \overline{z}^n\nonumber
\end{align}
is positive if and only if $(\psi^{\overline{j}i})$ is a semi-positive Hermitian matrix and we denote $\det\psi:=\det(\psi^{\overline{j}i})$.
We remark that one can call a real $(1,1)$ form $\phi$ ( resp. a real $(n-1,n-1)$ form $\psi$) strictly positive
if the
Hermitian matrix $(\phi_{i \overline{j}})$ (resp. $(\psi^{\overline{j} i})$) is positive
definite.

For a strictly positive $(1,1)$ form $\phi$ defined as in \eqref{11}, we can deduce a strictly positive $(n-1,n-1)$ form
\begin{align}\label{11n1n1formula}
\frac{\phi^{n-1}}{(n-1)!}=&(\mn)^{n-1}\sum\limits_{k,\ell=1}^n(-1)^{\frac{n(n+1)}{2}+k+\ell+1}\mathrm{det}(\phi_{i\overline{j}})\tilde{\phi}^{\overline{\ell}k}\\
&\md z^{ 1}\wedge\cdots\wedge\widehat{\md z^k}\wedge \cdots\wedge\md z^{n}\wedge\md\overline{z}^{ 1}\wedge\cdots\wedge\widehat{\md \overline{z}^{\ell}}\wedge\cdots\wedge\cdots\wedge \md\overline{z}^{n}\nonumber
\end{align}
where $(\tilde{\phi}^{\overline{\ell}k} )$ is the inverse matrix of $(\phi_{i\overline{j}})$, i.e.,  $\sum\limits_{\ell=1}^n\tilde{\phi}^{\overline{\ell}j}\phi_{k\overline{\ell}}=\delta_{k}^j$. Hence we have
\begin{align}\label{detchin-1}
\det\left(\frac{\phi^{n-1}}{(n-1)!}\right)=\left(\det\phi\right)^{n-1}.
\end{align}
Furthermore, we have
\begin{lem}\label{11n1n1}
Let $(M, g)$ be a complex $n$-dimensional Hermitian manifold. Then there exists a bijection from the space of strictly positive definite $(1,1)$ forms to strictly positive definite $(n-1,n-1)$  forms, given by
\begin{align*}
\phi\mapsto \frac{\phi^{n-1}}{(n-1)!}.
\end{align*}
\end{lem}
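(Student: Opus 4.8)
The plan is to reduce the statement to an elementary fact about positive definite Hermitian matrices, using the explicit formula \eqref{11n1n1formula}. Work at a fixed point in local holomorphic coordinates. By \eqref{11} a strictly positive $(1,1)$ form $\phi$ is encoded by the positive definite Hermitian matrix $A=(\phi_{i\overline j})$, and by \eqref{n-1} a strictly positive $(n-1,n-1)$ form $\psi$ is encoded by the positive definite Hermitian matrix $B=(\psi^{\overline j i})$. Formula \eqref{11n1n1formula} says exactly that, under these identifications, the assignment $\phi\mapsto \phi^{n-1}/(n-1)!$ corresponds to the map
$$
\Phi\colon A\longmapsto (\det A)\,A^{-1}
$$
on the cone of positive definite Hermitian $n\times n$ matrices. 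So it suffices to prove that $\Phi$ is a bijection of this cone onto itself for $n\geq 2$.

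First I would observe that $\Phi$ maps the cone into itself: if $A$ is Hermitian positive definite then so is $A^{-1}$, and $\det A>0$, hence $(\det A)A^{-1}$ is Hermitian positive definite. Taking determinants gives $\det\Phi(A)=(\det A)^{n-1}$, which points to the candidate inverse
$$
\Psi\colon B\longmapsto (\det B)^{1/(n-1)}\,B^{-1},
$$
well defined on the same cone since $\det B>0$ (this is where $n\geq 2$ enters, so that $1/(n-1)$ makes sense; for $n=1$ the map is constant and the statement is false). One then checks $\Psi\circ\Phi=\mathrm{id}$ and $\Phi\circ\Psi=\mathrm{id}$ in one line each: from $B=(\det A)A^{-1}$ one gets $\det B=(\det A)^{n-1}$, so $(\det B)^{1/(n-1)}=\det A$ and $\Psi(B)=(\det A)\big((\det A)A^{-1}\big)^{-1}=A$; the reverse composition is the same computation with $A$ and $B$ interchanged, using $\det\Psi(B)=(\det B)^{1/(n-1)}$. (As a byproduct both $\Phi$ and $\Psi$ are smooth, so the bijection is in fact a diffeomorphism of the two cones of forms, which will be convenient later.)

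Finally I would translate back to forms: given a strictly positive $(n-1,n-1)$ form $\psi$ with coefficient matrix $B$, let $\phi$ be the $(1,1)$ form whose coefficient matrix is $\Psi(B)$; this is coordinate-independent because the correspondences $\phi\leftrightarrow A$, $\psi\leftrightarrow B$ and the matrix map $\Psi$ are, and \eqref{11n1n1formula} then gives $\phi^{n-1}/(n-1)!=\psi$, while positive definiteness of $\Psi(B)$ shows $\phi$ is strictly positive; injectivity of $\phi\mapsto\phi^{n-1}/(n-1)!$ is just $\Psi\circ\Phi=\mathrm{id}$. I do not expect any genuine obstacle here: the only points needing a little care are bookkeeping the signs in \eqref{n-1} so that \eqref{11n1n1formula} is literally the map $A\mapsto(\det A)A^{-1}$, and flagging where $n\geq 2$ is used; the rest is elementary and purely pointwise.
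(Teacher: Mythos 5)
Your proposal is correct, and it is essentially the argument the paper has in mind: the paper disposes of the lemma by citing Michelsohn's orthonormal-basis proof and remarking that \eqref{n-1} and \eqref{11n1n1formula} give the explicit formulae, and your reduction to the matrix map $A\mapsto(\det A)A^{-1}$ on the positive definite Hermitian cone, with inverse $B\mapsto(\det B)^{1/(n-1)}B^{-1}$, is exactly that explicit version (diagonalizing as in Michelsohn recovers the same inverse formula eigenvalue-by-eigenvalue). Your verification via $\det B=(\det A)^{n-1}$, the two one-line composition checks, and the remarks on $n\geq 2$ and coordinate-independence are all sound.
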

The above bijection can be found in \cite{michelsohn} and proved by orthonormal basis. We can also use \eqref{n-1} and \eqref{11n1n1formula} to give the explicit formulae involved (cf. \cite[Formula (3.3)]{phong}).

For a real $(1,1)$ form $\phi$ defined as in \eqref{11} (no need to be positive), \eqref{starformulacomplex} implies
\begin{align}
\ast\phi
=&(\mn)^{n-1}\sum\limits_{k,\ell=1}^n(-1)^{\frac{n(n-1)}{2}+n+k+\ell+1}(\det\omega)\phi^{\overline{\ell}k}\\
&\md z^1\wedge\cdots\wedge\widehat{\md z^k}\wedge\cdots\wedge \md z^n\wedge\md \overline{z}^1\wedge\cdots\wedge\widehat{\md\overline{z}^{\ell}}\wedge\cdots\wedge\md\overline{z}^n,\nonumber
\end{align}
where $\phi^{\overline{\ell}k}=g^{\overline{\ell}i}\phi_{i\overline{j}}g^{\overline{j}k}$. Hence, if $\xi$ is another real $(1,1)$ form with $\det\xi\neq0$, then we can deduce
\begin{align}\label{astdet}
\frac{\det(\ast\phi)}{\det(\ast\xi)}=\frac{\det\phi}{\det\xi}.
\end{align}
We need the following useful formulae   and the proofs are direct and complicated computation.
\begin{lem}[see for example \cite{tw1310}]\label{lemformula1}
For any real $(1,1)$ form $\chi=\mn\chi_{i\overline{j}}\md z^i\wedge\md \overline{z}^j$, we have
\begin{align}\label{formula1}
\ast(\chi\wedge\omega^{n-2})=(n-2)!\left[(\tr_{\omega}\chi)\omega-\chi\right].
\end{align}
\end{lem}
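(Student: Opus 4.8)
The plan is to reduce \eqref{formula1} to a pointwise computation in a well-chosen unitary frame. Both sides are built only from $\omega$, $\chi$, the wedge product, the trace $\tr_\omega$ and the Hodge star $\ast$ attached to $\omega$, all of which behave naturally under a holomorphic linear change of coordinates; hence it suffices to prove the identity at an arbitrary fixed point $p\in M$. At $p$ we may choose holomorphic coordinates $z=(z^1,\dots,z^n)$ with $g_{i\overline j}(p)=\delta_{ij}$ and, simultaneously, $\chi_{i\overline j}(p)=\lambda_i\delta_{ij}$ for real numbers $\lambda_1,\dots,\lambda_n$ — this is the standard simultaneous diagonalization of the Hermitian form $\chi$ with respect to the positive Hermitian form $g$.

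Next I would set $\beta_i:=\mn\,\md z^i\wedge\md\overline z^i$, so that at $p$ one has $\omega=\sum_{i=1}^n\beta_i$, $\chi=\sum_{i=1}^n\lambda_i\beta_i$ and $\tr_\omega\chi=\sum_{i=1}^n\lambda_i$. The $\beta_i$ are even-degree forms, hence commute, and $\beta_i\wedge\beta_i=0$, so only square-free products survive in the powers of $\omega$. Writing $\beta_S:=\prod_{i\in S}\beta_i$ for $S\subset\{1,\dots,n\}$, this gives $\omega^{n-2}=(n-2)!\sum_{|S|=n-2}\beta_S$ and, multiplying by $\chi$ and collecting the coefficient of each $\beta_T$ with $|T|=n-1$,
\[
\chi\wedge\omega^{n-2}=(n-2)!\sum_{|T|=n-1}\Big(\sum_{j\in T}\lambda_j\Big)\beta_T.
\]
Then, using the conventions \eqref{astdefn}, \eqref{astdefn2} and \eqref{starformulacomplex}, I would record the elementary facts that $\beta_1\wedge\cdots\wedge\beta_n=\omega^n/n!$ is the volume element at $p$ and that $\langle\beta_i,\beta_j\rangle=\delta_{ij}$, whence $\ast\beta_T=\beta_{T^{c}}$ for every subset $T$ (here $T^{c}$ is the complement in $\{1,\dots,n\}$; no sign appears because the $\beta_i$ commute). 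Substituting into the previous display, writing $T^{c}=\{m\}$ and noting $\sum_{j\in T}\lambda_j=\tr_\omega\chi-\lambda_m$, yields
\[
\ast(\chi\wedge\omega^{n-2})=(n-2)!\sum_{m=1}^n\big(\tr_\omega\chi-\lambda_m\big)\beta_m=(n-2)!\big[(\tr_\omega\chi)\,\omega-\chi\big],
\]
which is exactly \eqref{formula1}.

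The computation is largely routine; the points that require genuine care are the sign bookkeeping in the Hodge star relative to the paper's conventions — in particular checking $\ast\beta_T=\beta_{T^{c}}$ with the correct sign and normalization against \eqref{astdefn}--\eqref{starformulacomplex} — and the justification that $\chi$ and $g$ can be simultaneously put in the above normal form by a holomorphic linear change of coordinates at $p$. An alternative, more computational route is to apply \eqref{starformulacomplex} directly to the $(n-1,n-1)$-form $\chi\wedge\omega^{n-2}$ after expanding it via \eqref{n-1} and \eqref{11n1n1formula}, but the orthonormal-frame argument above is shorter and more transparent, and specializing to $\chi=\omega$ or $n=2$ provides an immediate sanity check.
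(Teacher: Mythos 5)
Your proof is correct. Note that the paper itself does not write out a proof: it cites \cite{tw1310} and remarks that the proof is a ``direct and complicated computation'', meaning one expands $\chi\wedge\omega^{n-2}$ and applies the general coordinate formula \eqref{starformulacomplex} for the Hodge star (in the spirit of \eqref{n-1} and \eqref{11n1n1formula}), keeping track of Kronecker-symbol signs throughout. Your route is genuinely different and cleaner: since both sides are tensorial (no derivatives of $\chi$ or $\omega$ enter), you verify the identity pointwise after simultaneously diagonalizing the Hermitian form $\chi$ against $g$, and then the whole computation reduces to the square-free expansion $\omega^{n-2}=(n-2)!\sum_{|S|=n-2}\beta_S$ together with the standard fact $\ast\beta_T=\beta_{T^c}$, which indeed follows from the defining property $\psi\wedge\ast\varphi=\langle\psi,\varphi\rangle\,\omega^n/n!$ because $\beta_S\wedge\beta_{T^c}=\delta_{ST}\,\omega^n/n!$ and the $\beta_S$ are orthonormal in the paper's conventions. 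What the diagonalization buys is brevity and transparency (and the immediate sanity checks $\chi=\omega$, $n=2$); what the brute-force coordinate computation buys is uniformity: it does not rely on choosing a special frame and the same machinery carries over to identities such as Lemma \ref{lemformula2}, where the torsion term $\partial u\wedge\dbar(\omega^{n-2})$ involves derivatives of the metric, so a pointwise diagonalization of a second tensor no longer simplifies matters. The only points in your argument that deserve the explicit check you already flag are the normalization $\langle\beta_i,\beta_j\rangle=\delta_{ij}$ and $\ast 1=\omega^n/n!$ against \eqref{astdefn}--\eqref{starformulacomplex}, and the standard linear-algebra fact that a Hermitian matrix can be unitarily diagonalized relative to the positive form $g$; both are routine, so there is no gap.
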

The Christoffel symbols, torsion and curvature of the Chern connection (see for example \cite{twjdg}) are
\begin{align*}
\Gamma_{ij}^k=&g^{\overline{q}k}\partial_ig_{j\overline{q}},\quad T_{ij}^k=\Gamma_{ij}^k-\Gamma_{j i}^k,\\
 R_{i\overline{j}k}{}^{\ell}=&-\partial_{\overline{j}}\Gamma_{ik}^{\ell},\quad R_{i\overline{j}k\overline{\ell}}=R_{i\overline{j}k}{}^p g_{p\overline{\ell}}.
\end{align*}
Denote
\begin{align*}
T_{ij\overline{\ell}}:=T_{ij}^kg_{k\overline{\ell}}=g^{\overline{q}k}\left(\partial_ig_{j\overline{q}}
-\partial_jg_{i\overline{q}}\right)g_{k\overline{\ell}}=\partial_ig_{j\overline{\ell}}
-\partial_jg_{i\overline{\ell}}.
\end{align*}
For a $(1,0)$ form $a=a_{\ell}\md z^{\ell}$, define covariant derivative $\nabla_ia_{\ell}$ by
\begin{align*}
\nabla_{i}a_{\ell}:=\partial_{i}a_{\ell}-\Gamma_{i\ell}^pa_p.
\end{align*}
Then we can deduce
\begin{align}\label{commutate}
[\nabla_{i},\nabla_{\overline{j}}]a_{\ell}=-R_{i\overline{j}\ell}{}^pa_p,\quad [\nabla_{i},\nabla_{\overline{j}}]a_{\overline{m}}=R_{i\overline{j}}{}^{\overline{q}}{}_{\overline{m}}a_{\overline{q}},
\end{align}
where $R_{i\overline{j}}{}^{\overline{q}}{}_{\overline{m}}=R_{i\overline{j}p}{}^{\ell}g^{\overline{q}p}g_{\ell \overline{m}}$.
For any $u\in C^{\infty}(M,\mathbb{R})$, we have
\begin{align}\label{ricciidentityu}
\nabla_{i}u=\partial_{i}u=:u_i,\quad \nabla_{\overline{j}}u=\partial_{\overline{j}}u=:u_{\overline{j}},\quad \nabla_{\overline{j}}\nabla_{i}u=\partial_{\overline{j}}\partial_{i}u=:u_{i\overline{j}},\quad [\nabla_{i},\nabla_j]u=-T_{ij}^pu_p.
\end{align}
Using \eqref{commutate}, we can get the following commutation formulae:
\begin{align}\label{ricciidentity}
\nabla_{\ell}u_{i\overline{j}}=&\nabla_{\overline{j}}\nabla_{\ell}u_i-R_{\ell\overline{j}i}{}^pu_p,\; \nabla_{\overline{m}}u_{p\overline{j}}=\nabla_{\overline{j}}u_{p\overline{m}}-\overline{T_{mj}^q}u_{p\overline{q}},
\;\nabla_{\ell}u_{i\overline{q}}=\nabla_{i}u_{\ell\overline{q}}-T_{\ell i}^pu_{p\overline{q}},\\
\nabla_{\overline{m}}\nabla_{\ell}u_{i\overline{j}}=&\nabla_{\overline{j}}\nabla_{i}u_{\ell\overline{m}}
+R_{\ell\overline{m}i}{}^pu_{p\overline{j}}-R_{i\overline{j}\ell}{}^pu_{p\overline{m}}-T_{\ell i}^p\nabla_{\overline{j}}u_{p\overline{m}}-\overline{T_{mj}^{q}}\nabla_{i}u_{\ell\overline{q}}-T_{i\ell}^p\overline{T_{mj}^q}u_{p\overline{q}}.
\nonumber
\end{align}
\begin{lem}\label{lemformula2}
For any $u\in C^{\infty}(M,\mathbb{R})$, we have
\begin{align}\label{formula2}
\ast (\partial u\wedge \dbar(\omega^{n-2}))=(n-2)!\left(u_p\overline{T_{\ell q}^{\ell}}-\overline{T_{\ell j}^{\ell}}g^{\overline{j}i}u_i g_{p\overline{q}}-g_{p\overline{\ell}}g^{\overline{\ell_1}k}u_k\overline{T_{\ell_1 q}^{\ell}}\right)\md z^p\wedge\md \overline{z}^q
\end{align}
\end{lem}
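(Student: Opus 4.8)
\emph{Proof proposal.} The identity \eqref{formula2} is pointwise, so the plan is to fix $p\in M$ and work in holomorphic coordinates centred at $p$ with $g_{i\overline{j}}(p)=\delta_{ij}$. One cannot in general make $\partial_k g_{i\overline{j}}(p)$ vanish, and these first derivatives are exactly what assemble the torsion, so they must be retained. Since both sides of \eqref{formula2} are built tensorially from $g$, $g^{-1}$ and $\partial g$ (together with the $u_i$), it suffices to verify the equality at such a $p$ and then restore covariance. One may also assume $n\geq 3$, since for $n=2$ we have $\omega^{n-2}=1$, so the left side vanishes, and a short check shows the right side does too.

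First I would write $\dbar(\omega^{n-2})=(n-2)\,\dbar\omega\wedge\omega^{n-3}$ with $\dbar\omega=\mn\,\partial_{\overline{k}}g_{i\overline{j}}\,\md\overline{z}^{\,k}\wedge\md z^i\wedge\md\overline{z}^{\,j}$; antisymmetrising the two antiholomorphic indices produces the combination $\partial_{\overline{k}}g_{i\overline{j}}-\partial_{\overline{j}}g_{i\overline{k}}=-\overline{T_{jk}^{\ell}}\,g_{i\overline{\ell}}$, which is how the torsion enters. Wedging with $\partial u=u_i\,\md z^i$ gives an explicit $(n-1,n-1)$ form in which the factor $\omega^{n-3}$ (at $p$, where $g$ is the identity) merely fixes which coordinate pairs $\{z^a,\overline{z}^{\,a}\}$ are inserted, so the form is a short sum over the placements of the free indices $p$, $\overline{q}$ and the running torsion index. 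Applying $\ast$ through the explicit formula \eqref{starformulacomplex} — at $p$ one has $\det g=1$ and $g^{\overline{j}i}=\delta_{ij}$, so $\ast$ reduces to matching complementary index sets via the generalised Kronecker symbols — produces a $(1,1)$ form whose components are sums of terms $u_{\bullet}\,\overline{T^{\bullet}_{\bullet\bullet}}$; collecting these and then rewriting the contractions with $g^{\overline{j}i}$ and $g_{p\overline{q}}$ reinstated should yield precisely the three terms of \eqref{formula2}. An alternative organisation, which isolates the essential part, is to use $\mn\,\partial u\wedge\dbar(\omega^{n-2})=\mn\,\dbar\partial u\wedge\omega^{n-2}-\dbar(\mn\,\partial u\wedge\omega^{n-2})$: the first term is handled by Lemma \ref{lemformula1} and contributes $-(n-2)!\bigl((\Delta u)\,\omega-\mn\,u_{i\overline{j}}\,\md z^i\wedge\md\overline{z}^{\,j}\bigr)$ (using $\mn\,\dbar\partial u=-\mn\,\partial\dbar u$ and $\tr_\omega(\mn\,u_{i\overline{j}}\,\md z^i\wedge\md\overline{z}^{\,j})=\Delta u$), while in the difference the second-order part of $\ast\dbar(\mn\,\partial u\wedge\omega^{n-2})$ cancels it, leaving exactly the torsion expression.

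The main obstacle will be entirely the combinatorial and sign bookkeeping in \eqref{starformulacomplex}: keeping straight the generalised Kronecker symbols, the powers of $\mn$, the signs $(-1)^{n(n-1)/2+\cdots}$ and the factorials, and performing the antisymmetrisations that turn $\partial g$ into the trace $T_{\ell q}^{\ell}$ consistently. There is no conceptual difficulty beyond what already appears in the proof of Lemma \ref{lemformula1}; the computation is simply longer because of the extra $(1,0)$ factor $\partial u$ and because $\partial g$ does not vanish at $p$.
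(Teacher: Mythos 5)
The paper itself offers no written proof of Lemma \ref{lemformula2}: it says only that ``the proofs are direct and complicated computation,'' and refers the reader to \cite{tw1310} for the companion formula of Lemma \ref{lemformula1}. Your primary strategy — fix $p$, take coordinates with $g_{i\overline{j}}(p)=\delta_{ij}$ while retaining $\partial g(p)$, expand $\dbar\omega^{n-2}=(n-2)\dbar\omega\wedge\omega^{n-3}$, identify the antisymmetrised combination $\partial_{\overline{k}}g_{i\overline{j}}-\partial_{\overline{j}}g_{i\overline{k}}=-\overline{T_{jk}^{\ell}}g_{i\overline{\ell}}$, and then push everything through \eqref{starformulacomplex} — is exactly the intended ``direct computation,'' so in spirit you are doing the same thing the authors do off-page. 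The $n=2$ sanity check is correct (both sides vanish; the three torsion terms cancel pairwise once you use $T_{ii}^k=0$ and $T_{ij}^k=-T_{ji}^k$), though it is moot for Theorem \ref{mainthm}, which assumes $n\geq 3$.

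One caution about your ``alternative organisation.'' The identity $\mn\,\partial u\wedge\dbar(\omega^{n-2})=\mn\,\dbar\partial u\wedge\omega^{n-2}-\dbar(\mn\,\partial u\wedge\omega^{n-2})$ is just the Leibniz rule rearranged, and while the first term is indeed dispatched by Lemma \ref{lemformula1} as you say, the second term $\ast\dbar(\mn\,\partial u\wedge\omega^{n-2})$ is not any cheaper: using $\partial^{*}=-\ast\dbar\ast$ and $\ast\ast=-\mathrm{id}$ on $(n-1,n-2)$ forms, it equals $\partial^{*}\ast(\mn\,\partial u\wedge\omega^{n-2})$, and computing $\ast$ of a $(1,0)\wedge\omega^{n-2}$ form and then its $\partial^{*}$ is essentially the same bookkeeping with the generalised Kronecker symbols that you were trying to avoid. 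So this route isolates the reason the second-order terms must cancel (the right side of \eqref{formula2} is first order in $u$), but it is a consistency observation rather than a shortcut, and as a proof it is no more complete than the direct one. Since neither of your routes is carried to the end, the proposal remains a strategy sketch — which, to be fair, is all the paper offers as well — but I would not describe the second route as genuinely reducing the work.
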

Note that $\ast$ is a real operator. From Lemma \ref{lemformula2}, we can deduce the formula in \cite{stw1503} as follows.
\begin{cor}
For any $u\in C^{\infty}(M,\mathbb{R})$, we have
\begin{align}\label{zuintro}
Z(u):=\frac{1}{(n-1)!}\ast \left[\mathrm{Re}\left(\mn\partial u\wedge \dbar(\omega^{n-2})\right)\right]=\mn Z_{p\overline{q}}\md z^p\wedge\md \overline{z}^q,
\end{align}
where
\begin{align}\label{zupq}
Z_{p\overline{q}}=&\frac{1}{2(n-1)}\left[u_p\overline{T_{\ell q}^{\ell}}-\overline{T_{\ell j}^{\ell}}g^{\overline{j}i}u_i g_{p\overline{q}}-g_{p\overline{\ell}}g^{\overline{\ell_1}k}u_k\overline{T_{\ell_1 q}^{\ell}}\right.\\
&\quad\quad\quad\quad\left.+ u_{\overline{q}} T_{\ell p}^{\ell} -T_{\ell j}^{\ell}g^{\overline{i}j}u_{\overline{i}} g_{p\overline{q}}-g_{\ell\overline{q}}g^{\overline{k}\ell_1}u_{\overline{k}} T_{\ell_1 p}^{\ell}  \right].\nonumber
\end{align}
We can also write
$$
Z_{p\overline{q}}=Z^{i}_{p\overline{q}}u_i+\overline{ Z^{i}_{q\overline{p}}}u_{\overline{i}},
$$
where
\begin{align}\label{zipq}
Z^{i}_{p\overline{q}}=\frac{1}{2(n-1)}\left(\delta_{p}^i\overline{T_{\ell q}^{\ell}}-\overline{T_{\ell j}^{\ell}}g^{\overline{j}i}  g_{p\overline{q}}-g_{p\overline{\ell}}g^{\overline{k}i} \overline{T_{k q}^{\ell}}\right).
\end{align}
\end{cor}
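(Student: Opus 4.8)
The plan is to read the Corollary off Lemma \ref{lemformula2} by taking real parts and rescaling; the argument is mechanical, and I do not expect any conceptual obstacle beyond careful bookkeeping of conjugations and index relabellings. First I would use that $\ast$ is a real operator, hence commutes with $\mathrm{Re}$:
$$Z(u)=\frac{1}{(n-1)!}\ast\left[\mathrm{Re}\left(\mn\partial u\wedge\dbar(\omega^{n-2})\right)\right]=\frac{1}{(n-1)!}\,\mathrm{Re}\left[\mn\ast\left(\partial u\wedge\dbar(\omega^{n-2})\right)\right].$$
Since $\mn$ is a scalar, \eqref{formula2} gives
$$\beta:=\mn\ast\left(\partial u\wedge\dbar(\omega^{n-2})\right)=\mn(n-2)!\,A_{p\overline{q}}\,\md z^p\wedge\md\overline{z}^q,$$
where $A_{p\overline{q}}:=u_p\overline{T_{\ell q}^{\ell}}-\overline{T_{\ell j}^{\ell}}g^{\overline{j}i}u_ig_{p\overline{q}}-g_{p\overline{\ell}}g^{\overline{\ell_1}k}u_k\overline{T_{\ell_1 q}^{\ell}}$ abbreviates the bracket appearing in \eqref{formula2}.

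Next I would take the real part. Conjugating $\beta$, the sign from $\overline{\mn}=-\mn$ cancels the sign from $\md\overline{z}^p\wedge\md z^q=-\md z^q\wedge\md\overline{z}^p$, so after relabelling $p\leftrightarrow q$ one gets $\overline{\beta}=\mn(n-2)!\,\overline{A_{q\overline{p}}}\,\md z^p\wedge\md\overline{z}^q$, whence
$$\mathrm{Re}(\beta)=\frac{\beta+\overline{\beta}}{2}=\frac{\mn(n-2)!}{2}\left(A_{p\overline{q}}+\overline{A_{q\overline{p}}}\right)\md z^p\wedge\md\overline{z}^q.$$
Dividing by $(n-1)!$ and using $(n-2)!/(n-1)!=1/(n-1)$ yields $Z(u)=\mn Z_{p\overline{q}}\md z^p\wedge\md\overline{z}^q$ with $Z_{p\overline{q}}=\frac{1}{2(n-1)}(A_{p\overline{q}}+\overline{A_{q\overline{p}}})$. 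Expanding $\overline{A_{q\overline{p}}}$ with $\overline{g^{\overline{j}i}}=g^{\overline{i}j}$, $\overline{g_{q\overline{p}}}=g_{p\overline{q}}$, $\overline{g_{q\overline{\ell}}}=g_{\ell\overline{q}}$ (reality of the metric) and $\overline{u_i}=u_{\overline{i}}$ (reality of $u$) turns its three terms into the three terms on the second line of \eqref{zupq}, which together with $A_{p\overline{q}}$ gives the asserted formula; in particular $\overline{Z_{p\overline{q}}}=Z_{q\overline{p}}$, so $Z(u)$ is indeed a real $(1,1)$ form.

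For the alternative expression I would collect in $A_{p\overline{q}}$ the coefficient of $u_i$, relabelling the dummy index $k\mapsto i$ in the last term, to obtain $A_{p\overline{q}}=2(n-1)Z^i_{p\overline{q}}u_i$ with $Z^i_{p\overline{q}}$ exactly as in \eqref{zipq}; conjugating gives $\overline{A_{q\overline{p}}}=2(n-1)\overline{Z^i_{q\overline{p}}}\,u_{\overline{i}}$, and substituting both into $Z_{p\overline{q}}=\frac{1}{2(n-1)}(A_{p\overline{q}}+\overline{A_{q\overline{p}}})$ produces $Z_{p\overline{q}}=Z^i_{p\overline{q}}u_i+\overline{Z^i_{q\overline{p}}}u_{\overline{i}}$. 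The only point requiring genuine care — the "hard part", such as it is — is this conjugation/relabelling bookkeeping, in particular the index swap $p\leftrightarrow q$ accompanying conjugation of a $(1,1)$ form and the cancellation of the two sign changes noted above; all the real content sits in Lemma \ref{lemformula2}.
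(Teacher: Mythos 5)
Your proposal is correct and is exactly the route the paper intends: the paper states only ``Note that $\ast$ is a real operator. From Lemma \ref{lemformula2}, we can deduce\dots'' and leaves the bookkeeping implicit, and your computation (commute $\ast$ with $\mathrm{Re}$ and with $\mn$, apply \eqref{formula2}, conjugate with the index swap $p\leftrightarrow q$, and collect the coefficient of $u_i$) fills in precisely those details correctly.
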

Note that $Z(u)$ is linear in $\nabla u$. The following useful lemma is simple and we will use it without  pointing it out again and again (cf.\cite{gill1410}).
\begin{lem}\label{lemzu}
For any $f\in C^{\infty}(M,\mathbb{R})$, at the point  where $\ddbar f\leq(\geq)0$, we have
\begin{align*}
(\Delta_g f)\omega-\ddbar f\leq(\geq)0,
\end{align*}
where $\Delta_g f=g^{\overline{j}i}\partial_i\partial_{\overline{j}}f$. At the point where $\nabla f=0$, we have $Z(f)=0$.
\end{lem}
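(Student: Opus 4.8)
The plan is to reduce both assertions to elementary pointwise linear algebra, with no analysis required. For the first claim, fix a point $x_0\in M$ at which $\ddbar f\leq 0$. Since $f$ is a scalar function there is no torsion correction, so $\ddbar f=\mn f_{i\overline j}\,\md z^i\wedge\md\overline z^j$ with $f_{i\overline j}=\partial_i\partial_{\overline j}f$, and the hypothesis says precisely that the Hermitian matrix $(f_{i\overline j}(x_0))$ is negative semi-definite. Choose holomorphic coordinates centred at $x_0$ in which $g_{i\overline j}(x_0)=\delta_{ij}$ and $(f_{i\overline j}(x_0))=\mathrm{diag}(\mu_1,\dots,\mu_n)$ with all $\mu_k\leq 0$. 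Then $\Delta_g f(x_0)=\sum_k\mu_k$, and the real $(1,1)$ form $(\Delta_g f)\omega-\ddbar f$ is represented at $x_0$ by the diagonal matrix whose $i$-th entry equals $\sum_k\mu_k-\mu_i=\sum_{k\neq i}\mu_k\leq 0$; hence $(\Delta_g f)\omega-\ddbar f\leq 0$ at $x_0$. The case $\ddbar f\geq 0$ is identical with every inequality reversed. Alternatively, by Lemma~\ref{lemformula1} one has $(\Delta_g f)\omega-\ddbar f=\frac{1}{(n-2)!}\ast(\ddbar f\wedge\omega^{n-2})$, and since $-\ddbar f\geq 0$ and $\omega^{n-2}$ are both strongly positive their product is a positive $(n-1,n-1)$ form, on which $\ast$ acts to produce a positive $(1,1)$ form.

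For the second claim, recall from \eqref{zuintro}--\eqref{zipq} that $Z(f)=\mn Z_{p\overline q}\,\md z^p\wedge\md\overline z^q$ with $Z_{p\overline q}=Z^{i}_{p\overline q}f_i+\overline{Z^{i}_{q\overline p}}f_{\overline i}$, where the coefficients $Z^{i}_{p\overline q}$ in \eqref{zipq} depend only on the background metric $g$ and its Chern torsion and not on $f$. Thus $Z(f)$ is linear in $\nabla f=(f_i,f_{\overline j})$ with smooth $f$-independent coefficients, so at any point where $\nabla f=0$, meaning $f_i=f_{\overline j}=0$ there for all $i,j$, every term of $Z_{p\overline q}$ vanishes and hence $Z(f)=0$ at that point.

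Both parts being immediate, there is no real obstacle; the only two points worth a line of justification are that $\ddbar$ of a scalar function carries no first-order torsion term, so that ``$\ddbar f\leq 0$'' is literally the statement that $(\partial_i\partial_{\overline j}f)$ is negative semi-definite, and that the algebraic map $\chi\mapsto(\tr_\omega\chi)\omega-\chi$ preserves the sign of a semi-definite Hermitian matrix, which holds because when all the eigenvalues $\lambda_i$ of $\chi$ share a common sign, so does each partial sum $\sum_{k\neq i}\lambda_k$.
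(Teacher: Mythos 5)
Your argument is correct and is precisely the elementary argument the paper has in mind: it states the lemma without proof, calling it ``simple'' and noting immediately beforehand that $Z(u)$ is linear in $\nabla u$, which is exactly your justification of the second assertion. For the first assertion, your simultaneous diagonalization of $g$ and $\ddbar f$ at a point, together with the observation that the eigenvalues of $(\Delta_g f)\omega-\ddbar f$ are the partial sums $\sum_{k\neq i}\mu_k$ and hence share the common sign of the $\mu_k$, is the standard and intended proof; the alternative route via Lemma~\ref{lemformula1} and positivity of $\ast$ is also valid but adds nothing.
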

To end this section, we introduce some terminology concerning cohomology classes of $(n-1,n-1)$ forms. Define the \emph{Aeppli cohomology group} (see \cite{tw1310})
$$
H_A^{n-1,n-1}(M,\mathbb{R}):=\frac{\{\partial\dbar{\textrm-}\text{closed real }(n-1,n-1)\text{ forms}\}}{\{\partial\gamma+\overline{\partial \gamma}:\;\gamma \,\in\,\Lambda^{n-2,n-1}(M)\}}.
$$
This space is naturally in duality with the finite dimensional \emph{Bott-Chern cohomology group} with the nondegenerated pairing
$$
H_A^{n-1,n-1}(M,\mathbb{R})\otimes H_{\mathrm{BC}}^{1,1}(M,\mathbb{R})\longrightarrow \mathbb{R}
$$
given by wedge product and integration over on $M$ (see \cite{angella}), where
$$H^{1,1}_{\mathrm{BC}}(M,\mathbb{R})=\frac{\{\md{\textrm-}\text{closed real } (1,1)\text{ forms}\}}{\{\ddbar\psi: \; \psi\in C^\infty(M,\mathbb{R})\}}.$$
For any $u\in C^{\infty}(M,\mathbb{R})$, define
$$
\gamma:=\frac{\mn}{2}\dbar u\wedge\chi^{n-2},
$$
where $\chi$ is a real $(1,1)$ form. Then we have
\begin{align}\label{betau}
\beta_{u}:=\partial\gamma+\overline{\partial \gamma}=\ddbar u\wedge \chi^{n-2}+\mathrm{Re}\left(\mn\partial u\wedge\dbar(\chi^{n-2})\right).
\end{align}
$\beta_u$ is $\partial\dbar$\text{-} closed. Indeed, it is the $(n-1,n-1)$ part of the $\md$-exact $(2n-2)$ form $\md \left(\md^cu\wedge\chi^{n-2}\right)$, where $$\md^c=\frac{\mn}{2}(\dbar-\partial)$$ with $\md\md^c=\ddbar$.
Let $\alpha$ and $\alpha'$ be Hermitian metrics on $M$. Then
\begin{align*}
\ddbar \left(\log\frac{\alpha^n}{\alpha'^n}\right)\wedge \chi^{n-2}+\mathrm{Re}\left[\mn\partial\left(\log\frac{\alpha^n}{\alpha'^n}\right)\wedge\dbar(\chi^{n-2})\right],
\end{align*}
is well-defined  $\partial\dbar$-closed since
$$\log\frac{\alpha^n}{\alpha'^n}\in C^{\infty}(M,\mathbb{R}).$$

\section{Preliminary estimates}\label{secpreliminary}
Define a linear operator
\begin{equation}
\label{Ldefn}
L(\varphi):=\Theta^{\overline{j}i}\partial_i\partial_{\overline{j}}\varphi+\tilde g^{\overline{j}i}Z(\varphi)_{i\overline{j}}
=\Theta^{\overline{j}i}\partial_i\partial_{\overline{j}}\varphi+\tr_{\tilde\omega}Z(\varphi)
\end{equation}
with
\begin{align*}
\Theta^{\overline{j}i}=\frac{1}{n-1}\left(\left(\tr_{\tilde \omega}\alpha\right)\alpha^{\overline{j}i}-\tilde g^{\overline{j}i}\right)>0.
\end{align*}
Obviously, $L$ is a second order elliptic operator. Noting that $L$ is the linearized operator of \eqref{paragau}, standard parabolic theory implies that there exists a smooth solution $u$ to \eqref{paragau} on $[0,T)$, where $[0,\,T)$ is the maximal time interval with $T\in (0,\,\infty]$. We will prove $T=\infty$. First, we give a preliminary estimate as follows.
\begin{lem}\label{lempreliminary}
Let   $u$ be the solution to \eqref{paragau} on $M\times[0,\,T)$. Then there exists a uniform constant $C$, i.e., depending only on the initial data on $M$, such that
\begin{align}
\sup\limits_{M\times[0,\,T)}\left|\frac{\partial u}{\partial t}(x,t)\right|\leq C.
\end{align}
\end{lem}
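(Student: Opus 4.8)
The plan is to bound $\partial u/\partial t$ by applying the maximum principle to a suitable quantity involving $\dot u := \partial u/\partial t$. First I would differentiate the evolution equation \eqref{paragau} in time. Writing $\tilde\omega = \varpi + \frac{1}{n-1}[(\Delta u)\alpha - \ddbar u] + Z(u)$ and noting that $Z$ is linear in $\nabla u$, the time derivative of the right-hand side of \eqref{paragau} is $\tilde g^{\overline j i}\partial_t\tilde g_{i\overline j}$, and a direct computation using \eqref{formula1} (i.e. the identity $\frac{1}{n-1}[(\Delta \varphi)\alpha - \ddbar\varphi]$ is exactly the piece coming from $\ast(\ddbar\varphi\wedge\alpha^{n-2})$) shows that this equals precisely $L(\dot u)$, where $L$ is the linearized operator \eqref{Ldefn}; the coefficient $\Theta^{\overline j i} = \frac{1}{n-1}((\tr_{\tilde\omega}\alpha)\alpha^{\overline j i} - \tilde g^{\overline j i})$ is positive by the discussion there, so $L$ is uniformly elliptic along the flow as long as $\tilde\omega > 0$. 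Hence $\dot u$ satisfies the linear parabolic equation
$$
\ppt \dot u = L(\dot u)
$$
on $M\times[0,T)$, since $\psi$ is time-independent.

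Next I would invoke the maximum principle for this scalar parabolic equation on the compact manifold $M$. Because $L$ has no zeroth-order term, the function $\sup_M \dot u(\cdot,t)$ is non-increasing in $t$ and $\inf_M \dot u(\cdot,t)$ is non-decreasing in $t$; more precisely, at a spatial maximum point of $\dot u$ we have $\partial_i\partial_{\overline j}\dot u \le 0$ (so $\Theta^{\overline j i}\partial_i\partial_{\overline j}\dot u \le 0$) and $\nabla \dot u = 0$, whence $Z(\dot u) = 0$ at that point by Lemma \ref{lemzu}, so $\ppt \dot u \le 0$ there; symmetrically at a minimum point $\ppt \dot u \ge 0$. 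Therefore
$$
\sup_{M\times[0,T)} \left|\dot u(x,t)\right| \le \sup_M \left|\dot u(\cdot,0)\right|,
$$
and the right-hand side is a uniform constant $C$ determined by $\alpha_0$, $\alpha$, $\psi$ and $u_0$: indeed $\dot u(\cdot,0) = \log\frac{(\varpi + \frac{1}{n-1}[(\Delta u_0)\alpha - \ddbar u_0] + Z(u_0))^n}{\alpha^n} - \psi$ depends only on the initial data.

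The only subtle point — and the one I expect to be the main (though modest) obstacle — is to verify carefully that $\partial_t(\text{RHS of }\eqref{paragau})$ is exactly $L(\dot u)$ with the operator $L$ from \eqref{Ldefn}, i.e. that differentiating $\log\det\tilde g$ produces the trace against $\tilde g^{\overline j i}$ of the linear-in-$\dot u$ tensor $\frac{1}{n-1}[(\Delta \dot u)\alpha_{i\overline j} - \dot u_{i\overline j}] + Z(\dot u)_{i\overline j}$, and that $\tilde g^{\overline j i}\cdot \frac{1}{n-1}[(\Delta\dot u)\alpha_{i\overline j} - \dot u_{i\overline j}]$ collapses to $\Theta^{\overline j i}\partial_i\partial_{\overline j}\dot u$. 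This is the algebraic identity $\tilde g^{\overline j i}[(\tr_\alpha(\ddbar\dot u))\alpha_{i\overline j} - \dot u_{i\overline j}] = [(\tr_{\tilde\omega}\alpha)\alpha^{\overline j i} - \tilde g^{\overline j i}]\,\partial_i\partial_{\overline j}\dot u$, which follows from $\tilde g^{\overline j i}\alpha_{i\overline j} = \tr_{\tilde\omega}\alpha$ and relabeling; no curvature or torsion terms appear because we only differentiate in $t$. With this identity in hand the argument is a clean application of the parabolic maximum principle, requiring no gradient or higher-order estimates.
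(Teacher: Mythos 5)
Your proposal is correct and follows essentially the same route as the paper: differentiate \eqref{paragau} in time to obtain $\partial_t\dot u = L(\dot u)$ and apply the parabolic maximum principle to conclude $|\dot u| \le |\dot u(\cdot,0)|$, which is bounded by the initial data. The paper states this more tersely, while you spell out the verification that the linearization is exactly $L$ and that the first-order term $\tr_{\tilde\omega}Z(\dot u)$ vanishes at spatial extrema of $\dot u$; both points are correct and consistent with what the paper implicitly uses.
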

\begin{proof}
From \eqref{paragau}, we get the evolution equation for $\dot{u}:=\ppt u$
\begin{equation}\label{ttu}
\ppt\dot{u}=L(\dot{u}).
\end{equation}
By the maximum principle, we get
\begin{align*}
\sup\limits_{M\times[0,\,T)}\left|\frac{\partial u}{\partial t}(x,t)\right|
\leq&\sup_M\left|\frac{\partial u}{\partial t}(x,0)\right|\\
\leq&\left\|\log\frac{\left(\varpi+\frac{1}{n-1}\left[(\Delta u_0)\alpha-\ddbar u_0\right]+Z(u_0)\right)^n}{\alpha^n}\right\|_{L^{\infty}(M)}+\|\psi\|_{L^{\infty}(M)},
\end{align*}
as required.
\end{proof}
Next, using Lemma \ref{lempreliminary}, we can get the estimate of $\tilde u$.
\begin{prop}
Let   $u$ be the solution to \eqref{paragau} on $M\times[0,\,T)$. Then there exists a uniform constant $C$
such that
\begin{align}
\sup\limits_{M\times[0,\,T)}\left|\tilde u(x,t)\right|\leq C.
\end{align}
\end{prop}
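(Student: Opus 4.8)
The goal is to bound $\sup_{M\times[0,T)}|\tilde u|$ by a uniform constant. The plan is to combine the already-established bound $|\dot u|\le C$ from Lemma~\ref{lempreliminary} with an integral/Moser iteration argument applied to the normalization $\tilde u$, exactly in the spirit of the elliptic $C^0$ estimate in \cite{tw1305,tw1310,stw1503}. First I would observe that $\tilde\omega$ in \eqref{tildeomega} is a genuine Hermitian metric for all $t$, and that $\tilde u$ differs from $u$ only by a time-dependent constant, so that $\varpi+\frac{1}{n-1}[(\Delta\tilde u)\alpha-\ddbar\tilde u]+Z(\tilde u)=\tilde\omega>0$ as well (the operator on the left is affine in $u$ with constant term $\varpi$, and constants are killed by $\ddbar$, $\Delta$, and $Z$). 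Rewriting \eqref{paragau} and using $|\dot u|\le C$, one gets that at each fixed time $\tilde u$ satisfies a Monge--Amp\`ere type inequality: $\det\tilde g$ is pinched between two positive multiples of $\det\alpha$, uniformly in $t$.

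Next I would run the integral estimate. The standard device is to test the equation against $|\tilde u|^{p-1}\tilde u$ (or $e^{-\lambda\tilde u}$) and integrate by parts using the structure of $\tilde\omega$; the key algebraic input is Lemma~\ref{lemformula1} (the identity $\ast(\chi\wedge\omega^{n-2})=(n-2)![(\tr_\omega\chi)\omega-\chi]$) together with the fact, recorded after \eqref{betau}, that $\ddbar\tilde u\wedge\alpha^{n-2}+\mathrm{Re}(\mn\partial\tilde u\wedge\dbar(\alpha^{n-2}))$ is the $(n-1,n-1)$-part of a $\md$-exact form, so its integral against any $\partial\dbar$-closed $(1,1)$ form vanishes. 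This gives control of $\int_M|\nabla\tilde u|^2 |\tilde u|^{p-2}\alpha^n$ in terms of lower-order quantities, and then a Sobolev inequality on the compact manifold $(M,\alpha)$ feeds a Moser iteration that bounds $\|\tilde u\|_{L^\infty}$ by $\|\tilde u\|_{L^1}$ plus a uniform constant coming from the $C^0$ bound on $\det\tilde g/\det\alpha$.

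Finally, to close the argument I would bound $\|\tilde u\|_{L^1(M,\alpha^n)}$ uniformly. Here the normalization is exactly what is needed: $\int_M\tilde u\,\alpha^n=0$, so by a Poincar\'e-type inequality $\|\tilde u\|_{L^1}$ is controlled by $\|\nabla\tilde u\|_{L^1}$, which in turn is controlled by the already-obtained $L^2$ gradient bound; alternatively one uses the Green's function of $(M,\alpha)$ against the equation, as in \cite{stw1503}. Combining, $\|\tilde u\|_{L^\infty}\le C$ uniformly on $[0,T)$.

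I expect the main obstacle to be the integration-by-parts step: because $\alpha$ is only Gauduchon (not K\"ahler, not even balanced), the torsion terms encoded in $Z(\tilde u)$ and in $\dbar(\alpha^{n-2})$ produce extra first-order terms that do not obviously have a favorable sign. Handling them requires using the precise form of $Z$ in \eqref{zupq}--\eqref{zipq}, absorbing the bad terms via Cauchy--Schwarz into the good second-order term at the cost of enlarging constants, and crucially exploiting that the problematic divergence-type piece integrates to zero by the Gauduchon condition $\partial\dbar\alpha^{n-1}=0$ (equivalently, that $\beta_{\tilde u}$ pairs trivially in Aeppli cohomology). This is precisely the mechanism that makes the elliptic estimate of Sz\'ekelyhidi--Tosatti--Weinkove work, and it transplants to the parabolic setting because the only new ingredient, $\dot u$, is already uniformly bounded.
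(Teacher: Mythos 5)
Your underlying idea matches the paper's: freeze time, observe that $\dot u$ is uniformly bounded by Lemma~\ref{lempreliminary}, so that at each time slice $\tilde u$ (equivalently $u$) solves the elliptic Monge--Amp\`ere type equation \eqref{stw1503equ} with uniformly bounded right-hand side $\psi+\dot u$, and then use the normalization $\int_M\tilde u\,\alpha^n=0$ to convert an oscillation bound into a sup bound. That is exactly the structure of the paper's argument.

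Where you differ is in how you obtain the oscillation bound. The paper simply cites \cite[Theorem 1.6]{tw1310} (see also \cite[Remark 12]{sz}), which already states precisely the required estimate $\sup_M|u(x,t)-u(y,t)|\le C'$ with $C'$ depending only on the background data and on $\sup_M|\psi+\dot u|$. You instead propose to re-derive this elliptic $C^0$ estimate from scratch via a Moser iteration, tracking the torsion contributions through $Z(\tilde u)$ and $\dbar(\alpha^{n-2})$ and using the Aeppli-class/Gauduchon structure to kill the exact piece under integration. This is correct in spirit --- you are essentially sketching the proof of the theorem the paper cites --- but it is considerably more laborious, and the details are genuinely delicate: the integration-by-parts for $(n-1)$-plurisubharmonic-type quantities produces first-order terms that cannot simply be absorbed by Cauchy--Schwarz without the specific cancellations exploited in \cite{tw1310,stw1503}. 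In short, the paper buys brevity by invoking a prior elliptic result uniformly in $t$; your route is self-contained but would need the full machinery of \cite{tw1310} to be rigorous, and offers no real gain here. It would be cleaner to cite the elliptic $C^0$ estimate directly, as the paper does, and reserve the effort for the genuinely new parabolic estimates.
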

\begin{proof}
We can rewrite \eqref{paragau} as
\begin{equation}
\log\frac{\left(\varpi+\frac{1}{n-1}\left[(\Delta u)\alpha-\ddbar u\right]+Z(u)\right)^n}{\alpha^n}=\psi+\dot u.
\end{equation}
Then by \cite[Theorem 1.6]{tw1310} (see also \cite[Remark 12]{sz}), there exists a constant $C'$ depending only on the initial data on $M$ and $\sup_M|\psi+\dot u|$, such that
\begin{equation}
\label{reftw1310thm1.6}
\sup_M\left|u(x,t)-u(y,t)\right|\leq C'
\end{equation}
By Lemma \ref{lempreliminary}, if follows that $\sup_M|\psi+\dot u|$ is uniformly bounded, and hence $C'$ in\eqref{reftw1310thm1.6} is a  uniform constant. Since we have $\int_M\tilde u\alpha^n=0$ by definition, there exists a point $(y,t)$ such that $\tilde u(y,t)=0$ and hence for any $(x,t)\in M\times[0,T)$, we get
\begin{equation}
\left|\tilde u(x,t)\right|=\left|\tilde u(x,t)-\tilde u(y,t)\right|=\left| u(x,t)- u(y,t)\right|\leq C,
\end{equation}
as required.
\end{proof}
\section{Second order estimate}\label{secsecond}
We can use the ideas from \cite{stw1503} in the elliptic setting to prove the second estimate.
\begin{thm}\label{2ndestimate}
Let   $u$ be the solution to \eqref{paragau} on $M\times[0,\,T)$. Then there exists a uniform $C>0$ such that
\begin{align}\label{formula2ndestimate}
\sup\limits_{M\times[0,\,T)}|\ddbar u|_{\alpha}\leq CK,
\end{align}
where $$K=1+\sup\limits_{M\times[0,\,T)}|\nabla u|_{\alpha}^2.$$
\end{thm}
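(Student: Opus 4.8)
My plan is to prove \eqref{formula2ndestimate} by the maximum principle, adapting to the flow \eqref{paragau} the elliptic $C^2$ estimate of Sz\'ekelyhidi--Tosatti--Weinkove for \eqref{stw1503equ} and using the bound $\|\dot u\|_{L^\infty}\le C$ of Lemma \ref{lempreliminary} (and $\|\tilde u\|_{L^\infty}\le C$) to tame the parabolic terms. Since $\tilde\omega>0$, since $\log(\det\tilde g/\det\alpha)=\psi+\dot u$ is uniformly bounded, and since $\ddbar u=\ddbar\tilde u$ and $\nabla u=\nabla\tilde u$, it suffices to bound from above the largest eigenvalue $\lambda_1$ of $\ddbar u$ with respect to $\alpha$: the positivity of $\tilde\omega$ then gives $\sum_{k\ge 2}\lambda_k\ge -CK$, hence a lower bound for the smallest eigenvalue in terms of $K$ and $\sup\lambda_1$. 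I fix $T'\in(0,T)$ and, on $M\times[0,T']$, consider
\begin{equation*}
Q:=\log\lambda_1+\varphi\big(|\nabla\tilde u|_\alpha^2\big)+\psi(\tilde u),
\end{equation*}
where $\varphi(s)=-\tfrac12\log\big(1-\tfrac{s}{2K}\big)$ (so $\varphi'\in[\tfrac1{4K},\tfrac1{2K}]$ and $\varphi''=2(\varphi')^2$) and $\psi$ is a decreasing, suitably convex function on the bounded range of $\tilde u$, both chosen exactly as in \cite{stw1503}; working with $\tilde u$ is harmless because $\nabla u=\nabla\tilde u$, $\ddbar u=\ddbar\tilde u$, and $\tilde u$ stays bounded. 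If $Q$ attains its maximum at $t=0$ we are done by the hypothesis on the initial data; otherwise it is attained at some $(x_0,t_0)$ with $t_0>0$, where, after the standard argument rendering $\lambda_1$ smooth near $x_0$ and choosing holomorphic coordinates with $\alpha_{i\overline j}(x_0)=\delta_{ij}$, $\de_k\alpha_{i\overline j}(x_0)=0$ and $(u_{i\overline j})(x_0)$ diagonal with $\lambda_1\ge\cdots\ge\lambda_n$, we have $\ppt Q\ge 0$ and, since $\nabla Q(x_0)=0$ (so $Z(Q)=0$ by Lemma \ref{lemzu}) and the complex Hessian of $Q$ is $\le 0$ there, also $L(Q)\le 0$; thus $(\ppt-L)Q\ge 0$. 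I will derive a contradiction once $\lambda_1(x_0,t_0)\ge CK$ with $C$ large.

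To this end I differentiate the flow, written $\dot u=\log\det\tilde g-\log\det\alpha-\psi$, once and twice in the $z^1$-direction at $x_0$, commuting derivatives with \eqref{ricciidentity}. Differentiating twice yields an expression for $\de_1\de_{\overline 1}\dot u=\ppt u_{1\overline 1}$ in terms of $\tilde g$ and $u$, supplying the negative concave term $-\tilde g^{\overline jp}\tilde g^{\overline qi}\de_1\tilde g_{i\overline j}\de_{\overline 1}\tilde g_{p\overline q}$, the linearization $L(u_{1\overline 1})$, and error terms linear in the curvature and torsion of $\alpha$ and in $\nabla u$, $\nabla^2u$, $\nabla^3u$; combining with the second variation of $\lambda_1$ controls $(\ppt-L)\log\lambda_1$. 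The term $\varphi'(\ppt-L)|\nabla\tilde u|_\alpha^2$ gives the good negative quantity $-\varphi'\sum_{i,k}\Theta^{\overline kk}\big(|u_{ik}|^2+|u_{i\overline k}|^2\big)$ plus errors of size $\varphi'\cdot K\sum_i\Theta^{\overline ii}=O(1)\sum_i\Theta^{\overline ii}$, using $|\nabla\tilde u|^2\le K$. For $\psi$ I use the identity $L(u)=n-\tr_{\tilde\omega}\varpi$, immediate from $\tr_{\tilde\omega}\tilde\omega=n$ and the definitions \eqref{Ldefn}, \eqref{tildeomega}, \eqref{zu}; together with Lemma \ref{lempreliminary} this gives $(\ppt-L)\tilde u\ge\tr_{\tilde\omega}\varpi-C\ge c_0\sum_i\Theta^{\overline ii}-C$ (as $\tr_{\tilde\omega}\varpi\ge c_0\tr_{\tilde\omega}\alpha=c_0\sum_i\Theta^{\overline ii}$ for the fixed positive form $\varpi$), hence $(\ppt-L)\psi(\tilde u)\le -c_0|\psi'|\sum_i\Theta^{\overline ii}+C|\psi'|-\psi''\,\Theta^{\overline ji}\de_i\tilde u\,\de_{\overline j}\tilde u$; fixing $|\psi'|$ large makes $-c_0|\psi'|\sum_i\Theta^{\overline ii}$ absorb all the $O(1)\sum_i\Theta^{\overline ii}$-errors. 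I note that the only genuinely parabolic quantities, $\ppt u_{1\overline 1}$, $\ppt|\nabla\tilde u|^2$ and $\ppt\tilde u$, are converted by the equation and Lemma \ref{lempreliminary} into precisely the terms handled in the elliptic argument, so the adaptation is mechanical.

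The crux, exactly as in \cite{stw1503}, is to dispose of the positive term $\tfrac1{\lambda_1^2}\Theta^{\overline ji}\de_i\lambda_1\de_{\overline j}\lambda_1$ coming from $\log\lambda_1$, and of the third-order error terms in $(\ppt-L)\log\lambda_1$ that the non-K\"ahler torsion of $\alpha$ produces (both from $\ddbar$ hitting the torsion-dependent $Z(u)$ and from the commutators \eqref{ricciidentity}), which the concave term alone does not control; this is the main obstacle. I would reproduce the argument of \cite{stw1503}: substitute the critical equation $\de_i\lambda_1=-\lambda_1\big(\varphi'\de_i|\nabla\tilde u|^2+\psi'\de_i\tilde u\big)$; the resulting $(\varphi')^2\big|\de|\nabla\tilde u|^2\big|^2_\Theta$ is beaten by $-\varphi''\big|\de|\nabla\tilde u|^2\big|^2_\Theta=-2(\varphi')^2\big|\de|\nabla\tilde u|^2\big|^2_\Theta$, the $(\psi')^2\,\Theta^{\overline ji}\de_i\tilde u\de_{\overline j}\tilde u$ is beaten by $-\psi''\,\Theta^{\overline ji}\de_i\tilde u\de_{\overline j}\tilde u$ after fixing the parameters of $\psi$, and the remaining torsion-third-order terms, which carry a factor $\lambda_1^{-1}$ in front of an $O(K)\sum_i\Theta^{\overline ii}$ bound, are negligible once $\lambda_1\ge CK$. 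Collecting everything gives $0\le(\ppt-L)Q\le -c\sum_i\Theta^{\overline ii}<0$ at $(x_0,t_0)$ (as $\Theta>0$), the desired contradiction; hence $\lambda_1\le CK$ on $M\times[0,T']$ with $C$ independent of $T'$, and letting $T'\uparrow T$ and invoking $\tilde\omega>0$ to bound the other eigenvalues yields \eqref{formula2ndestimate}.
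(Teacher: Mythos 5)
Your overall shape matches the paper's: a maximum-principle argument with the test quantity $\log\lambda_1 + \phi(|\nabla u|^2_\alpha) + \varphi(u)$ (with the same $\phi$ and with $\varphi(s)=D_1e^{-D_2s}$), the perturbation to make $\lambda_1$ smooth, the second-order concavity term from \eqref{f2nddaoshu}, and the use of Lemma \ref{lempreliminary} to tame the parabolic terms. Your identity $L(u)=n-\tr_{\tilde\omega}\varpi$ is correct and is exactly what underlies the estimate \eqref{parta1} in the paper's Proposition~\ref{prop}.

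However, there is a genuine gap in the part where you claim that choosing $|\psi'|$ large and using $(\ppt-L)\psi(\tilde u)\le -c_0|\psi'|\sum_i\Theta^{\overline ii}+C|\psi'|-\psi''\Theta^{\overline ji}\tilde u_i\tilde u_{\overline j}$ is enough to close the argument. The good term $-c_0|\psi'|\sum_i\Theta^{\overline ii}$ indeed beats the $O(1)\sum_i\Theta^{\overline ii}$-errors once $|\psi'|$ is large, but the accompanying term $+C|\psi'|$ is \emph{not} of the form $O(1)\sum_i\Theta^{\overline ii}$ and does not disappear as $|\psi'|\to\infty$. The combination $-c_0|\psi'|\sum_i\Theta^{\overline ii}+C|\psi'|$ has the right sign only when $\sum_i\Theta^{\overline ii}=\mathcal{F}>C/c_0$; but the only a priori lower bound on $\mathcal{F}$ coming from $|\dot u|\le C$ (via AM--GM, as in the last part of Proposition~\ref{prop}) is $\mathcal{F}\ge\kappa$, and $\kappa$ need not exceed $C/c_0$. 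So in the regime $\kappa\le\mathcal{F}\lesssim C/c_0$ your inequality $0\le(\ppt-L)Q\le -c\sum_i\Theta^{\overline ii}<0$ is not established. This is precisely the difficulty that the paper's Proposition~\ref{prop} is designed to address: it produces the dichotomy (a) $\sum_kf_k(\chi_{k\overline k}-\lambda_k)-|\dot u|>\kappa\sum_kf_k$ \emph{or} (b) $f_k>\kappa\sum_if_i$ for all $k$, and the subsequent argument (the Case~1 / Case~2 split with the index set $I$, the Claim \eqref{claim}, and the separate treatment of (b)) is organized around exactly this alternative. The paper explicitly flags that this proposition is modified from the elliptic version in \cite{stw1503} by inserting $|\dot u|$ into case (a) so that the term $-\varphi'\dot u$ can be absorbed there; calling the adaptation ``mechanical'' and omitting the dichotomy (and case (b) entirely) leaves the proof incomplete.

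A secondary remark: you also need to justify why, once $|\psi'|$ is forced to be uniformly large, the term $2\varphi'^2\delta^{-1}F^{11}K$ created when substituting the first-order critical equation into $\frac{1}{\lambda_1^2}\Theta^{\overline ji}\partial_i\lambda_1\partial_{\overline j}\lambda_1$ is still beaten by $F^{11}\lambda_1^2/(40K)$. This is fine in the paper because $D_1,D_2$ are fixed \emph{after} the dichotomy and before concluding $\lambda_1\lesssim K$; but in a sketch that fixes $|\psi'|$ ``large'' to absorb errors, you should say explicitly that all constants in the definition of $\psi$ are independent of $\lambda_1/K$, so that the final comparison $\lambda_1^2\le 80\varphi'^2\delta^{-1}K^2$ closes with a uniform constant.
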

We need some preliminaries. For any real $(1,1)$ form $\xi$, we define
$$
P_{\alpha}(\xi):=\frac{1}{n-1}\Big((\tr_{\alpha}\xi)\alpha-\xi\Big)=\frac{1}{(n-1)!}\ast(\xi\wedge\alpha^{n-2}).
$$
Note that $\tr_{\alpha}\xi=\tr_{\alpha}\left(P_{\alpha}(\xi)\right)$ and
$$
\xi=\left(\tr_{\alpha}\left(P_{\alpha}(\xi)\right)\right)\alpha-(n-1)P_{\alpha}(\xi).
$$
Denote
\begin{align*}
\chi_{i\overline{j}}=&(\tr_{\alpha}\varpi)\alpha_{i\overline{j}}-(n-1)\varpi_{i\overline{j}},
\end{align*}
with $P_{\alpha}(\chi)=\varpi$, and
$$
W(u)_{i\overline{j}}=\left(\tr_{\alpha}Z(u)\right)\alpha_{i\overline{j}}-(n-1)Z(u)_{i\overline{j}}
=:W_{i\overline{j}}^pu_p+\overline{W^p_{j\overline{i}}}u_{\overline{p}}.$$
Then we define
\begin{align}\label{2ndgij}
g_{i\overline{j}}
=\chi_{i\overline{j}}+u_{i\overline{j}}+W(u)_{i\overline{j}}
\end{align}
with $P_{\alpha}(g_{i\overline{j}})=\tilde g_{i\overline{j}}$.

In orthonormal coordinates for $\alpha$ at any given point,  it follows that the component $Z_{i\overline{j}}$ is independent of $u_{\overline{i}}$ and $u_{j}$, and that $\nabla_{i} Z_{i\overline{i}}$ is independent of $u_i,u_{\overline{i}}, u_{i\overline{i}},\nabla_{i}u_i $. Indeed, in such local coordinates, from \eqref{zupq}, we have
\begin{align*}
Z_{i \overline{i}}
=&\frac{1}{2(n-1)}\left(\sum\limits_{p\neq i}\sum\limits_{k\neq i}u_p\overline{T_{pk}^k}+\sum\limits_{p\neq i}\sum\limits_{k\neq i}u_{\overline{p}}T_{pk}^k\right),\\
\nabla_{i}Z_{i \overline{i}}
=&\frac{1}{2(n-1)}\left(\sum\limits_{p\neq i}\sum\limits_{k\neq i}\left(\nabla_iu_{p}\overline{T_{pk}^k}+u_{p}\overline{\nabla_iT_{pk}^k}\right)+\sum\limits_{p\neq i}\sum\limits_{k\neq i}\left(u_{i\overline{p}}T_{pk}^k+u_{\overline{p}}\nabla_iT_{pk}^k\right)\right),\\
Z_{i\overline{j}}
=&\frac{1}{2(n-1)}\left(-\sum\limits\left(u_i\overline{T_{jk}^k}+u_k\overline{T_{kj\overline{i}}}\right)-\sum\limits_{k\neq i}\left(u_{\overline{j}}T_{ik}^k+u_{\overline{k}}T_{ki\overline{j}}\right)\right),
\end{align*}
where $i\neq j$ are fixed indices and in the last equality, we use the skew-symmetry of the torsion, as desired.

Furthermore, $\nabla_i\nabla_{\overline{i}} Z_{i \overline{i}}$ is independent of $\nabla_iu_i,\nabla_{\overline{i}}u_{\overline{i}},\nabla_{\overline{i}}\nabla_iu_i$ and $\nabla_{\overline{i}}u_{i\overline{i}}$. For any index $p$, $\nabla_iZ_{p\overline{i}}$ is independent of $\nabla_{i}u_i$. In this following part of this section, we will use these properties directly and do not prove them again.

Denote $\left(\tilde B_{i}{}^j\right)=\left(\tilde g_{i\overline{\ell}}\alpha^{\overline{\ell}j}\right)$ which can be seen as the endomorphism of $T^{1,0}M$. This endomorphism is Hermitian with respect to the Hermitian metric $\alpha$, i.e., for any tangent vectors $X=X^{i}\partial_i$ and $Y=Y^j\partial_j$, we have
\begin{align*}
\langle \tilde BX ,\,Y\rangle_{\alpha}=\tilde B_{i}{}^kX^i\alpha_{k\overline{\ell}}\overline{Y^{\ell}}
=\tilde g_{i\overline{q}}\alpha^{\overline{q}k}X^i\alpha_{k\overline{\ell}}\overline{Y^{\ell}}=\tilde g_{i\overline{\ell}}X^i \overline{Y^{\ell}}=X^i\alpha_{i\overline{j}}\overline{\tilde B_{p}{}^{j}Y^p}=\langle X ,\,\tilde BY\rangle_{\alpha}.
\end{align*}
We define
$$
\tilde F(\tilde B)=\log \det \tilde B=\log(\mu_1\cdots\mu_n)=:\tilde f(\mu_1,\cdots,\mu_n),
$$
where $\mu_1,\cdots,\mu_n$ are the eigenvalues of $\left(\tilde B_{i}{}^j\right)$. Then \eqref{paragau} can be rewritten as
\begin{align}\label{udef2nd1}
\tilde F (\tilde B)=\dot{u}+\psi=:h.
\end{align}
For   $\tilde f$ and $h$,  we have
\begin{enumerate}
\item[(i)] $\tilde f$ is defined on
$$
\tilde\Gamma:=\Gamma_n=\Big\{(x_1,\cdots,x_n)\in \mathbb{R}^n:\;x_i>0,\;i=1,\cdots,n\Big\}.
$$
\item[(ii)] $\tilde f$ is symmetric, smooth, concave and increasing, i.e., $\tilde f_i>0$ for all $i$.
\item[(iii)] $\sup_{\partial\tilde\Gamma}\tilde f\leq \inf_{M\times[0,T)}h$.
\item[(iv)] For any $\mu \in \tilde\Gamma$, we get $\lim_{t\rightarrow\infty}\tilde f(t\mu)=\sup_{\tilde\Gamma}\tilde f=\infty$.
\item[(v)]$h$ is bounded on $M\times[0,T)$ thanks to the estimate of $|\dot{u}|$ in Lemma \ref{lempreliminary}.
\end{enumerate}
We also define
$$
F (A):=\tilde F(\tilde B)=:f(\lambda_1,\cdots,\lambda_n),
$$
where $(A_{i}{}^{j})=\left(g_{i\overline{\ell}}\alpha^{\overline{\ell}j}\right)$, which is also an endomorphism of $T^{1,0}M$   with respect to the Hermitian metric $\alpha$, and $\lambda_1,\cdots,\lambda_n$ are its eigenvalues. There exists a map
\begin{align}
\label{Pdefn}
P:\;\mathbb{R}^n\longrightarrow \mathbb{R}^n,\quad \mu_k=\frac{1}{n-1}\sum\limits_{i\neq k}\lambda_{i},
\end{align}
induced   by $P_{\alpha}$ above. Then we have
$$
f(\lambda_1,\cdots,\lambda_n)=\tilde f\circ P(\lambda_1,\cdots,\lambda_n)
$$
defined on $\Gamma:=P^{-1}(\tilde\Gamma)$. Clearly,  $f$  satisfies the same conditions as $\tilde f$. Then \eqref{udef2nd1} can also be rewritten as
\begin{align}\label{udef2nd2}
F(A)=h.
\end{align}
We make some simple calculation about $\tilde f$ and $f$.
Since
\begin{align}\label{daotildef}
\tilde f_i=\frac{1}{\mu_i},
\end{align}
we can get
\begin{align}\label{daof}
f_i=\frac{1}{n-1}\sum\limits_{k\neq i}\frac{1}{\mu_k}.
\end{align}
Suppose that $\lambda_1\geq \lambda_2\geq\cdots\geq \lambda_n\in \Gamma$. From the definition of $P$, \eqref{daotildef} and \eqref{daof}, we have
\begin{align}
0 <\mu_1\leq \mu_2\leq \cdots\leq \mu_n,\nonumber\\
\tilde f_1\geq \tilde f_2\geq \cdots\geq \tilde f_n,\nonumber\\
0<f_1\leq f_2\leq\cdots\leq f_n,\nonumber\\
\label{daoshueigenn}\sum\limits_{k=1}^n\lambda_kf_k=\sum\limits_{k=1}^n\mu_k\tilde f_k=n.
\end{align}
where we also use the fact that $\left(\tilde B_i{}^j\right)$ is positive definite. For $k\geq 2$, we have
\begin{align}\label{tildef1fkf1}
  0<\frac{\tilde f_1}{n-1}\leq f_k\leq \tilde f_1.
\end{align}
and
\begin{align}\label{tildefk}
\tilde f_k\leq (n-1)f_1.
\end{align}
\begin{prop}\label{prop}
For any $x\in M$, choose orthonormal coordinates for $\alpha$ at $x$, with $g$ defined as in \eqref{2ndgij} is diagonal with eigenvalues $(\lambda_1,\cdots,\lambda_n)$. Then we have
$$
|\lambda|\geq R,
$$
for a uniform constant $R>0$, and for $f(\lambda)=\dot{u}+\psi$, there also holds two possibilities as follows.
\begin{enumerate}
\item[(a)]  We have
\begin{align}\label{partaformula}
\sum\limits_kf_{k}(\lambda)(\chi_{k\overline{k}}-\lambda_k)-|\dot{u}|>\kappa \sum\limits_kf_k(\lambda).
\end{align}
\item[(b)]  Or we have
\begin{align}
f_k(\lambda)>\kappa\sum\limits_{i=1}^nf_{i}(\lambda)
\end{align}
for all $k=1,2,\cdots,n$.
\end{enumerate}
In addition, we have $\sum\limits_{k=1}^nf_{k}(\lambda)>\kappa$. Here $0<\kappa<1$ is a uniform constant.
\end{prop}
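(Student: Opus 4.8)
The plan is to follow the structure of the corresponding argument in Sz\'ekelyhidi--Tosatti--Weinkove \cite{stw1503}, adapting it to the parabolic situation where the right-hand side is $h = \dot u + \psi$ rather than a fixed function. First I would establish the lower bound $|\lambda| \geq R$. Since $F(A) = h$ and $h$ is uniformly bounded by Lemma \ref{lempreliminary} and the boundedness of $\psi$, we have $\log(\mu_1 \cdots \mu_n) = h \leq C$, so $\prod_k \mu_k \leq e^C$; combined with $\tilde\omega > 0$ this already controls the product of the $\mu_k$ from above. For the lower bound one argues by contradiction: if $\lambda_n \to 0$ along some sequence (equivalently some $\mu_k \to 0$ by the structure of the map $P$ in \eqref{Pdefn}), then since all $\mu_i > 0$ and their product is bounded above, some other $\mu_j$ must blow up; but then $\tilde f = \log \det \tilde B \to$ a value we can pin down using property (iii), $\sup_{\partial\tilde\Gamma}\tilde f \leq \inf h$, together with concavity and monotonicity — in fact the concavity of $\tilde f$ plus $\sum \mu_k \tilde f_k = n$ from \eqref{daoshueigenn} forces a contradiction with $h$ staying bounded below. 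This gives a uniform $R > 0$ with $|\lambda| \geq R$.

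Next I would set up the dichotomy (a)/(b). The natural way is to fix a small $\kappa \in (0,1)$ (to be chosen at the end depending only on the background data) and consider the two cases according to whether the smallest eigenvalue $\lambda_n$ of the linearized coefficient is comparable to $\lambda_1$ or much smaller. Precisely: using \eqref{daof}, $f_k = \frac{1}{n-1}\sum_{i \neq k} \frac{1}{\mu_i}$, so $f_k$ is large exactly when the $\mu_i$ with $i \neq k$ are small. If $\mu_n$ (the largest $\mu$) is not too large relative to the others — say all $\mu_i$ are mutually comparable — then every $f_k$ is comparable to $\sum_i f_i$ and alternative (b) holds with a uniform $\kappa$. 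Otherwise $\mu_n$ is very large, hence $\lambda_1$ is very large (since $\mu_n = \frac{1}{n-1}\sum_{i \neq n}\lambda_i$ forces some $\lambda_i$ large, and by the ordering $\lambda_1$), and in this regime I would show the "concavity inequality" \eqref{partaformula}. The key tool is the standard concavity trick: since $\tilde f$ is concave and increasing, for the diagonal matrix with entries $\chi_{k\bar k}$ (a fixed bounded quantity) one has
\begin{align*}
\sum_k f_k(\lambda)(\chi_{k\bar k} - \lambda_k) \geq f(\chi) - f(\lambda) = f(\chi) - h \geq f(\chi) - \psi - \dot u,
\end{align*}
using $F(A) = h$. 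Since $\chi$ is a fixed background form, $f(\chi)$ is bounded; but I need the \emph{strict} gain by $\kappa \sum_k f_k$, which comes from the fact that in case (a) at least one $\lambda_k$ is very large (much larger than the corresponding $\chi_{k\bar k}$ and than $|\dot u|$), so the term $f_k(\lambda)(\chi_{k\bar k} - \lambda_k)$ is very negative unless $f_k$ is tiny — and a careful bookkeeping of which $f_k$ are large (those omitting a small $\mu_i$) versus which $\lambda_k$ are large yields the surplus $\kappa\sum_k f_k(\lambda)$ after absorbing the bounded terms $f(\chi)$ and $|\dot u|$. The inequalities \eqref{tildef1fkf1} and \eqref{tildefk} relating $f_1, f_k, \tilde f_1, \tilde f_k$ are exactly what is needed to convert between "$\mu_i$ small" statements and "$f_k$ large" statements.

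Finally, for the last assertion $\sum_k f_k(\lambda) > \kappa$: in case (b) this is immediate since $f_k > \kappa \sum_i f_i$ for all $k$ forces $\sum_i f_i > 0$ and in fact $n f_k > \kappa n \sum f_i$ is vacuous, so instead I note $\sum_k f_k \geq f_n = \frac{1}{n-1}\sum_{i\neq n}\mu_i^{-1}$ and use that the $\mu_i$ are bounded above (from $\prod \mu_i \leq e^C$ and the lower bound $|\lambda| \geq R$, which via $P$ gives a lower bound on the $\mu_i$, hence by the product bound an upper bound on each $\mu_i$), so $f_n$ is bounded below by a uniform constant. In case (a), one uses \eqref{partaformula} directly: the left side is at most $\sum_k f_k \cdot \max_k|\chi_{k\bar k} - \lambda_k| + |\dot u|$, but more usefully, since $\lambda_1$ is large in this case, $\sum_k f_k(\lambda)$ cannot be too small or else the left-hand side of \eqref{partaformula} would be small while the inequality demands it exceed $\kappa\sum f_k \geq 0$ — so I'd instead derive it from $\sum_k f_k \lambda_k = n$ (from \eqref{daoshueigenn}) combined with the upper bound on the $\lambda_k$: since $\lambda$ ranges in a bounded region would be false in case (a), so rather I use that $f_n \geq c > 0$ always, arguing as above that the largest $f$, namely $f_n$, omits the largest $\mu$ and hence $f_n \geq \frac{1}{n-1}(n-1)\mu_{n-1}^{-1} \cdot$(something) is bounded below since not all of $\mu_1,\dots,\mu_{n-1}$ can be large (their product with $\mu_n$ being $\leq e^C$). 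The main obstacle I anticipate is the delicate case-splitting in step (a): choosing the threshold defining "$\lambda_1$ large" and the value of $\kappa$ consistently, and carrying out the absorption of the bounded error terms $f(\chi)$ and $|\dot u|$ into a fixed fraction of $\sum_k f_k$, which requires knowing that $\sum_k f_k$ itself is large in that regime — a point that must be checked to avoid circularity, presumably by first observing that when $\lambda_1$ is large, $\mu_1$ is forced small (again via the product bound on the $\mu_k$), hence $f_2, \dots, f_n$ and thus $\sum f_k$ are all large.
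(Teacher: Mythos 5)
Your overall architecture (a case split according to whether the $\mu_i$ are mutually comparable, the uniform bound on $h=\dot u+\psi$ from Lemma \ref{lempreliminary}, and the treatment of alternative (b)) matches the paper, but the central step --- alternative (a), i.e.\ inequality \eqref{partaformula} --- is not actually proved. The concavity inequality you propose, $\sum_k f_k(\lambda)(\chi_{k\overline{k}}-\lambda_k)\geq f(\chi)-f(\lambda)=f(\chi)-h$, bounds the left-hand side below only by a uniform constant, since $f(\chi)$ and $h$ are both uniformly bounded; but \eqref{partaformula} requires a lower bound of size $|\dot u|+\kappa\sum_kf_k(\lambda)$, and precisely in the regime where (a) must be produced ($\mu_1$ small, $\lambda_1$ large) one has $\sum_kf_k(\lambda)=\sum_k\mu_k^{-1}\geq\mu_1^{-1}\to\infty$. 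So concavity cannot deliver the surplus, and the ``careful bookkeeping'' you defer is exactly the missing content; moreover your heuristic that the terms $f_k(\lambda)(\chi_{k\overline{k}}-\lambda_k)$ are ``very negative unless $f_k$ is tiny'' points the wrong way, since by \eqref{daoshueigenn} the total negative contribution is $\sum_kf_k\lambda_k=n$, a constant. The actual mechanism (the paper's) is that the positive part is itself proportional to $\sum_kf_k$: using \eqref{daof} and $\chi_{k\overline{k}}=\sum_i\varpi_{i\overline{i}}-(n-1)\varpi_{k\overline{k}}$ one computes $\sum_kf_k\chi_{k\overline{k}}=\sum_k\varpi_{k\overline{k}}/\mu_k>\tau\sum_kf_k$, where $\tau>0$ is a uniform lower bound for $\varpi$; if $\mu_1$ is small enough that $\tau/(2\mu_1)>n+C\geq\sum_k\lambda_kf_k+|\dot u|$, this gives \eqref{partaformula} with $\kappa\leq\tau/2$, and otherwise $\mu_1$ is bounded below, the bound $\prod_k\mu_k=e^{h}\leq e^{C}$ makes all $\mu_k$ comparable, and (b) follows as you say. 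Without this (or an equivalent) computation, your case (a) is a gap, not a proof.

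Two smaller points. For $|\lambda|\geq R$ your contradiction sketch is off: the negation is that all $\lambda_i$, hence all $\mu_i$, are simultaneously small (no $\mu_j$ ``blows up''), and the bound follows directly from the lower bound on $h$ rather than from property (iii) or concavity: $|\lambda|\geq n^{-1/2}\sum_i\lambda_i=n^{-1/2}\sum_i\mu_i\geq\sqrt{n}\,\bigl(\prod_k\mu_k\bigr)^{1/n}=\sqrt{n}\,e^{h/n}\geq R$. For $\sum_kf_k>\kappa$, your claim that $|\lambda|\geq R$ yields a lower bound on each $\mu_i$ (and then, via the product bound, an upper bound on each $\mu_i$) is false --- in the regime of case (a), $\mu_1$ is arbitrarily small and $\mu_n$ arbitrarily large; what is true and sufficient (essentially your final parenthetical remark) is that the smallest value satisfies $\mu_1\leq\bigl(\prod_k\mu_k\bigr)^{1/n}=e^{h/n}\leq e^{C/n}$, whence $\sum_kf_k\geq\mu_1^{-1}\geq e^{-C/n}$; the paper obtains the same via the arithmetic--geometric mean inequality, $\sum_k\mu_k^{-1}\geq n\,e^{-h/n}$.
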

\begin{proof}
By the Cauchy-Schwarz inequality and the inequality of arithmetic and geometric means, we get
$$
|\lambda|^2\geq \frac{(\lambda_1+\cdots+\lambda_n)^2}{n}=\frac{(\mu_1+\cdots+\mu_n)^2}{n}\geq n(\mu_1\cdots\mu_n)^{\frac{2}{n}}=ne^{2h/n} \geq R^2,
$$
where for the last inequality we use the fact that $\tilde F(\tilde B)=\dot u+\psi$ is uniformly bounded by Lemma \ref{lempreliminary}.

Since $\chi_{k\overline{k}}=\left(\sum\limits_{i=1}^n\varpi_{i\overline{i}}\right)-(n-1)\varpi_{k\overline{k}}$, \eqref{daof} implies
\begin{align}\label{parta1}
\sum\limits_{k}f_{k}(\lambda)\chi_{k\overline{k}}=\sum\limits_{k}\tilde f_{k}(\lambda)\varpi_{k\overline{k}}=\sum\limits_{k}\frac{1}{\mu_k} \varpi_{k\overline{k}}>\tau\sum\limits_{k}\frac{1}{\mu_k}=\tau\sum\limits_{k}f_k(\lambda),
\end{align}
where we use the fact that $\varpi$ has a uniform lower bound. If $\lambda_1$ is relatively large, i.e., $\mu_1$ is relatively small such that $\frac{\tau}{2\mu_1}>n+C\geq\sum\limits_{k=1}^n\lambda_kf_k(\lambda)+|\dot{u}|$, where $C$ is the uniform bound of $\dot{u}$ in Lemma \ref{lempreliminary}, then we have
\begin{align*}
\sum\limits_{k}f_{k}(\lambda)\chi_{k\overline{k}}
\geq& \frac{\tau}{2}\sum\limits_{k}\frac{1}{\mu_k}+\frac{\tau}{2}\sum\limits_{k}\frac{1}{\mu_k}\\
\geq &\frac{\tau}{2}\sum\limits_{k}\frac{1}{\mu_k}+\frac{\tau}{2\mu_1}\\
\geq &\frac{\tau}{2}\sum\limits_{k}\frac{1}{\mu_k}+n+C\\
\geq&\frac{\tau}{2}\sum\limits_{k}f_k(\lambda)+\sum\limits_{k=1}^n\lambda_kf_k(\lambda)+|\dot u|,
\end{align*}
as required. Otherwise, there exists a large constant $A>1$ such that  $\mu_2\leq\cdots\leq\mu_n\leq A\mu_1$ since $\dot{u}+\psi=\log(\mu_1\cdots\mu_n)$ is bounded and we just need to prove
\begin{align*}
f_n\geq\cdots\geq f_1=&\frac{1}{n-1}\sum\limits_{i=2}^n\frac{1}{\mu_i}\geq \kappa \sum\limits_{i=1}^n\frac{1}{\mu_i}.
\end{align*}
To get this, we need to choose $0<\kappa<1$ such that
$$
(1-\kappa)\sum\limits_{i=2}^n\frac{1}{\mu_i}\geq \frac{(1-\kappa)(n-1)}{A\mu_1}\geq \frac{\kappa}{\mu_1},
$$
as required.

In addition, using the inequality of arithmetic and geometric means, we can get
\begin{align*}
\sum\limits_{k=1}^n f_k(\lambda)
=&\sum\limits_{k=1}^n\frac{1}{\mu_k}\geq n(\mu_1\cdots\mu_n)^{-1/n}=ne^{-h/n}\geq \kappa,
\end{align*}
where we use the fact that $h$ is uniformly bounded by Lemma \ref{lempreliminary}.
\end{proof}
Now we need some basic formulae for the derivatives of eigenvalues (see for example \cite{spruck}).
\begin{lem}[Spruck \cite{spruck}]
The derivative of the eigenvalue $\lambda_i$ at a diagonal matrix $(A_{i}{}^j)$ with distinct eigenvalue are
\begin{align}
\label{eigenvalue1st}\lambda_{i}^{pq}=&\delta_{pi}\delta_{qi},\\
\label{eigenvalue2nd}\lambda_{i}^{pq,rs}=&(1-\delta_{ip})\frac{\delta_{iq}\delta_{ir}\delta_{ps}}{\lambda_i-\lambda_p}
+(1-\delta_{ir})\frac{\delta_{is}\delta_{ip}\delta_{rq}}{\lambda_i-\lambda_r},
\end{align}
where
$$
\lambda_{i}^{pq}=\frac{\partial\lambda_i}{\partial  A_{p}{}^q}, \quad
\lambda_{i}^{pq,rs}=\frac{\partial^2\lambda_i}{\partial  A_{p}{}^q\partial  A_{r}{}^s}.
$$
\end{lem}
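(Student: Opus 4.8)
The plan is to derive both formulae by first-order perturbation theory applied to the eigenvalue equation $Av_i=\lambda_i v_i$. At the base point $(A_p{}^q)$ is diagonal with pairwise distinct entries $\lambda_1,\dots,\lambda_n$, so each $\lambda_i$ is a simple root of $\det(A-\lambda I)$; by the implicit function theorem $\lambda_i$ extends to a smooth function of the entries $A_p{}^q$ near the base point, and one may choose an accompanying eigenvector $v_i=v_i(A)$, smooth in $A$, normalised so that its $i$-th coordinate is constantly $1$, i.e. $\langle v_i,e_i\rangle\equiv 1$ for the standard basis $e_1,\dots,e_n$. The only two structural facts used are: (1) differentiating the normalisation, $\langle\partial v_i,e_i\rangle=0$ for any first partial $\partial=\partial/\partial A_p{}^q$ (and likewise for mixed second partials); and (2) since $A=\mathrm{diag}(\lambda_1,\dots,\lambda_n)$ at the base point, $\langle Aw,e_i\rangle=\lambda_i\langle w,e_i\rangle$ for every vector $w$.

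First I would compute the first derivatives. Fixing $p,q$ and writing $E_{pq}$ for the elementary matrix with a single $1$ in row $p$ and column $q$, so that $E_{pq}w=\langle w,e_q\rangle e_p$, differentiation of $Av_i=\lambda_i v_i$ in the $A_p{}^q$-direction gives $\delta_{iq}e_p+A\,\partial v_i=\lambda_i^{pq}e_i+\lambda_i\,\partial v_i$. Pairing with $e_i$ and using the two facts above yields $\lambda_i^{pq}=\delta_{ip}\delta_{iq}$, which is \eqref{eigenvalue1st}; pairing with $e_j$ for $j\neq i$ gives $\delta_{iq}\delta_{jp}+\lambda_j\langle\partial v_i,e_j\rangle=\lambda_i\langle\partial v_i,e_j\rangle$, hence
\[
\langle \partial v_i,e_j\rangle=\frac{\delta_{iq}\delta_{jp}}{\lambda_i-\lambda_j}\quad (j\neq i),
\]
the denominator being non-zero by distinctness; together with $\langle\partial v_i,e_i\rangle=0$ this pins down $\partial v_i/\partial A_p{}^q$ at the base point.

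Next I would differentiate $Av_i=\lambda_i v_i$ once in the $A_p{}^q$-direction ($\partial_1$) and once in the $A_r{}^s$-direction ($\partial_2$). Using $\partial_1\partial_2 A=0$ and evaluating at the base point ($v_i=e_i$),
\[
(\partial_1 A)(\partial_2 v_i)+(\partial_2 A)(\partial_1 v_i)+A\,\partial_1\partial_2 v_i=\lambda_i^{pq,rs}e_i+\lambda_i^{rs}\partial_1 v_i+\lambda_i^{pq}\partial_2 v_i+\lambda_i\,\partial_1\partial_2 v_i.
\]
Pairing with $e_i$: the $A\,\partial_1\partial_2 v_i$ term cancels against $\lambda_i\,\partial_1\partial_2 v_i$ by fact (2), and $\lambda_i^{rs}\langle\partial_1 v_i,e_i\rangle$, $\lambda_i^{pq}\langle\partial_2 v_i,e_i\rangle$ vanish by fact (1), leaving
\[
\lambda_i^{pq,rs}=\langle(\partial_1 A)(\partial_2 v_i),e_i\rangle+\langle(\partial_2 A)(\partial_1 v_i),e_i\rangle=\delta_{ip}\langle\partial_2 v_i,e_q\rangle+\delta_{ir}\langle\partial_1 v_i,e_s\rangle.
\]
Substituting the first-order eigenvector formula from the previous step (with index pair $(r,s)$ for $\partial_2 v_i$ and $(p,q)$ for $\partial_1 v_i$) gives $\lambda_i^{pq,rs}=\delta_{ip}(1-\delta_{iq})\frac{\delta_{is}\delta_{qr}}{\lambda_i-\lambda_q}+\delta_{ir}(1-\delta_{is})\frac{\delta_{iq}\delta_{sp}}{\lambda_i-\lambda_s}$, and on the supports of the Kronecker factors $\delta_{qr}$, $\delta_{sp}$ one has $\lambda_q=\lambda_r$ and $\lambda_s=\lambda_p$, so this rearranges to exactly \eqref{eigenvalue2nd}.

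The argument is entirely elementary, and there is no genuine analytic obstacle: distinctness of the $\lambda_i$ is precisely what makes $\lambda_i$ and $v_i$ smooth and all the denominators non-vanishing. The one point requiring care is the bookkeeping of the Kronecker symbols in the final substitution, in particular keeping straight which of the two index pairs $(p,q)$, $(r,s)$ labels which eigenvector derivative, so that the two resulting terms assemble into the symmetric pair displayed in \eqref{eigenvalue2nd}.
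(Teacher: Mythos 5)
Your proof is correct: the paper does not actually prove this lemma (it is quoted from Spruck), and your argument is the standard first- and second-order perturbation of the eigenvalue equation $Av_i=\lambda_i v_i$ at a simple eigenvalue with the normalization $\langle v_i,e_i\rangle\equiv 1$, which is exactly the derivation behind the cited formulas. Your closing bookkeeping also checks out: on the supports of $\delta_{qr}$ and $\delta_{sp}$ one has $\lambda_q=\lambda_r$ and $\lambda_s=\lambda_p$, so your expression coincides with \eqref{eigenvalue2nd}, and the result is insensitive to whether $\partial A/\partial A_{p}{}^{q}$ is read as the elementary matrix $E_{pq}$ or $E_{qp}$, since the displayed formula is invariant under swapping $(p,q)\mapsto(q,p)$, $(r,s)\mapsto(s,r)$.
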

\begin{lem}[Gerhardt \cite{gerhardt}]
\label{lemgerhardt}
If $F(A)=f(\lambda_1,\cdots, \lambda_n)$ in terms of a symmetric funtion of the eigenvalues, then at a diagonal matrix $(A_{i}{}^j)$ with distinct eigenvalues we have
\begin{align}
\label{f1stdaoshu}F^{ij}=&\delta_{ij} f_i,\\
\label{f2nddaoshu}F^{ij,rs}=&f_{ir}\delta_{ij}\delta_{rs}+\frac{f_i-f_j}{\lambda_i-\lambda_j}(1-\delta_{ij})\delta_{is}\delta_{jr},
\end{align}
where
$$
F^{ij}=\frac{\partial F}{\partial  A_{i}{}^j}, \quad F^{pq,rs}=\frac{\partial^2F}{\partial  A_{i}{}^j\partial  A_{r}{}^s}.
$$
\end{lem}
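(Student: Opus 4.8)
The plan is to obtain both identities by the chain rule, routing the differentiation through the eigenvalue functions $\lambda_k=\lambda_k(A)$ and then inserting Spruck's formulae \eqref{eigenvalue1st} and \eqref{eigenvalue2nd}. Near a diagonal matrix $(A_i{}^j)$ with distinct eigenvalues the maps $A\mapsto\lambda_k(A)$ are smooth, so $F=f(\lambda_1,\dots,\lambda_n)$ is smooth there and the manipulations below are legitimate.

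For the first-order formula I would write $F^{ij}=\sum_k f_k\,\lambda_k^{ij}$ and substitute $\lambda_k^{ij}=\delta_{ki}\delta_{kj}$ from \eqref{eigenvalue1st}; the sum collapses to $\delta_{ij}f_i$, which is \eqref{f1stdaoshu}.

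For the second-order formula, differentiate $F^{ij}=\sum_k f_k(\lambda(A))\,\lambda_k^{ij}(A)$ once more in $A_r{}^s$, which gives
$$
F^{ij,rs}=\sum_{k,\ell} f_{k\ell}\,\lambda_k^{ij}\lambda_\ell^{rs}+\sum_k f_k\,\lambda_k^{ij,rs}.
$$
Into the first sum feed $\lambda_k^{ij}=\delta_{ki}\delta_{kj}$ and $\lambda_\ell^{rs}=\delta_{\ell r}\delta_{\ell s}$, obtaining $f_{ir}\,\delta_{ij}\delta_{rs}$. Into the second sum feed \eqref{eigenvalue2nd}: its first piece, $\sum_k f_k(1-\delta_{ki})\delta_{kj}\delta_{kr}\delta_{is}/(\lambda_k-\lambda_i)$, forces $k=j=r$ and $i=s$ with $j\ne i$, so it contributes $(1-\delta_{ij})\delta_{jr}\delta_{is}\,f_j/(\lambda_j-\lambda_i)$; its second piece forces $k=i=s$ and $r=j$ with $i\ne j$, contributing $(1-\delta_{ij})\delta_{is}\delta_{jr}\,f_i/(\lambda_i-\lambda_j)$. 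Putting both over the common denominator $\lambda_i-\lambda_j$ and adding gives $(1-\delta_{ij})\delta_{is}\delta_{jr}\,(f_i-f_j)/(\lambda_i-\lambda_j)$, which together with the first sum is \eqref{f2nddaoshu}.

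Granting Spruck's Lemma there is no serious obstacle; the only care needed is the Kronecker-delta bookkeeping in the second-order term --- tracking which indices are forced equal, and keeping straight the sign coming from $\lambda_j-\lambda_i$ versus $\lambda_i-\lambda_j$. If one did not wish to invoke Spruck's Lemma, the genuine content would be its proof, i.e.\ the first- and second-order perturbation theory of a simple eigenvalue: differentiate $Av_k=\lambda_k v_k$ for a unit eigenvector $v_k$, use $\partial\lambda_k=\langle(\partial A)v_k,v_k\rangle$ and $\partial v_k=\sum_{p\ne k}\langle(\partial A)v_k,v_p\rangle(\lambda_k-\lambda_p)^{-1}v_p$, and evaluate at the diagonal matrix, where $v_k$ is the $k$-th coordinate vector; the apparent singularities $(\lambda_i-\lambda_j)^{-1}$ are harmless since the eigenvalues are assumed distinct.
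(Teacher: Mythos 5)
Your proof is correct, and it is exactly the derivation the paper's organization implicitly suggests: the paper records Spruck's eigenvalue-derivative formulae (Lemma~\ref{lemgerhardt} is stated immediately after those and cited to Gerhardt without proof), and the chain-rule bookkeeping you carry out --- collapsing the deltas in $F^{ij}=\sum_k f_k\lambda_k^{ij}$, then splitting $F^{ij,rs}$ into the $\sum f_{k\ell}\lambda_k^{ij}\lambda_\ell^{rs}$ and $\sum f_k\lambda_k^{ij,rs}$ pieces and recombining over $\lambda_i-\lambda_j$ --- is precisely the standard route from Spruck to Gerhardt. The index-tracking in the second-order term is the only place where one could slip, and you have it right, including the sign flip $(\lambda_j-\lambda_i)^{-1}\mapsto -(\lambda_i-\lambda_j)^{-1}$.
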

These formulae make sense even if the eigenvalues are not distinct, since if $f$ is symmetric  then $F$ is a smooth function on the space of matrices. In particular, we have $f_i\longrightarrow f_j$ as $\lambda_i\longrightarrow \lambda_j$. If $f$ is concave and symmetric, then we have (see \cite{spruck}) that $\frac{f_i- f_j}{\lambda_i-\lambda_j}\leq 0$. In particular, if $\lambda_i\leq \lambda_j$, then we have $f_i\geq f_j$. Also it follows that
\begin{align}\label{cabor67}
F^{ij,rs}\left(\nabla_{k}u_{i\overline{j}}\right)\left(\nabla_{\overline{k}}u_{r\overline{s}}\right)
=&f_{ij}\left(\nabla_{k}u_{i\overline{i}}\right)\left(\nabla_{\overline{k}}u_{j\overline{j}}\right)
+\sum\limits_{p\neq q}\frac{f_p- f_q}{\lambda_p-\lambda_q}|\nabla_{k}u_{p\overline{q}}|^2\\
\leq&f_{ij}\left(\nabla_{k}u_{i\overline{i}}\right)\left(\nabla_{\overline{k}}u_{j\overline{j}}\right)
+\sum\limits_{p>1}\frac{f_1- f_p}{\lambda_1-\lambda_p}|\nabla_{k}u_{p\overline{1}}|^2.\nonumber
\end{align}
%
\begin{proof}[Proof of Theorem \ref{2ndestimate}]
We use the ideas from \cite{stw1503} in the elliptic setting. Since here we consider the parabolic equation, there are some new terms involved time $t$ which we need to estimate and we need Proposition \ref{prop} which is slightly different from \cite[Proposition 2.3]{stw1503}. Also note that here we use covariant derivatives instead of partial derivatives used in \cite{stw1503}\footnote{The author thanks Professor Valentino Tosatti suggesting him this point.}. Hence we can be brief and point out the main differences in the following part.
It is sufficient to prove
$$\lambda_1\leq CK.$$
Indeed, since $\sum\limits_{i=1}^n\lambda_i=\sum\limits_{i=1}^n\mu_i>0$, if $\lambda_1\leq CK$ then so is $|\lambda_k|,\,k=2,\cdots,n$, which implies \eqref{formula2ndestimate}.
To obtain this, we consider the function
\begin{align*}
H=\log\lambda_1+\phi\left(|\nabla u|^2_{\alpha}\right)+\varphi(u),
\end{align*}
where
$$
\phi(s)=-\frac{1}{2}\log\left(1-\frac{s}{2K}\right),\quad\varphi(s)=D_1e^{-D_2s},
$$
with sufficiently large uniform constants $D_1,D_2>0$ to be determined later.
Note that
$$
\phi\left(|\nabla u|^2_{\alpha}\right)\in[0,\,2\log 2]
$$
and
\begin{align*}
\frac{1}{4K}<\phi'<\frac{1}{2K},\quad \phi''=2(\phi')^2.
\end{align*}
We work at a point $(x_0,t_0)$ where $H$ achieves its maximum. Without loss of generality, we assume $0<t_0<T$. Choose orthonormal complex coordinates such that $x_0$ is the origin, $\alpha$ is a unit matrix and $g$ is diagonal with $\lambda_1=g_{1\overline{1}}$. To make sure that $H$ is smooth at this point, we fix a diagonal matrix $B$ with $B_{1}{}^1=0,\; 0<B_{2}{}^2<\cdots<B_{n}{}^n$, and define
$\tilde A=A-B$ with eigenvalues denoted by $\tilde\lambda_1,\cdots,\tilde\lambda_{n}$. Clearly, at this point  $(x_0,t_0)$, we have
$$
\tilde\lambda_1=\lambda_1,\quad \tilde\lambda_i=\lambda_i-B_{i}{}^{i},\quad i=2,\cdots,n
$$
and
$
\tilde\lambda_1>\cdots>\tilde\lambda_n.
$
Noting that
$
\sum\limits_{i=1}^n\lambda_i=\sum\limits_{i=1}^n\mu_i>0,
$
we can assume that  $B$ is small enough such that
$$
\sum\limits_{i=1}^n\tilde\lambda_i>-1
$$
and
\begin{align}\label{6.18}
  \sum\limits_{p>1}\frac{1}{\lambda_1-\tilde\lambda_p}\leq C
\end{align}
where $C$ is a fixed constant depending only on $n$. We give some remarks about $B$. It can also be considered as an endomorphism of $T^{1,0}M$, and is represented by a constant diagonal matrix $(B_{i}{}^j)$ in these local coordinates. Hence, we can deduce
\begin{align*}
\nabla_{\overline{j}}B_{r}{}^s=&\partial_{\overline{j}}B_{r}{}^s=0,\quad \nabla_{i}\nabla_{\overline{j}}B_{r}{}^s=0,\\ \nabla_{i}B_{r}{}^s=&\partial_{i}B_{r}{}^s-\Gamma_{ir}^pB_{p}{}^s+\Gamma_{ip}^sB_{r}{}^p=\Gamma_{ir}^s\left(B_{r}{}^r-B_{s}{}^s\right),\\
\nabla_{\overline{j}}\nabla_{i}B_{r}{}^s=&\partial_{\overline{j}}\nabla_{i}B_{r}{}^s=R_{i\overline{j}r}{}^s\left(B_{s}{}^s-B_{r}{}^r\right).\\
\end{align*}
Now consider the quantity
$$
\tilde H=\log\tilde\lambda_1+\phi\left(|\nabla u|^2_{\alpha}\right)+\varphi(u),
$$
which is smooth in this chart and attains its maximum at the point $(x_0,t_0)$. The following calculation is at this point. We may assume $\lambda_1\gg K>1$. We use subscripts $k$ and $\overline{\ell}$ to denote the partial derivatives $\partial/\partial z^k$ and $\partial/\partial \overline{z}^{\ell}$. As in \cite{stw1503}, we have
\begin{align}
\label{hq}\tilde H_q=&\frac{\tilde{\lambda}_{1,q}}{\lambda_1} + \phi'V_q  +
\varphi' u_q=0, \quad \textrm{for } V_q : = u_ru_{\overline{r}q} +u_{\overline{r}}\nabla_qu_{r}.\\
\label{hqq}\tilde{H}_{q\overline{q}}=& \frac{\tilde{\lambda}_{1,q\overline{q}}}{\lambda_1} -
\frac{|\tilde{\lambda}_{1,q}|^2}{\lambda_1^2}
+ \phi'\Big( u_r\nabla_{\overline{q}}u_{\overline{r}q}+u_{\overline{r}}\nabla_{\overline{q}}\nabla_{q}u_r  +
|\nabla_{q}u_{r}|^2 + |u_{\overline{r}q}|^2 \Big)  \\
& + \phi''|V_q|^2 + \varphi''|u_q|^2 + \varphi' u_{q\overline{q}}. \nonumber
\end{align}
From \eqref{eigenvalue1st}, we get
\begin{align}\label{lambda1p}
\tilde\lambda_{1,p}
= \tilde\lambda_{1}^{rs}\left(\alpha^{\overline{j}s}\nabla_{p}g_{r\overline{j}}-\nabla_pB_{r}{}^s\right)
=\nabla_{p}g_{1\overline{1}}-\nabla_pB_{1}{}^1=\nabla_{p}g_{1\overline{1}},
\end{align}
where we use the fact that $\nabla_pB_{1}{}^1=0$. Then using this formula and \eqref{eigenvalue1st} and \eqref{eigenvalue2nd}, we can deduce
\begin{align*}
\tilde\lambda_{1,p\overline{q}}
=&\tilde\lambda_{1}^{rs,ab}\left(\alpha^{\overline{j}s} \nabla_{p}g_{r\overline{j}}-\nabla_{p}B_{r}{}^s\right) \left(\alpha^{\overline{\ell}b} \nabla_{\overline{q}}g_{a\overline{\ell}}-\nabla_{\overline{q}}B_{a}{}^b\right)
+\tilde\lambda_{1}^{rs}\left(\alpha^{\overline{j}s}\nabla_{\overline{q}}\nabla_{p}g_{r\overline{j}}-\nabla_{\overline{q}}\nabla_{p}B_{r}{}^s \right)  \\
=&\tilde\lambda_{1}^{rs,ab}\left(\nabla_{p}g_{r\overline{s}}-\nabla_{p}B_{r}{}^s\right) \left(\nabla_{\overline{q}}g_{a\overline{b}}\right)
+\tilde\lambda_{1}^{rs}\left(\nabla_{\overline{q}}\nabla_{p}g_{r\overline{s}}-\nabla_{\overline{q}}\nabla_{p}B_{r}{}^s \right)\nonumber  \\
=&\sum\limits_{r\neq 1}\frac{1}{\lambda_1-\tilde\lambda_{r}}\left(\nabla_{p}g_{r\overline{1}} -\nabla_{p}B_{r}{}^1\right)\left(\nabla_{\overline{q}}g_{1\overline{r}}\right)
+\sum\limits_{a\neq 1}\frac{1}{\lambda_1-\tilde\lambda_{a}}\left(\nabla_{p}g_{1\overline{a}}-\nabla_{p}B_{1}{}^a\right)\left(\nabla_{\overline{q}}g_{a\overline{1}}\right) \nonumber\\
&+\nabla_{\overline{q}}\nabla_{p}g_{1\overline{1}},      \nonumber
\end{align*}
where we also use the fact that $\nabla_{\overline{q}}B_{a}{}^b=\nabla_{\overline{q}}\nabla_{p}B_{1}{}^1=0$. In particular, we have
\begin{align}\label{lambda1qq}
\tilde\lambda_{1,q\overline{q}}=&\nabla_{\overline{q}}\nabla_{q}g_{1\overline{1}}+ \sum\limits_{r> 1}\frac{|\nabla_{q}g_{r\overline{1}} |^2+|\nabla_{q}g_{1\overline{r}}|^2}{\lambda_1-\tilde\lambda_{r}}
 -\sum\limits_{r> 1}\frac{\left(\nabla_{q}B_{r}{}^1\right)\left(\nabla_{\overline{q}}g_{1\overline{r}}\right)
 +\left(\nabla_{q}B_{1}{}^r\right)\left(\nabla_{\overline{q}}g_{r\overline{1}}\right)}{\lambda_1-\tilde\lambda_{r}}.
\end{align}
From \eqref{udef2nd2} and \eqref{f1stdaoshu}, we have
\begin{align}\label{dotuk}
\dot{u}_k
= F^{rs}\nabla_k\left(g_{r\overline{j}}\alpha^{\overline{j}s} \right) -\psi_k
= F^{rs}\alpha^{\overline{j}s}\nabla_kg_{r\overline{j}} -\psi_k
= F^{rr}\nabla_kg_{r\overline{r}} -\psi_k .
\end{align}
Also from \eqref{f1stdaoshu}, we know that $F^{kk}=f_k,\,\tilde F^{kk}=\tilde f_k$. For later use, we write $\mathcal{F}=\sum\limits_{k=1}^n F^{kk}\geq \kappa$.
The formula \eqref{dotuk} together with \eqref{f1stdaoshu} and \eqref{f2nddaoshu} implies
\begin{align}\label{dotukl}
\dot{u}_{k\overline{\ell}}
=&F^{rs,ab}\nabla_{k}\left(g_{r\overline{j}}\alpha^{\overline{j}s} \right)\nabla_{\overline{\ell}}\left(g_{a\overline{c}}\alpha^{\overline{c}b} \right)+F^{rs}\nabla_{\overline{\ell}}\nabla_{k}\left(g_{r\overline{j}}\alpha^{\overline{j}s} \right)-\psi_{k\overline{\ell}}\\
=&F^{rs,ab}\left(\alpha^{\overline{j}s} \nabla_kg_{r\overline{j}}
\right)\left(\alpha^{\overline{c}b}\nabla_{\overline{\ell}}g_{a\overline{c}}\right)
 +F^{rs}\alpha^{\overline{j}s}\nabla_{\overline{\ell}}\nabla_{k}g_{r\overline{j}}-\psi_{k\overline{\ell}}\nonumber\\
=&F^{rs,ab}\left(\nabla_kg_{r\overline{s}}\right)\left(\nabla_{\overline{\ell}}g_{a\overline{b}}\right)
 +F^{rr}\nabla_{\overline{\ell}}\nabla_{k}g_{r\overline{r}}-\psi_{k\overline{\ell}}.\nonumber
\end{align}
Combining \eqref{eigenvalue1st}, \eqref{dotuk} and \eqref{dotukl} gives
\begin{align}\label{dotlambda}
\partial_t\tilde{\lambda}_{1}
=&\tilde\lambda_{1}^{pq}\partial_t\left(g_{p\overline{j}}\alpha^{\overline{j}q}\right)\\
=&\tilde\lambda_{1}^{pq} \left(\dot{u}_{p\overline{j}}+W_{p\overline{j}}^{r}\dot{u}_r+\overline{W_{j\overline{p}}^{r}}\dot{u}_{\overline{r}}\right)\alpha^{\overline{j}q}\nonumber\\
=&W_{1\overline{1}}^{r}\dot{u}_r+\overline{W_{1\overline{1}}^{r}}\dot{u}_{\overline{r}}+\dot{u}_{1\overline{1}}\nonumber\\
=&W_{1\overline{1}}^{r}\left(F^{qq}\nabla_{r}g_{q\overline{q}} -\psi_r\right)
+\overline{W_{1\overline{1}}^{r}}\left(F^{qq}\nabla_{\overline{r}}g_{q\overline{q}}-\psi_{\overline{r}}\right)\nonumber\\
&+F^{rs,ab}\left(\nabla_1g_{r\overline{s}}\right)\left(\nabla_{\overline{1}}g_{a\overline{b}}\right)
+F^{qq}\nabla_{\overline{1}}\nabla_{1}g_{q\overline{q}}-\psi_{1\overline{1}}\nonumber
\end{align}
Thanks to \eqref{dotuk} and  \eqref{dotlambda}, it follows
\begin{align}\label{dotH}
\partial_t\tilde H
=&\frac{\partial_t\tilde{\lambda}_{1}}{\lambda_1} +\phi'( u_r\dot{u}_{\bar r } +
   \dot{u}_{r}u_{\bar r} )  +\varphi' \dot{u},\\
=&\frac{1}{\lambda_1}\left[W_{1\overline{1}}^{r}\left(F^{qq}\nabla_{r}g_{q\overline{q}} -\psi_r\right)
+\overline{W_{1\overline{1}}^{r}}\left(F^{qq}\nabla_{\overline{r}}g_{q\overline{q}}-\psi_{\overline{r}}\right)
+F^{qq}\nabla_{\overline{1}}\nabla_{1}g_{q\overline{q}}-\psi_{1\overline{1}}\right]\nonumber\\
&+\frac{1}{\lambda_1} F^{rs,ab}\left(\nabla_1g_{r\overline{s}}\right)\left(\nabla_{\overline{1}}g_{a\overline{b}}\right)\nonumber\\
&+\phi' u_{\bar r }\left(F^{qq}\nabla_{r}g_{q\overline{q}}-\psi_r\right)
 +\phi' u_r\left(F^{qq}\nabla_{\overline{r}}g_{q\overline{q}}  -\psi_{\overline{r}}\right) +\varphi' \dot{u}.\nonumber
\end{align}
Combining \eqref{hqq}, \eqref{lambda1qq} and \eqref{dotH}, we get
\begin{align}\label{fqqhqqdoth}
0\geq&F^{qq}\tilde H_{q\overline{q}}-\partial_t\tilde H\\
=&-\frac{F^{qq}|\tilde{\lambda}_{1,q}|^2}{\lambda_1^2}+
\frac{1}{\lambda_1}F^{qq}\left(\nabla_{\overline{q}}\nabla_qg_{1\overline{1}}-\nabla_{\overline{1}}\nabla_1g_{q\overline{q}}\right) -\frac{1}{\lambda_1}  F^{rs,ab}\left(\nabla_1g_{r\overline{s}}\right)\left(\nabla_{\overline{1}}g_{a\overline{b}}\right)\nonumber\\
&-\frac{1}{\lambda_1}\left(W_{1\overline{1}}^{r}\left(F^{qq}\nabla_{r}g_{q\overline{q}} -\psi_r\right)
+\overline{W_{1\overline{1}}^{r}}\left(F^{qq}\nabla_{\overline{r}}g_{q\overline{q}}-\psi_{\overline{r}}\right)\right)
+\frac{1}{\lambda_1}\psi_{1\overline{1}}\nonumber\\
 &+ \frac{F^{qq}}{\lambda_1}\left(\sum\limits_{r> 1}\frac{|\nabla_{q}g_{r\overline{1}} |^2+|\nabla_{q}g_{1\overline{r}}|^2}{\lambda_1-\tilde\lambda_{r}}
 -\sum\limits_{r> 1}\frac{\left(\nabla_{p}B_{r}{}^1\right)\left(\nabla_{\overline{q}}g_{1\overline{r}}\right)
 +\left(\nabla_{p}B_{1}{}^r\right)\left(\nabla_{\overline{q}}g_{r\overline{1}}\right)}{\lambda_1-\tilde\lambda_{r}}\right)\nonumber\\
&+ \phi'F^{qq}\Big(u_r\nabla_{\overline{q}}u_{\overline{r}q} + u_{\overline{r}} \nabla_{\overline{q}}\nabla_{q}u_{r}  +
|\nabla_{q}u_{r}|^2 + |u_{\overline{r}q}|^2 \Big) \nonumber \\
& + \phi''F^{qq}|V_q|^2 + \varphi''F^{qq}|u_q|^2 + \varphi' F^{qq}u_{q\overline{q}}  \nonumber \\
&-\phi' u_{\bar r }\left(F^{qq}\nabla_{r}g_{q\overline{q}}-\psi_r\right)
 -\phi' u_r\left(F^{qq}\nabla_{\overline{r}}g_{q\overline{q}}  -\psi_{\overline{r}}\right) -\varphi' \dot{u}.\nonumber
\end{align}
Using \eqref{ricciidentity} implies
\begin{align}\label{qqg11-11gqq}
&\nabla_{\overline{q}}\nabla_qg_{1\overline{1}}-\nabla_{\overline{1}}\nabla_1g_{q\overline{q}}\\
=&\nabla_{\overline{q}}\nabla_q\chi_{1\overline{1}}-\nabla_{\overline{1}}\nabla_1\chi_{q\overline{q}}
 +\nabla_{\overline{q}}\nabla_qu_{1\overline{1}}-\nabla_{\overline{1}}\nabla_1u_{q\overline{q}}
 +\nabla_{\overline{q}}\nabla_qW(u)_{1\overline{1}}-\nabla_{\overline{1}}\nabla_1W(u)_{q\overline{q}}\nonumber\\
=&\nabla_{\overline{q}}\nabla_q\chi_{1\overline{1}}-\nabla_{\overline{1}}\nabla_1\chi_{q\overline{q}}
 -2\mathrm{Re}\left(\overline{T_{q1}^r}\nabla_{1}u_{q\overline{r}}\right)+R_{q\overline{q}1}{}^pu_{p\overline{1}}
 -R_{1\overline{1}q}{}^pu_{p\overline{q}}
 -T_{1q}^p\overline{T_{q1}^r}u_{p\overline{r}} \nonumber\\
 &+\nabla_{\overline{q}}\nabla_qW(u)_{1\overline{1}}-\nabla_{\overline{1}}\nabla_1W(u)_{q\overline{q}}.\nonumber
\end{align}
Since $W_{1\overline{1}}=W_{1\overline{1}}^{r}u_r+\overline{W_{1\overline{1}}^{r}}u_{\overline{r}}$, using the Ricci identity \eqref{ricciidentity}, we can get
\begin{align}
\nabla_{\overline{q}}\nabla_{q}W_{1\overline{1}}
=&u_r\nabla_{\overline{q}}\nabla_{q}W_{1\overline{1}}^r+u_{r\overline{q}}\nabla_{q}W_{1\overline{1}}^r
+\left(\nabla_{\overline{q}} W_{1\overline{1}}^r\right)\left(\nabla_{q}u_r\right)+W_{1\overline{1}}^r\nabla_{\overline{q}}\nabla_{q}u_r\nonumber\\
&+u_{\overline{r}}\nabla_{\overline{q}}\nabla_{q}\overline{W_{1\overline{1}}^r}
+\left(\nabla_{\overline{q}}u_{\overline{r}}\right)\left(\nabla_{q}\overline{W_{1\overline{1}}^r}\right)
+u_{\overline{r}q} \nabla_{\overline{q}} \overline{W_{1\overline{1}}^r}
+\overline{W_{1\overline{1}}^r}\nabla_{\overline{q}} u_{\overline{r}q}\nonumber\\
=&u_r\nabla_{\overline{q}}\nabla_{q}W_{1\overline{1}}^r+u_{r\overline{q}}\nabla_{q}W_{1\overline{1}}^r
+\left(\nabla_{\overline{q}} W_{1\overline{1}}^r\right)\left(\nabla_{q}u_r\right)\nonumber\\
&+W_{1\overline{1}}^r\nabla_ru_{q\overline{q}}-W_{1\overline{1}}^rT_{qr}^pu_{p\overline{q}}+W_{1\overline{1}}^rR_{q\overline{q}r}{}^pu_p\nonumber\\
&+u_{\overline{r}}\nabla_{\overline{q}}\nabla_{q}\overline{W_{1\overline{1}}^r}
+\left(\nabla_{\overline{q}}u_{\overline{r}}\right)\left(\nabla_{q}\overline{W_{1\overline{1}}^r}\right)
+u_{\overline{r}q} \nabla_{\overline{q}} \overline{W_{1\overline{1}}^r}\nonumber\\
&+\overline{W_{1\overline{1}}^r}\nabla_{\overline{r}}u_{q\overline{q}}-\overline{W_{1\overline{1}}^r}\overline{T_{qr}^p}u_{q\overline{p}} \nonumber\\
=&u_r\nabla_{\overline{q}}\nabla_{q}W_{1\overline{1}}^r+u_{r\overline{q}}\nabla_{q}W_{1\overline{1}}^r
+\left(\nabla_{\overline{q}} W_{1\overline{1}}^r\right)\left(\nabla_{q}u_r\right)\nonumber\\
&+W_{1\overline{1}}^r\left(\nabla_rg_{q\overline{q}}-\nabla_r\chi_{q\overline{q}}-\nabla_rW(u)_{q\overline{q}}\right)
-W_{1\overline{1}}^rT_{qr}^pu_{p\overline{q}}+W_{1\overline{1}}^rR_{q\overline{q}r}{}^pu_p\nonumber\\
&+u_{\overline{r}}\nabla_{\overline{q}}\nabla_{q}\overline{W_{1\overline{1}}^r}
+\left(\nabla_{\overline{q}}u_{\overline{r}}\right)\left(\nabla_{q}\overline{W_{1\overline{1}}^r}\right)
+u_{\overline{r}q} \nabla_{\overline{q}} \overline{W_{1\overline{1}}^r}\nonumber\\
&+\overline{W_{1\overline{1}}^r}\left(\nabla_{\overline{r}}g_{q\overline{q}}-\nabla_{\overline{r}}\chi_{q\overline{q}}-
\nabla_{\overline{r}}W(u)_{q\overline{q}}\right)-\overline{W_{1\overline{1}}^r}\overline{T_{qr}^p}u_{q\overline{p}}, \nonumber
\end{align}
which implies
\begin{align}\label{fqqwqq11}
 F^{qq}\nabla_{\overline{q}}\nabla_{q}W_{1\overline{1}}-F^{qq}W_{1\overline{1}}^r \nabla_rg_{q\overline{q}}-F^{qq}\overline{W_{1\overline{1}}^r} \nabla_{\overline{r}}g_{q\overline{q}}
\geq -C\left(\sum\limits_{r} F^{qq}|\nabla_{q}u_r|+\lambda_1\mathcal{F}\right),
\end{align}
where we use the fact that $\lambda_1\gg K>1$ and hence that $|u_{p\overline{q}}|$ can be controlled by $\lambda_1$.

Since
$$
W(u)_{i\overline{j}}=\left(\tr_{\alpha}Z(u)\right)\alpha_{i\overline{j}}-(n-1)Z(u)_{i\overline{j}},
$$
\eqref{daof} implies (as in \cite{stw1503})
\begin{align}\label{fkkaawkk}
F^{qq}\nabla_{\overline{1}}\nabla_{1}W(u)_{q\overline{q}}
\leq C\left(\sum\limits_{r}F^{qq}|\nabla_qu_r|+ |\nabla_{q}g_{1\overline{1}}|+\lambda_1\mathcal{F}\right).
\end{align}
From Young's inequality, it follows that
\begin{align}
&\sum\limits_{r> 1}\frac{|\nabla_{q}g_{r\overline{1}} |^2+|\nabla_{q}g_{1\overline{r}}|^2}{\lambda_1-\tilde\lambda_{r}}
 -\sum\limits_{r> 1}\frac{\left(\nabla_{p}B_{r}{}^1\right)\left(\nabla_{\overline{q}}g_{1\overline{r}}\right)
 +\left(\nabla_{p}B_{1}{}^r\right)\left(\nabla_{\overline{q}}g_{r\overline{1}}\right)}{\lambda_1-\tilde\lambda_{r}}\nonumber\\
\geq&\frac{1}{2}\sum\limits_{r> 1}\frac{|\nabla_{q}g_{r\overline{1}} |^2+|\nabla_{q}g_{1\overline{r}}|^2}{\lambda_1-\tilde\lambda_{r}}-C,\nonumber
\end{align}
where we also use \eqref{6.18}.
Since
$$
(n-1)\lambda_1+\tilde\lambda_r\geq \sum\limits_{i=1}^n\tilde\lambda_i>-1,
$$
we get $(\lambda_1-\tilde\lambda_r)^{-1}\geq (n\lambda_1+1)^{-1}$ for $r>1$. Then Young's inequality gives
\begin{align}\label{lambda1rsab}
&\sum\limits_{r> 1}\frac{|\nabla_{q}g_{r\overline{1}} |^2+|\nabla_{q}g_{1\overline{r}}|^2}{\lambda_1-\tilde\lambda_{r}}
 -\sum\limits_{r> 1}\frac{\left(\nabla_{p}B_{r}{}^1\right)\left(\nabla_{\overline{q}}g_{1\overline{r}}\right)
 +\left(\nabla_{p}B_{1}{}^r\right)\left(\nabla_{\overline{q}}g_{r\overline{1}}\right)}{\lambda_1-\tilde\lambda_{r}}\\
 \geq& \frac{1}{2(n\lambda_1+1)}\sum\limits_{r> 1}\left(|\nabla_{q}g_{r\overline{1}} |^2+|\nabla_{q}g_{1\overline{r}}|^2\right)-C\nonumber\\
 \geq& \frac{1}{4n\lambda_1}\sum\limits_{r> 1}\left(|\nabla_{q}g_{r\overline{1}} |^2+|\nabla_{q}g_{1\overline{r}}|^2\right)-C,\nonumber
\end{align}
where for the second  inequality we use the fact that $\lambda_1>1$.

Using the Ricci identity, we can obtain
\begin{align*}
\overline{T_{q1}^r}\nabla_{1}u_{q\overline{r}}
=&\overline{T_{q1}^r}\nabla_{q}u_{1\overline{r}}-\overline{T_{q1}^r}T_{1q}^pu_{p\overline{r}}\\
=&\overline{T_{q1}^r}\nabla_{q}g_{1\overline{r}}-\overline{T_{q1}^r}\nabla_{q}\chi_{1\overline{r}}
-\overline{T_{q1}^r}\nabla_{q}W(u)_{1\overline{r}}-\overline{T_{q1}^r}T_{1q}^pu_{p\overline{r}},\nonumber
\end{align*}
which implies
\begin{align}\label{reyong}
-\frac{2}{\lambda_1}\mathrm{Re}\left(F^{qq}\overline{T_{q1}^r}\nabla_{1}u_{q\overline{r}}\right)
\geq &-\frac{2 F^{qq}}{\lambda_1}\mathrm{Re}\left(\overline{T_{q1}^r}\nabla_{q}g_{1\overline{r}} \right)
 -C\left(\frac{1}{\lambda_1}\sum\limits_{r}F^{qq}|\nabla_{q}u_r|+ \mathcal{F}\right),
\end{align}
where we use the fact that $|u_{i\overline{j}}|$ can be controlled by $\lambda_1$.
Then we have
\begin{align}\label{resolve}
\frac{\eqref{lambda1rsab}}{\lambda_1}+\eqref{reyong}\geq -\frac{2 F^{qq}}{\lambda_1}\mathrm{Re}\left(\overline{T_{q1}^1}\nabla_{q}g_{1\overline{1}} \right)
 -C\left(\frac{1}{\lambda_1}\sum\limits_{r}F^{qq}|\nabla_{q}u_r|+ \mathcal{F}\right),
\end{align}
where we use that $\mathcal{F}\geq \kappa$ and hence absorb the constant $C$ into $C\mathcal{F}$.

Substituting \eqref{qqg11-11gqq}, \eqref{fkkaawkk}, \eqref{fqqwqq11} and \eqref{resolve} into \eqref{fqqhqqdoth}, it follows that
\begin{align}\label{fqqhqqdoth2}
0\geq&-\frac{F^{qq}|\tilde{\lambda}_{1,q}|^2}{\lambda_1^2}
-\frac{1}{\lambda_1}  F^{rs,ab}\left(\nabla_1g_{r\overline{s}}\right)\left(\nabla_{\overline{1}}g_{a\overline{b}}\right)\\
&  -C\mathcal{F}-\frac{C}{\lambda_1}\left(\sum\limits_r F^{qq}|\nabla_qu_r|+F^{qq}|\nabla_qg_{1\overline{1}}|\right)\nonumber\\
&+ \phi'F^{qq}\Big(u_r\nabla_{\overline{q}}u_{\overline{r}q} + u_{\overline{r}} \nabla_{\overline{q}}\nabla_{q}u_{r}  +
|\nabla_{q}u_{r}|^2 + |u_{\overline{r}q}|^2 \Big) \nonumber \\
& + \phi''F^{qq}|V_q|^2 + \varphi''F^{qq}|u_q|^2 + \varphi' F^{qq}u_{q\overline{q}}  \nonumber \\
&-\phi' u_{\bar r }\left(F^{qq}\nabla_{r}g_{q\overline{q}}-\psi_r\right)
 -\phi' u_r\left(F^{qq}\nabla_{\overline{r}}g_{q\overline{q}}  -\psi_{\overline{r}}\right) -\varphi' \dot{u}.\nonumber
\end{align}
Using the Ricci identity, we can get
\begin{align*}
\phi'F^{qq}u_r\nabla_{\overline{q}}u_{\overline{r}q}
=&\phi'F^{qq}u_r\left(\nabla_{\overline{r}}u_{q\overline{q}}-\overline{T_{qr}^p}u_{\overline{p}q}\right)\\
=&\phi'F^{qq}u_r\left(\nabla_{\overline{r}}g_{q\overline{q}}-\nabla_{\overline{r}}\chi_{q\overline{q}}
-\nabla_{\overline{r}}W(u)_{q\overline{q}}-\overline{T_{qr}^p}u_{q\overline{p}}\right),
\end{align*}
which implies
\begin{align}\label{fenleiqian}
\phi'F^{qq}u_r\left(\nabla_{\overline{q}}u_{\overline{r}q}-\nabla_{\overline{r}}g_{q\overline{q}}\right)
\geq -\frac{C}{K^{1/2}}\left(\sum\limits_{r}|u_{r\overline{q}} |F^{qq}+\sum\limits_{r}|\nabla_qu_r|F^{qq}+\mathcal{F}\right)
\end{align}
where we use  the fact that $\phi'\leq 1/(2K)$. Here we also use that $|u_r|$ can be controlled by $\lambda_1$ and hence can also be controlled by $|u_{p\overline{q}}|$.

Similarly, we can also deduce
\begin{align}
\phi'F^{qq}u_{\overline{r}} \nabla_{\overline{q}}\nabla_{q}u_{r}
=&\phi'F^{qq}u_{\overline{r}}\nabla_{\overline{q}}\left(\nabla_{r}u_{q}-T_{qr}^pu_p\right)\nonumber\\
=&\phi'F^{qq}u_{\overline{r}}\left(\nabla_{r}u_{q\overline{q}}-R_{r\overline{q}q}{}^pu_p
-u_p\nabla_{\overline{q}}T_{qr}^p-T_{qr}^pu_{p\overline{q}}\right)\nonumber\\
=&\phi'F^{qq}u_{\overline{r}}\left(\nabla_{r}g_{q\overline{q}}-\nabla_{r}\chi_{q\overline{q}}-\nabla_{r}W(u)_{q\overline{q}}-R_{r\overline{q}q}{}^pu_p
-u_p\nabla_{\overline{q}}T_{qr}^p-T_{qr}^pu_{p\overline{q}}\right),\nonumber
\end{align}
which implies
\begin{align}\label{fenleiqian1}
\phi'F^{qq}u_{\overline{r}}\left(\nabla_{\overline{q}}\nabla_{q}u_{r}-\nabla_{r}g_{q\overline{q}}\right)
\geq -\frac{C}{K^{1/2}}\left(\sum\limits_{r}|u_{r\overline{q}} |F^{qq}+\sum\limits_{r}|\nabla_qu_r|F^{qq}+\mathcal{F}\right).
\end{align}
From \eqref{fqqhqqdoth2}, \eqref{fenleiqian}, \eqref{fenleiqian1} and the fact that $\phi'\geq 1/(4K)$, we can get
\begin{align}\label{fqqhqqdoth3}
0\geq&-\frac{F^{qq}|\tilde{\lambda}_{1,q}|^2}{\lambda_1^2}
-\frac{1}{\lambda_1}  F^{rs,ab}\left(\nabla_1g_{r\overline{s}}\right)\left(\nabla_{\overline{1}}g_{a\overline{b}}\right)\\
&  -C\left(\mathcal{F} +\lambda_1^{-1}F^{qq}|\nabla_qg_{1\overline{1}}|\right)+\sum\limits_{r}\frac{F^{qq}}{6K}\Big(|\nabla_{q}u_{r}|^2 + |u_{\overline{r}q}|^2 \Big)\nonumber\\
& + \phi''F^{qq}|V_q|^2 +\varphi''F^{qq}|u_q|^2 + \varphi' F^{qq}u_{q\overline{q}} -\varphi' \dot{u},\nonumber
\end{align}
where we use the fact that $K>1$ and hence $K^{-1/2}<1$ and absorb the constant into $C\mathcal{F}$.

Next, we can use the ideas of \cite{stw1503} in the elliptic setting to deal with two cases separately (cf. \cite{houmawu,sz}).

\textbf{Case 1.} Suppose that $\delta\lambda_1\geq -\lambda_n$, where the constant $\delta$ will be determined later. First we define the set
$$
I:=\Big\{i:\;F^{ii}>\delta^{-1}F^{11}\Big\}.
$$
First we remark that for $i\in I$, we have $F^{kk}>F^{11}$ and hence it follows that  $i>1$ and $\lambda_1>\lambda_i$. Using the same discussion as in \cite{stw1503}, from \eqref{fqqhqqdoth3}, we can deduce
\begin{align}\label{3.35}
0\geq&-(1-2\delta)\sum\limits_{q\in I}\frac{F^{qq}\left(|\tilde{\lambda}_{1,q}|^2-|\nabla_1g_{q\overline{1}}|^2\right)}{\lambda_1^2} \\
&  -C\left(\mathcal{F} +\lambda_1^{-1}F^{kk}|\nabla_kg_{1\overline{1}}|\right)+\frac{F^{qq}}{6K}\sum\limits_{r}\Big(|\nabla_{q}u_{r}|^2 + |u_{\overline{r}q}|^2 \Big)\nonumber\\
&-2\varphi'^2\delta^{-1}F^{11}K+\frac{1}{2}\varphi''F^{qq}|u_q|^2 + \varphi' F^{qq}u_{q\overline{q}} -\varphi' \dot{u}.\nonumber
\end{align}
Here we choose  the positive constant $\delta$ small enough so that
\begin{align}\label{deltadefn}
4\delta \varphi'^2\leq \frac{1}{2}\varphi'',
\end{align}
which can be easily obtained from the definition of $\varphi$.

Then we need the first claim whose proof is the same as the one in \cite{stw1503}.

\textbf{Claim.} If $\lambda_1/K$ is sufficiently large compared to $\varphi'$ (i.e., the choices of $D_1$ and $D_2$ which will be determined later), then for any $\varepsilon>0$, there exists a constant $C_{\varepsilon}$ with
\begin{align}\label{claim}
\sum\limits_{q\in I}\frac{F^{qq}|\nabla_{1}g_{q\overline{1}}|^2}{\lambda_1^2}
\geq&\sum\limits_{q\in I}\frac{F^{qq}|\tilde\lambda_{1,q}|^2}{\lambda_1^2}-
 \sum\limits_{r}\frac{F^{qq}}{12K}\left(|u_{\overline{r}q}|^2+|\nabla_{q}u_{r}|^2\right)\\
&+C_{\varepsilon}\varphi'F^{qq}|u_q|^2+\varepsilon C\varphi'\mathcal{F}-C\mathcal{F}.\nonumber
\end{align}
Substituting \eqref{claim} into \eqref{3.35} gives
\begin{align}\label{3.52}
0\geq& \sum\limits_{r}\frac{F^{qq}}{12K}\Big(|\nabla_{q}u_{r}|^2 + |u_{\overline{r}q}|^2 \Big)
+C_{\varepsilon}\varphi'F^{qq}|u_q|^2+\varepsilon C\varphi'\mathcal{F}\\
&  -C\left(\mathcal{F} +\lambda_1^{-1}F^{qq}|\nabla_qg_{1\overline{1}}|\right)-2\varphi'^2\delta^{-1}F^{11}K\nonumber\\
&+\frac{1}{2}\varphi''F^{qq}|u_q|^2 + \varphi' F^{qq}u_{q\overline{q}} -\varphi' \dot{u}.\nonumber
\end{align}
Using \eqref{hq} again implies
\begin{align}\label{lambda1qnqw11use2}
\frac{|\tilde\lambda_{1,q}|}{\lambda_1}
=&\left|\phi'( u_ru_{\overline{r}q} +u_{\overline{r}}\nabla_qu_{r})+\varphi' u_q\right|\\
\leq&\frac{1}{2K^{1/2}}\left(\sum\limits_{p=1}^n\sum\limits_{r=2}^n|\nabla_{r}u_p|\right) +\frac{1}{2K^{1/2}}\sum\limits_{r}|u_{\overline{r}q}|-C_{\varepsilon}\varphi' |u_q|^2-\varepsilon\varphi',\nonumber
\end{align}
where we use Young's inequality and the facts that $\phi'\leq(2K)^{-1}$ and that $\varphi'<0$. This inequality and
\eqref{lambda1p}  imply
\begin{align*}
 F^{qq}\lambda_{1}^{-1}|\nabla_qg_{1\overline{1}}|\leq   \frac{1}{2K^{1/2}}\sum\limits_{r}\left(|u_{\overline{r}q}|+|\nabla_qu_r|\right)-C_{\varepsilon}\varphi' F^{qq}|u_q|^2-\varepsilon\varphi'\mathcal{F}.
\end{align*}
Substituting this into \eqref{3.52} implies
\begin{align}\label{0geq1}
0\geq& \sum\limits_{r}\frac{F^{qq}}{20K}\Big(|\nabla_{q}u_{r}|^2 + |u_{\overline{r}q}|^2 \Big)
+C_{\varepsilon}\varphi'F^{qq}|u_q|^2+\varepsilon C\varphi'\mathcal{F}\\
&  -C \mathcal{F}  -2\varphi'^2\delta^{-1}F^{11}K
 +\frac{1}{2}\varphi''F^{qq}|u_q|^2 + \varphi' F^{qq}u_{q\overline{q}} -\varphi' \dot{u}.\nonumber
\end{align}
Since $W(u)_{i\overline{j}}=\left(\tr_{\alpha}Z(u)\right)\alpha_{i\overline{j}}-(n-1)Z(u)_{i\overline{j}}$, \eqref{zipq} and \eqref{daof} gives
\begin{align}
\sum\limits_{q}F^{qq}W(u)_{q\overline{q}}
=&\sum\limits_{q}\tilde F^{qq}Z(u)_{q\overline{q}}\nonumber\\
=&\sum\limits_{q=1}^n\tilde F^{qq}\left(\sum\limits_{r\not=q}Z_{q\overline{q}}^ru_r\right)
+\sum\limits_{q=1}^n\tilde F^{qq}\left(\sum\limits_{r\not=q}\overline{Z_{q\overline{q}}^r}u_{\overline{r}}\right)\nonumber\\
=&\sum\limits_{r=1}^n\left(\sum\limits_{q\not=r}\tilde F^{qq}Z_{q\overline{q}}^r\right)u_r
+\sum\limits_{r=1}^n\left(\sum\limits_{q\not=r}\tilde F^{qq}\overline{Z_{q\overline{q}}^r}\right)u_{\overline{r}}.\nonumber
\end{align}
This together with \eqref{tildef1fkf1} and \eqref{tildefk} implies
\begin{align*}
\left|\sum\limits_{q}F^{qq}W(u)_{q\overline{q}}\right|\leq C\sum\limits_{q}F^{qq}|u_q|\leq C_{\varepsilon}F^{qq}|u_q|^2+\varepsilon\mathcal{F},
\end{align*}
where in the last step we use   Young's inequality. Then it follows that
\begin{align}\label{varphifqquqq}
\varphi' F^{qq}u_{q\overline{q}}
=&\varphi'F^{qq}\left(g_{q\overline{q}}-\chi_{q\overline{q}}-W(u)_{q\overline{q}}\right)\\
\geq& \varphi'F^{qq}\left(g_{q\overline{q}}-\chi_{q\overline{q}}\right)+C_{\varepsilon}\varphi'F^{qq}|u_q|^2+\varepsilon\varphi'\mathcal{F},\nonumber
\end{align}
where we use the fact that $\varphi'<0$.
Substituting \eqref{varphifqquqq} into \eqref{0geq1} gives
\begin{align}\label{0geq2}
0\geq& F^{11}\left(\frac{\lambda_1^2}{40K} -2\varphi'^2\delta^{-1}K \right)
+\left(C_{\varepsilon}\varphi'+\frac{1}{2}\varphi''\right)F^{qq}|u_q|^2+\varepsilon C\varphi'\mathcal{F}\\
&  -C \mathcal{F}  - \varphi' F^{qq}\left(\chi_{q\overline{q}}-g_{q\overline{q}}\right) -\varphi' \dot{u},\nonumber
\end{align}
where we use the fact that $|u_{1\overline{1}}|\geq \frac{1}{2}\lambda_{1}^2-CK$.

Now we need to use Proposition \ref{prop}.
\begin{enumerate}
\item[(a)] Suppose that we have $F^{qq}\left(\chi_{q\overline{q}}-g_{q\overline{q}}\right)-|\dot u|>\kappa\mathcal{F}$. Then \eqref{0geq2} becomes
\begin{align*}
0\geq& F^{11}\left(\frac{\lambda_1^2}{40K} -2\varphi'^2\delta^{-1}K \right)
+\left(C_{\varepsilon}\varphi'+\frac{1}{2}\varphi''\right)F^{qq}|u_q|^2+\varepsilon C\varphi'\mathcal{F}\\
&  -C \mathcal{F}  - \varphi' \kappa \mathcal{F}.\nonumber
\end{align*}
Choose $\varepsilon >0$ sufficiently small such that $\varepsilon C<\frac{\kappa}{2}$. Next we choose $D_2$ in the definition $\varphi(t)=D_1e^{-D_2t}$ to be large enough with
$$
\frac{1}{2}\varphi''>C_{\varepsilon}|\varphi'|.
$$
Then we arrive at
\begin{align*}
0\geq& F^{11}\left(\frac{\lambda_1^2}{40K} -2\varphi'^2\delta^{-1}K \right)
  -C \mathcal{F}  - \frac{1}{2}\varphi' \kappa \mathcal{F}.
\end{align*}
Now we choose $D_1$ large enough such that $-\frac{1}{2}\kappa \varphi'>C$, then we can get
\begin{align*}
0\geq&  \frac{\lambda_1^2}{40K} -2\varphi'^2\delta^{-1}K,
\end{align*}
which implies the bound of $\lambda_1/K$, as required.
\item[(b)] Suppose that we have $F^{11}>\kappa \mathcal{F}$. With the constants  $D_1$ and $D_2$  determined above, $|\dot u|$ and $\varphi'$ is uniformly bounded and hence can be absorbed in $C\mathcal{F}$. Then using the same arguments in \cite{stw1503}, we can get the uniform upper bound of $\lambda_1/K$ and omit the details here.
\end{enumerate}
\textbf{Case 2.} Suppose that $\delta \lambda_1\leq -\lambda_n$, and all the constants $D_1,\,D_2$ and $\delta$ are fixed as in the previous case. Since in this case we can absorb the uniformly bounded term $-\varphi'\dot u$ into $C\mathcal{F}$, using the same discussion as in \cite{stw1503}, we can get the uniform upper bound of $\lambda_1/K$. Here we omit the details.

\end{proof}
\section{First order estimate}\label{secfirst}
Given the form of the second estimate, we need the first order estimate for $u$.
\begin{thm}\label{1stestimate}
Let $u$ be the solution to \eqref{paragau} on $M\times[0,\,T)$. Then there exists a uniform $C>0$ such that
\begin{equation}
\label{formular1stestimate}\sup\limits_{M\times[0,\,T)}|\nabla u|_{\alpha}^2\leq C.
\end{equation}
\end{thm}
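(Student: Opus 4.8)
The plan is to run a maximum principle argument on the quantity
$$
Q = \log|\nabla u|_{\alpha}^2 - A\tilde u + B t,
$$
or, more precisely (to avoid trouble when $\nabla u$ vanishes and to absorb the second-order term produced by the $-A\tilde u$ correction), on
$$
Q = e^{-A\tilde u}\left(|\nabla u|_{\alpha}^2 + 1\right),
$$
for suitable large uniform constants $A, B>0$ to be chosen. Since $\tilde u$ is uniformly bounded by the Proposition in Section~\ref{secpreliminary}, a bound on $Q$ is equivalent to the desired bound on $|\nabla u|_{\alpha}^2$. I would work at a point $(x_0,t_0)$ where $Q$ attains its maximum on $M\times[0,t']$ for arbitrary $t'<T$, assume $t_0>0$, and choose orthonormal coordinates for $\alpha$ at $x_0$ in which $\tilde g_{i\overline{j}}$ (equivalently $g_{i\overline{j}}$) is diagonal. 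The operator to apply is the linearized parabolic operator: from \eqref{paragau} one computes $\left(\ppt - L\right)u = h - \psi$-type expressions, and more usefully, differentiating \eqref{paragau} in a coordinate direction gives the evolution of $\nabla u$ which feeds into $\left(\ppt - L\right)|\nabla u|_{\alpha}^2$.

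The key steps are: (i) compute $\left(\ppt - L\right)|\nabla u|_{\alpha}^2$; the good term here is the ``gradient of the gradient'' piece, roughly $\Theta^{\overline{j}i}\big(|\nabla_i\nabla_k u|^2 + |\nabla_i\nabla_{\overline k} u|^2\big)$ summed against the positive matrix $\Theta$, which is coercive. The bad terms come from commutators (the torsion $T$ and curvature $R$ of $\alpha$ enter, since $\alpha$ is only Hermitian), from differentiating $\psi$, and — this is the genuinely non-Kähler feature — from the $Z(u)$ term in $L$ and in the equation, which is linear in $\nabla u$ and whose derivative is linear in $\nabla^2 u$; all of these are controlled by $C\big(1 + |\nabla u|_{\alpha}^2 + |\nabla u|_{\alpha}\cdot|\nabla^2 u|_{\alpha}\big)$ times $\operatorname{tr}_{\tilde\omega}\alpha$ or $\mathcal F$-type quantities, and the $|\nabla u|\cdot|\nabla^2 u|$ cross term is absorbed into the good second-order term via Cauchy–Schwarz at the cost of enlarging the coefficient of $|\nabla u|_{\alpha}^2$. (ii) Compute $\left(\ppt - L\right)(-A\tilde u)$; since $\ppt\tilde u = \dot u - (\text{average})$ is bounded and $-L(\tilde u) = -L(u)$ up to the $\Theta^{\overline{j}i}$-trace of $\alpha$-type terms, one gets, crucially, a term $\approx A\,\Theta^{\overline{j}i}\alpha_{i\overline j} = A\operatorname{tr}_{\tilde\omega}\alpha \cdot \tfrac{1}{n-1}((\operatorname{tr}_{\tilde\omega}\alpha)-n)$ — here I should instead isolate that $-A L(\tilde u)$ contains the dominant positive quantity $cA\,\mathcal F$ coming from $-A\Theta^{\overline ji}g_{i\overline j}$ versus $-A\Theta^{\overline ji}(\chi_{i\overline j}+W(u)_{i\overline j})$, using $\sum_k \mu_k \tilde f_k = n$ and the lower bound $\mathcal F \geq \kappa$ from Proposition~\ref{prop}, together with the fact that $\tilde\omega$ appearing in $\tilde g^{\overline j i}$ is uniformly bounded below once $|\lambda|$ has a lower bound. (iii) Combine: at the maximum, $0 \leq \left(\ppt - L\right)Q$, and the $cA\mathcal F$ (or $cA\operatorname{tr}_{\tilde\omega}\alpha$) term must dominate the bad $C(1+|\nabla u|^2)\operatorname{tr}_{\tilde\omega}\alpha$ term once $A$ is chosen large relative to $C$; this forces $|\nabla u|_{\alpha}^2(x_0,t_0) \leq C$, hence $Q \leq C$ everywhere, hence the theorem. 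The term $Bt$ can be dropped or kept depending on whether the $\ppt$ contributions already have a favorable sign; I expect it is unnecessary here since $\tilde u$ and $\dot u$ are both bounded.

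There is one important subtlety compared with the Kähler/Hermitian complex Monge–Ampère case: the coefficient matrix of the equation is $\Theta$, built from $\tilde\omega$ which is $P_\alpha$ applied to $g$, so $\tilde g^{\overline j i}$ is comparable to $\alpha^{\overline j i}$ only after we know $\lambda_1 = |\ddbar u|_\alpha$ is controlled — but Theorem~\ref{2ndestimate} only gives $\lambda_1 \leq CK = C(1+|\nabla u|_\alpha^2)$, i.e., the two estimates are coupled. I would therefore not try to prove Theorem~\ref{1stestimate} in isolation but rather use the blow-up / Liouville-theorem strategy of Sz\'ekelyhidi–Tosatti–Weinkove when the direct maximum principle does not close: assume $\sup|\nabla u|_\alpha^2 = +\infty$, rescale around a sequence of points realizing the supremum (parabolic rescaling in space by $|\nabla u|$ and in time by $|\nabla u|^2$), use Theorem~\ref{2ndestimate} to get uniform $C^2$ (hence, by the complex-Hessian structure and local parabolic Evans–Krylov/Schauder theory referenced later in the paper, uniform $C^{2,\gamma}$) bounds on the rescalings, extract a limit which is an entire solution on $\mathbb C^n$ (or $\mathbb C^n\times(-\infty,0]$) of the corresponding constant-coefficient homogeneous equation with $|\nabla u_\infty|(0)=1$ but $|\nabla u_\infty|\leq 1$ everywhere, and derive a contradiction from a Liouville-type theorem (the torsion and $\psi$ scale away, so the limit equation is a parabolic Monge–Ampère type equation with constant coefficients, to which a Liouville theorem in the spirit of those cited in the abstract applies). \textbf{The main obstacle} is precisely this coupling of the first- and second-order estimates through the non-constant, $u$-dependent coefficient $\Theta$ together with the torsion terms from $Z(u)$: one must either find the right auxiliary function and the right large constant $A$ so that the $\operatorname{tr}_{\tilde\omega}\alpha$-coercivity beats all torsion-generated gradient terms directly, or carry out the blow-up carefully enough that the limiting Liouville theorem is genuinely applicable — verifying that the rescaled $\tilde\omega$ stays uniformly positive (non-collapsed) in the limit is the delicate point.
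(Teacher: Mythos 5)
Your second route --- assume $\sup|\nabla u|_\alpha^2=\infty$, rescale, and appeal to a Liouville theorem --- is indeed what the paper does, but the implementation differs in a way worth flagging. The paper does \emph{not} perform a parabolic rescaling on $\mathbb{C}^n\times(-\infty,0]$: instead it extracts times $t_j\to T$ with $C_j:=|\nabla u(x_j,t_j)|_\alpha\to\infty$, sets $u_j(\cdot):=u(\cdot,t_j)$, and rescales \emph{only in space} via $\Phi_j(z)=C_j^{-1}z+x_j$. The observation that makes this purely spatial rescaling legitimate is Lemma~\ref{lempreliminary}: $\dot u$ is uniformly bounded, so at each fixed time $t_j$ the function $u_j$ satisfies the \emph{elliptic} equation $\tilde\omega_j^n=e^{\dot u_j+\psi}\alpha^n$ with uniformly bounded right-hand side, and the Dinew--Ko{\l}odziej/Tosatti--Weinkove blow-up machinery for the elliptic problem applies verbatim. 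After rescaling, the volume form picks up a factor $C_j^{-2n}$, so the limit $u_\infty$ on $\mathbb{C}^n$ satisfies the \emph{degenerate} equation $P(u_\infty)^n=0$ in the viscosity/current sense and is a maximal $(n-1)$-Psh Lipschitz function; the Liouville theorem actually invoked is Theorem~\ref{ltthm} (Tosatti--Weinkove) for such functions, not a parabolic Liouville theorem for a ``constant-coefficient homogeneous'' parabolic Monge--Amp\`ere equation. Correspondingly, the regularity used to extract the limit is just $L^p$ elliptic estimates for the flat Laplacian plus Sobolev embedding, giving strong $C^{1,\gamma}$ and weak $W^{2,p}_{\mathrm{loc}}$ convergence --- no parabolic Evans--Krylov/Schauder theory is needed at this stage (the $C^{2,\gamma}$ estimates you mention are only proved afterwards, in Lemma~\ref{lemhighorder}, and would anyway produce a circularity if used here since they rely on the first-order bound). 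Your first route (direct maximum principle on $e^{-A\tilde u}(|\nabla u|^2+1)$) is, as you yourself suspect, not the paper's method; it runs into exactly the coupling you identify, since the coercivity constants of $\Theta$ depend on the gradient through Theorem~\ref{2ndestimate}.
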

First we recall some notations from \cite{tw1305}. Let $\beta$ be the Euclidean K\"{a}hler metric on $\mathbb{C}^n,\;n\geq 2$ and $\Delta$ be the Laplace operator with respect to $\beta$. Let $\Omega\subset \mathbb{C}^n$ be a domain. Suppose that
$$
u\longrightarrow\mathbb{R}\cup\{-\infty\}
$$
is an upper semicontinuous function which is in $L^{1}_{loc}(\Omega)$. If
$$
P(u):=\frac{1}{n-1}\left((\Delta u)\beta-\ddbar u\right)\geq 0
$$
as a real $(1,1)$ current, then we will say that $u$ is $(n-1)$-Psh.
\begin{defn}
Let $u$ be a continuous $(n-1)$-Psh function. Then we say that $u$ is \emph{maximal} if for any relatively compact set $\Omega'\subset\subset \Omega$ and any continuous $(n-1)$-Psh function $v$ on a domain $\Omega''$ with $\Omega'\subset\subset \Omega''\subset\subset\Omega$ and $v\leq u$ on $\partial \Omega'$, then we must have $v\leq u$ on $\Omega'$.
\end{defn}
We need the following Liouville type theorem from \cite{tw1305}.
\begin{thm}[Tosatti and Weinkove \cite{tw1305}; 2013]\label{ltthm}
Let $u:\;\mathbb{C}^n\longrightarrow \mathbb{R}$ be an $(n-1)$-Psh function which is Lipschitz continuous, maximal, and satisfies
$$
\sup\limits_{\mathbb{C}^n}(|u|+|\nabla u|)<\infty.
$$
Then $u$ is a constant.
\end{thm}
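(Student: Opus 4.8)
The plan is to show that $u$ is pluriharmonic, i.e. $\ddbar u\equiv 0$ on $\mathbb{C}^n$; the global boundedness of $u$ will then force it to be constant. First observe that, since $P(u):=\frac{1}{n-1}\big((\Delta u)\beta-\ddbar u\big)\geq 0$, taking $\tr_\beta$ gives $\Delta u\geq 0$, so $u$ is subharmonic; this alone does not imply constancy for $n\geq 2$, which is exactly why the maximality and the global $C^1$ bound are needed. Recall that a continuous maximal $(n-1)$-Psh function is characterized by the degenerate Monge--Amp\`ere type equation $\det P(u)=0$ (in the pluripotential, equivalently viscosity, sense) --- the $(n-1)$-Psh counterpart of $(\ddbar v)^n=0$ for maximal plurisubharmonic $v$ --- and on any ball $B_R=B_R(0)\subset\mathbb{C}^n$ it is the decreasing limit, as $\varepsilon\downarrow 0$, of the smooth $(n-1)$-Psh solutions $u_\varepsilon$ of $\det P(u_\varepsilon)=\varepsilon$ on $B_R$ with $u_\varepsilon=u$ on $\partial B_R$.

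Next I would apply, on $B_R$, the interior $C^{1,1}$ (Pogorelov-type) estimate for this equation --- the local form of the second order estimate behind Theorem \ref{2ndestimate}, carried out in the elliptic setting as in \cite{tw1305} (cf. \cite{stw1503}) --- using a cutoff $\eta\in C_c^\infty(B_R)$ with $\eta\equiv 1$ on $B_{R/2}$, $|\nabla\eta|_\beta\leq C/R$ and $|\ddbar\eta|_\beta\leq C/R^2$. Because $u$ is globally bounded with globally bounded gradient, the quantities entering the estimate ($\mathrm{osc}_{B_R}u\leq 2\sup_{\mathbb{C}^n}|u|$, $\sup_{B_R}|\nabla u|_\beta^2\leq\sup_{\mathbb{C}^n}|\nabla u|_\beta^2$, and the \emph{spatially constant} right hand side $\log\varepsilon$, whose spatial derivatives are absent) are controlled independently of $R$, while the cutoff contributes the decisive factor $R^{-2}$; after letting $\varepsilon\downarrow 0$ this gives
\[
\sup_{B_{R/2}}|\ddbar u|_\beta\leq\frac{C}{R^2},
\]
with $C$ depending only on $n$, $\sup_{\mathbb{C}^n}|u|$ and $\sup_{\mathbb{C}^n}|\nabla u|_\beta$. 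Letting $R\to\infty$ yields $\ddbar u\equiv 0$ on $\mathbb{C}^n$; thus $u=\mathrm{Re}\,f$ for an entire holomorphic $f$, and since $|e^{f}|=e^{u}$ is bounded, $e^{f}$, hence $f$, hence $u$, is constant.

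The hard part is the estimate in the second step. The equation $\det P(u)=0$ is degenerate elliptic and $u$ is a priori only Lipschitz, so one must check that the $C^{1,1}$ estimate of \cite{stw1503,tw1305} localizes on a ball with a constant that remains \emph{uniform as the right hand side degenerates} ($\varepsilon\downarrow 0$) and that scales like $R^{-2}$; a naive transcription of the compact-manifold estimate would produce a constant depending on $|\log\varepsilon|$, which is useless. What rescues the argument is precisely that the approximating equations have \emph{constant} right hand side $\log\varepsilon$, so the terms in the test-function computation that would involve derivatives of the right hand side drop out, and one exploits the concavity of $\log\det$ exactly as in the proof of Theorem \ref{2ndestimate} to obtain an $\varepsilon$-uniform bound before passing to the limit. (An alternative endgame, once $u$ is known to be pluriharmonic off the support of $\Delta u$, combines Hartogs extension on $\mathbb{C}^n$ with $n\geq 2$ and the comparison property built into maximality; but controlling that support again requires the interior estimate above.)
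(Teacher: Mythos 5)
There is a genuine gap, and it sits exactly where you yourself locate ``the hard part.'' Note first that this paper does not prove Theorem \ref{ltthm} at all: it is quoted from \cite{tw1305}, where it is established by a pluripotential-style comparison argument in the spirit of Dinew--Ko{\l}odziej \cite{DK}, using maximality directly through the comparison property stated just before Theorem \ref{ltthm} --- not through any interior second order estimate. Your route hinges on the claim $\sup_{B_{R/2}}|\ddbar u|_{\beta}\leq C R^{-2}$ with $C$ depending only on $n$, $\sup_{\mathbb{C}^n}|u|$ and $\sup_{\mathbb{C}^n}|\nabla u|$, obtained by ``localizing'' the estimate behind Theorem \ref{2ndestimate}. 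This is unjustified, and its claimed scaling is inconsistent with the form that estimate actually takes: all known second order bounds for this operator (Theorem \ref{2ndestimate}, \cite{stw1503,tw1305}) read $|\ddbar u|\leq CK$ with $K=1+\sup|\nabla u|^2$. Put $\tilde u(z):=u(Rz)$ on $B_1$; then $\ddbar\tilde u=R^2(\ddbar u)(R\,\cdot)$ while $\sup|\nabla\tilde u|^2=R^2\sup|\nabla u|^2$, so even a perfect interior version of that estimate on the unit ball only returns
\begin{equation*}
\sup_{B_{R/2}}|\ddbar u|_{\beta}\leq C\left(R^{-2}+\sup_{\mathbb{C}^n}|\nabla u|_{\beta}^2\right),
\end{equation*}
i.e.\ the gradient term exactly cancels the $R^{-2}$ gain: you get boundedness of $\ddbar u$, from which constancy does not follow, rather than vanishing. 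To obtain your decay you would need an interior $C^{1,1}$ estimate depending on $\sup|u|$ alone, i.e.\ a second order bound from $C^0$ data for a degenerate Monge--Amp\`ere type equation; no such estimate is known (Pogorelov-type examples warn that purely interior $C^2$ estimates for Monge--Amp\`ere operators without boundary control are delicate), and its absence is precisely why the gradient bound in Section \ref{secfirst} of this paper is obtained by blow-up plus the Liouville theorem rather than the other way around --- if your estimate were available, that whole machinery would be superfluous.

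A second, smaller gap: the setup you ``recall'' --- that continuous maximal $(n-1)$-Psh functions are exactly the weak solutions of $\det P(u)=0$, and that on a ball $u$ is the decreasing limit of smooth solutions $u_{\varepsilon}$ of $\det P(u_{\varepsilon})=\varepsilon$ with boundary value $u$ --- presupposes a Bedford--Taylor type pluripotential (or viscosity) theory for $P$ together with a solvable Dirichlet problem with merely Lipschitz data, interior smoothness and a comparison principle, none of which is available off the shelf and none of which is needed in \cite{tw1305}, where maximality is used only through the comparison definition. Your endgame (pluriharmonic plus bounded implies constant) is fine, but the core of the argument is missing.
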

Using the idea of Dinew and Ko{\l}odziej \cite{DK} and the Liouville type Theorem \ref{ltthm}, the argument is identical to \cite{gill1410} which can be obtained by modifying the argument of  \cite{tw1305} in the elliptic setting. Hence, we can be brief and just point out the main differences.
\begin{proof}[Proof of Theorem \ref{1stestimate}]
Suppose that \eqref{formular1stestimate} does not hold. Then there exists a sequence $(x_j,t_j)\in M\times[0,T)$ with $t_j\longrightarrow T$ such that
\begin{align*}
\lim_{j\rightarrow\infty}|\nabla u(x_j,t_j)|_{\alpha}=+\infty.
\end{align*}
Without loss of generality, we assume that
\begin{align*}
C_j:=|\nabla u(x_j,t_j)|_{\alpha}=\sup\limits_{(x,t)\in M\times[0,t_j]}|\nabla u(x,t)|_{\alpha}\longrightarrow +\infty,\quad\text{as}\quad j\longrightarrow \infty
\end{align*}
and
$
\lim\limits_{j\rightarrow \infty}x_j=x.
$
Fix holomorphic coordinates $z=(z^1,\cdots,z^n)$ centered at $x$, i.e., $z(x)=0$, and $\alpha(x)=\beta$ identifying with $B_2(0)$.
Also assume that $j$ large enough so that $x_j\in B_1(0)$. Define
\begin{align*}
u_j:\;&M\longrightarrow \mathbb{C},\quad\text{given by}\quad u_j(y):=u(y,t_j),\\
\Phi_j:\;&\mathbb{C}^n\longrightarrow \mathbb{C}^n,\quad\text{given by}\quad \Phi_j(z):=C_{j}^{-1}z+x_j=:w,\\
\hat u_j:\;&\mathbb{C}^n\supset B_{C_j}(0)\longrightarrow \mathbb{C},\quad\text{given by}\quad\hat u_j(z):=u_j\circ \Phi_j(z)=u_j\left(C_{j}^{-1}z+x_j\right).
\end{align*}
Note that for convenience, we confuse $z(x)$ and $z$ by the local coordinates. The function $\hat u_j$ satisfies
$$
\sup_{B_{C_j}(0)}|\hat u_j|\leq C\quad \text{and}\quad \sup_{B_{C_j}(0)}|\nabla \hat u_j|_{\beta}\leq CC_j^{-1}|\nabla u_j|_{\alpha}(x_j+C_j^{-1}z)\leq C,
$$
where we use the fact that the Euclidean metric $\beta$ is equivalent to the Hermitian metric $\alpha$ in $B_2(0)$. In particular,
we have
\begin{align}\label{7.2}
\sup_{B_{C_j}(0)}|\nabla \hat u_j|_{\beta}(0)\geq C^{-1}C_{j}^{-1}|\nabla u_j|_{\alpha}(x_j)=C^{-1}>0.
\end{align}
Thanks to Theorem \ref{2ndestimate}, we know that
$$
|\ddbar \hat u_j|_{\beta}\leq CC_{j}^{-2}|\ddbar   u_j|_{\alpha}\leq C'.
$$
Then the elliptic estimate for $\Delta$ and Sobolev embedding gives that for each compact $K\subset\mathbb{C}^n$, each $\gamma\in(0,1)$ and $p>1$, there exists a constant $C$ with
$$
\|\hat u_j\|_{C^{1,\gamma}(K)}+\|\hat u_j\|_{W^{2,p}(K)}\leq C.
$$
Therefore, there exists a subsequence of $\hat u_j$ converges strongly in $C^{1,\gamma}(\mathbb{C}^n)$ and weakly in $W^{2,p}_{\mathrm{loc}}(\mathbb{C}^n)$ to a function $u\in W^{2,p}_{\mathrm{loc}}(\mathbb{C}^n)$ with $\sup_{\mathbb{C}^n}(|u|+|\nabla u|_{\beta})\leq C$
and \eqref{7.2} implies that  $|\nabla u|_{\beta}\neq 0$. In particular, $u$ is not a constant.

Note that (see \cite{tw1305})
\begin{align}
\label{betaj}
\beta_j:=&C_{j}^2\Phi_{j}^{\ast}\alpha=\mn\alpha_{p\overline{q}}(x_j+C_j^{-1}z)\md z^p\wedge\md\overline{z}^q\longrightarrow \alpha (x)=\beta,\\
\label{7.6}&\Phi_{j}^{\ast}\alpha=C_{j}^{-2}\mn\alpha_{p\overline{q}}(x_j+C_j^{-1}z)\md z^p\wedge\md\overline{z}^q\longrightarrow  0\\
\label{7.7}
&\Phi_{j}^{\ast}\varpi=C_{j}^{-2}\mn\varpi_{p\overline{q}}(x_j+C_{j}^{-1}z,t)\md z^p\wedge\md \overline{z}^q\longrightarrow 0,
\end{align}
smoothly on compact sets of $\mathbb{C}^n$.
Also it is easy to get
\begin{align}\label{phizu}
\Phi_{j}^{\ast}Z(u_j)=&\mn\left( Z_{p\overline{q}}^ru_{j,r}
+\overline{Z_{q\overline{p}}^r}u_{j,\overline{r}}\right)(x_j+C_{j}^{-1}z)C_{j}^{-2}\md z^p\wedge\md \overline{z}^q\\
=&\mn\left( Z_{p\overline{q}}^r(x_j+C_{j}^{-1}z)\hat{u}_{j,r}(z)
+\overline{Z_{q\overline{p}}^r}(x_j+C_{j}^{-1}z)\hat{u}_{j,\overline{r}}(z)\right) C_{j}^{-1}\md z^p\wedge\md \overline{z}^q\longrightarrow  0\nonumber
\end{align}
uniformly   on compact sets of $\mathbb{C}^n$.
  Then we can deduce
\begin{align}
\label{7.4}\Phi_{j}^{\ast}\left((\Delta_{\alpha}u_j)\alpha\right)
=&\mn(\Delta_{\beta_j}\hat u_j)\beta_j\longrightarrow (\Delta u)\beta\quad \text{(see \cite{stw1503})}\\
\label{7.5}\Phi_{j}^{\ast}\ddbar u_j
=&\mn\frac{\partial^2 u_j}{\partial w^k\partial\overline{w}^{\ell}}(x_j+C^{-1}z)C_{j}^{-2}\md z^p\wedge\md \overline{z}^q\\
=&\mn\frac{\partial^2 \hat{u}_j}{\partial z^p\partial\overline{z}^{q}}(z) \md z^p\wedge\md \overline{z}^q\nonumber\\
=&\ddbar\hat{u}_j\longrightarrow \ddbar u\nonumber
\end{align}
weakly in $L_{\mathrm{loc}}^{p}(\mathbb{C}^n)$ of the coefficients. In particular, we have
\begin{align*}
\Phi_{j}^{\ast}\tilde\omega_j:=&\Phi_{j}^{\ast}\tilde\omega(t_j)=\Phi_{j}^{\ast}\left(\varpi(t_j)+\frac{1}{n-1}\left((\Delta_{\alpha}u_j)\alpha-
\ddbar u_j\right)+Z(u_j)\right)\\
&\longrightarrow\frac{1}{n-1}\left((\Delta u)\beta-\ddbar u\right)= P(u)
\end{align*}
weakly as currents. Since $\Phi_{j}^{\ast}\tilde\omega_j>0$, it follows that $P(u)\geq0$ as currents. From Lemma \ref{lempreliminary}, it follows that functions $\dot{u}_j\circ\Phi_j$ are uniformly bounded. Hence, we have
\begin{align}\label{tildeomegan}
\Phi^{\ast}\tilde\omega_{j}^n=&e^{\dot{u}_j\circ\Phi_j}\Phi_{j}^{\ast}\Omega\\
=&\frac{(\mn)^n}{C_{j}^{2n}}e^{\dot{u}_j\circ\Phi_j+\psi\circ\Phi_j}(\det\alpha)(x_j+C_{j}^{-1}z)\md z^1\wedge\md\overline{z}^1\wedge\cdots\wedge\md z^n\wedge\md \overline{z}^n\longrightarrow 0\nonumber
\end{align}
uniformly on compact sets of $\mathbb{C}^n$. From \eqref{betaj} and \eqref{7.4}, we have,
for any compact set $K\subset\mathbb{C}^n$,
$$
\sup_K|\Delta_{\beta_j}\hat u_j-\Delta\hat u_j|\longrightarrow 0,\quad\text{as}\quad j\longrightarrow \infty.
$$
From \eqref{betaj},  \eqref{7.6},  \eqref{phizu}, \eqref{7.4} and \eqref{7.5}, we have
\begin{align}\label{pujphitildeomega}
&\sup_K |P(\hat{u}_j)-\Phi_j^{\ast}\tilde{\omega}_j|_{\beta}\\
=& \sup_K\left|\frac{1}{n-1}\left((\Delta_\beta \hat{u}_j)\beta-(\Delta_{\beta_j}\hat{u}_j)\beta_j\right) -\Phi_j^{\ast}\varpi-\Phi_j^{\ast} Z(u_j)\right|_{\beta}\nonumber\\
\leq& \sup_K\left( \frac{1}{n-1}|(\Delta_\beta \hat{u}_j)\beta-(\Delta_{\beta_j}\hat{u}_j)\beta_j|_\beta
+|\Phi_j^{\ast}\varpi|_{\beta}+|\Phi_j^{\ast} Z(u_j)|_{\beta}\right)\nonumber
\end{align}
converges to zero as $j\longrightarrow \infty$. Noting that $P(\hat{u}_j)$ and $\Phi_j^{\ast}\tilde{\omega}_j$ are locally uniformly bounded, \eqref{tildeomegan} and \eqref{pujphitildeomega} implies that $P(\hat u_j)^n$  converges to zero uniformly on compact sets of $\mathbb{C}^n$, since we have $$
P(\hat u_j)^n-\Phi_j^{\ast}\tilde{\omega}_j^n=P(\hat u_j)^n-\left(\Phi_j^{\ast}\tilde{\omega}_j\right)^n
=\left(P(\hat u_j)-\Phi_j^{\ast}\tilde{\omega}_j\right)\sum\limits_{r=0}^{n-1}\left(P(\hat u_j)^r\wedge(\Phi_j^{\ast}\tilde{\omega}_j)^{n-1-r}\right).
$$
Then use the arguments in \cite{tw1305}, we know that $u$ is maximal. From Theorem \ref{ltthm}, we know that $u$ is a constant, a contradiction to $\nabla u (0)\neq 0$.
\end{proof}
\section{Proof of the uniqueness and long time existence of the main theorem}
\label{proofofuniqueandlong}
In this section, we will give the proof of the uniqueness and long time existence of Theorem \ref{mainthm}. To get this, we need the following lemma.
\begin{lem}
\label{lemhighorder}
Let   $u$ be the solution to \eqref{paragau} on $M\times[0,\,T)$. Then for any $\epsilon \in (0,\,T)$ and every positive integer $k$, there is a constant $C_k$, depending only on $k,\,\epsilon$ and the initial data on $M$ such that
\begin{align}
\sup\limits_{M\times[\epsilon,\,T)}|\nabla^ku(x,\,t)|\leq C_k.
\end{align}
\end{lem}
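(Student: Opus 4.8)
The plan is a standard two-stage argument: first upgrade the already-established low-order bounds to a uniform $C^{2,\gamma}$ bound in parabolic Hölder norm, then bootstrap by Schauder theory. I would begin by collecting what is available on all of $M\times[0,T)$: Lemma~\ref{lempreliminary} gives $|\dot u|\le C$, the proposition following it gives $\|\tilde u\|_{C^0}\le C$, and Theorems~\ref{2ndestimate} and \ref{1stestimate} together give $|\nabla u|_\alpha+|\ddbar u|_\alpha\le C$, all with $C$ depending only on the initial data. Since $u$ and $\tilde u$ differ by a function of $t$ alone, it suffices to bound $\nabla^k\tilde u$, and the $C^0$ bound on $\tilde u$ is what feeds into Schauder estimates later. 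From \eqref{tildeomega} and \eqref{zu} the coefficients $\tilde g_{i\overline j}$ are then bounded above, and since $\tilde\omega^n=e^{\psi+\dot u}\alpha^n$ has right-hand side bounded above and below, the eigenvalues of $\tilde\omega$ relative to $\alpha$ are pinched between two positive constants. Equivalently, the linearized operator $L$ of \eqref{Ldefn} is uniformly parabolic on $M\times[0,T)$ with ellipticity constants controlled by the initial data.

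Next I would write \eqref{paragau} as $\dot u=G(\ddbar u,\nabla u,x)$, where in the notation of Section~\ref{secsecond} one has $G=\log\det\tilde B-\log\det\alpha-\psi$. Because $\ddbar u\mapsto\tilde g_{i\overline j}$ is affine and $\log\det$ is concave and increasing on positive Hermitian matrices, $G$ is concave and (given the uniform parabolicity from the first step) uniformly elliptic in $\ddbar u$; the extra term $Z(u)$, which is linear in $\nabla u$ with smooth coefficients, together with the torsion of $\alpha$, contributes only lower-order perturbations once the $C^1$ and $C^2$ bounds are in hand. I would then apply the interior parabolic version of the complex Evans--Krylov estimate — as used in the flow setting in the spirit of \cite{gill}, and resting on the complex $C^{2,\gamma}$ estimates of \cite{tw1305,stw1503} — on parabolic cylinders $B_r(x_0)\times(t_0-r^2,t_0]$ with $t_0\ge\epsilon$ and $r^2\le\epsilon/2$, all of which lie inside $M\times[\epsilon/2,T)$. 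This yields $\|u\|_{C^{2,\gamma}(M\times[\epsilon/2,T))}\le C$ for some $\gamma\in(0,1)$, with $C$ depending on $\epsilon$ and the initial data.

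Finally I would bootstrap. Differentiating $\dot u=G(\ddbar u,\nabla u,x)$ in an arbitrary spatial direction shows that any first-order derivative $v$ of $u$ solves a uniformly parabolic linear equation of the form $\partial_t v=a^{\overline j i}\partial_i\partial_{\overline j}v+b^i\partial_i v+\overline b^i\partial_{\overline i}v+c$, whose coefficients $a,b,c$ are smooth functions of $\ddbar u,\nabla u,x$ and therefore lie in the parabolic Hölder class $C^{\gamma}$ by the previous step, while $v$ itself is bounded (using the $C^0$ bound on $\tilde u$). Parabolic Schauder estimates then give $v\in C^{2,\gamma}$ on a slightly smaller time interval, i.e.\ $u\in C^{3,\gamma}$, and iterating this — differentiating $k-2$ further times, each time observing that the newly generated terms are of lower order with $C^\gamma$ coefficients, and shrinking the time interval by a fixed fraction at each of the finitely many steps — produces $\|u\|_{C^{k,\gamma}(M\times[\epsilon,T))}\le C_k$ for every $k$, with $C_k$ depending only on $k,\epsilon$ and the initial data. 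Since $\nabla^k u=\nabla^k\tilde u$ for $k\ge1$, this is exactly the asserted bound.

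I expect the genuine obstacle to be the second step: establishing the complex Evans--Krylov $C^{2,\gamma}$ estimate in the non-Kähler Hermitian, parabolic setting, where one must control the torsion terms of the background Gauduchon metric $\alpha$ and the gradient dependence encoded in $Z(u)$. However, once the a priori $C^2$ bound of Theorems~\ref{2ndestimate}--\ref{1stestimate} is available, these contributions are truly lower order, so the estimate reduces to the known complex Evans--Krylov result, and I would invoke it rather than reprove it; the Schauder bootstrap of the third step is then entirely routine.
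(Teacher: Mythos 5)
Your proposal is correct and follows essentially the same route as the paper: use the uniform $C^0(\tilde u)$, $C^1$, and $C^2$ bounds already established together with the uniform bound on $\dot u$ to verify uniform parabolicity, invoke a complex parabolic Evans--Krylov $C^{2,\gamma}$ estimate (the paper cites Chu's results in \cite{chujianchuncvpde,chu1607} and \cite{TWWY} rather than your \cite{gill,tw1305,stw1503}, but the substance is the same), and then bootstrap via parabolic Schauder on nested cylinders in $M\times[\epsilon,T)$. The only slip is attributing the boundedness of the differentiated solution $v=\partial u$ to the $C^0$ bound on $\tilde u$ rather than to the first-order estimate of Theorem~\ref{1stestimate}, but this is cosmetic and does not affect the argument.
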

\begin{proof}
Thanks to Theorem \ref{2ndestimate} and Theorem \ref{1stestimate}, we can deduce a priori $C^{2,\gamma}$ estimates of $u$ from \cite[Theorem 5.3]{chujianchuncvpde} and the discussion in the proof of \cite[Lemma 6.1]{chu1607} directly (cf. \cite{TWWY}).  Higher order estimates can be obtained after differentiating the equation and applying the usual bootstrapping method.
\end{proof}
Then we can prove the long time existence and uniqueness of the solution in the main theorem.
\begin{proof}[Proof of the uniqueness and long time existence of Theorem \ref{mainthm}]
The uniqueness is the result of standard parabolic equation theory. Suppose that $T<\infty$. From Lemma \ref{lempreliminary}, there exists a uniform constant $C$ such that
\begin{align}
|u(x,t)|\leq |u_0(x)|+T\sup\limits_{M\times [0,\,T)}|\dot u|\leq C(T+1)<\infty.
\end{align}
This together with Lemma \ref{lemhighorder} and short time existence theorem implies that we can extend the solution to \eqref{paragau} on $[0,T+\varepsilon)$ with $\varepsilon>0$, absurd. We can find more details about this standard discussion in the proof of \cite[Theorem 3.1]{tosattikawa} (cf.\cite{songweinkovekrf,weinkovekrf}).
\end{proof}
\section{The Harnack inequality}
\label{harnack}
In this section we consider the Harnack inequalities of the parabolic equation
\begin{equation}\label{parahar}
\ppt \varphi=L(\varphi)
\end{equation}
where $L$ is defined as in \eqref{Ldefn}, which are analogs to Li and Yau \cite{LY}. Note that Cao \cite{Cao} stated this result on K\"{a}hler manifolds and Gill \cite{gill} also proved it on Hermitian manifolds (cf. \cite{chu1607,W}). This is another necessary preparation for the proof of our main theorem. For convenience, we give a lemma which can be easily obtained from Lemma \ref{lemhighorder} as follows.
\begin{lem}
\label{lemhighorder2}
Let $u$ be the solution to \eqref{paragau} on $M\times[0,\,\infty)$. Then for every positive integer $k$, there is a constant $C_k$, depending only on $k$ and the initial data on $M$ such that
\begin{equation*}
\sup\limits_{M\times[0,\,\infty)}|\nabla^ku(x,\,t)|\leq C_k.
\end{equation*}
\end{lem}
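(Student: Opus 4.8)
The plan is to reduce the statement to Lemma \ref{lemhighorder}, which already gives, once we know $T=\infty$, uniform bounds for $\nabla^k u$ on $M\times[\epsilon,\infty)$ for every fixed $\epsilon>0$, and then to handle the remaining initial slab $M\times[0,\epsilon]$ separately using the smoothness of the initial datum $u_0\in C^\infty(M,\mathbb{R})$. In other words, the only gap between Lemma \ref{lemhighorder2} and Lemma \ref{lemhighorder} is the loss of control as $\epsilon\to 0$, and this is removed by parabolic regularity up to the initial time for smooth data.

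Concretely, I would first fix a positive integer $k$ and invoke the long time existence part of Theorem \ref{mainthm}, already proved in Section \ref{proofofuniqueandlong}, so that $u$ is defined and smooth on all of $M\times[0,\infty)$. Applying Lemma \ref{lemhighorder} with $T=\infty$ and the fixed value $\epsilon=1$ yields a constant $C_k'$, depending only on $k$ and the initial data, with $\sup_{M\times[1,\infty)}|\nabla^k u|\leq C_k'$. Next I would bound $\nabla^k u$ on the compact set $M\times[0,1]$. Since $u_0,\psi\in C^\infty(M,\mathbb{R})$ and \eqref{paragau} is uniformly parabolic near $t=0$ (its linearization $L$ in \eqref{Ldefn} is a uniformly elliptic operator along the flow on a short time interval), the short time existence theory used in Section \ref{proofofuniqueandlong}, combined with parabolic Schauder estimates and the usual bootstrapping, shows that $u$ is smooth on $M\times[0,\tau]$ up to and including $t=0$, with $\|u\|_{C^k(M\times[0,\tau])}$ controlled by a constant depending only on $k$ and the initial data, where $\tau>0$ also depends only on the initial data. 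Combining this with Lemma \ref{lemhighorder} applied with $\epsilon=\tau/2$ covers $M\times[0,1]$ and produces a constant $C_k''$. Taking $C_k:=\max\{C_k',C_k''\}$ then gives the claim.

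The main point requiring care — rather than a genuine difficulty — is to make sure the constant on the initial slab depends only on $k$ and the initial data, and not on the particular solution; this is exactly what the quantitative form of short time existence and regularity provides (existence time and $C^k$ bounds depending only on $\|u_0\|_{C^m}$ for a suitable $m=m(k)$ and on the fixed background geometry of $(M,\alpha_0,\alpha,\psi)$). All the large-$t$ content is already contained in Lemma \ref{lemhighorder}, so apart from this bookkeeping there is nothing further to prove.
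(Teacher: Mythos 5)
Your proposal is correct: the split into the tail $M\times[1,\infty)$ (handled by Lemma \ref{lemhighorder} with $T=\infty$, $\epsilon=1$) and the initial slab $M\times[0,1]$ is exactly the content that the paper leaves implicit when it says Lemma \ref{lemhighorder2} is ``easily obtained'' from Lemma \ref{lemhighorder}, and the long time existence in Section \ref{proofofuniqueandlong} supplies the $T=\infty$ input.

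One remark: the quantitative short-time existence and Schauder machinery you invoke on the initial slab is more than is strictly needed. Once the uniqueness and long time existence parts of Theorem \ref{mainthm} are established, the solution $u$ is a single smooth function on $M\times[0,\infty)$, hence on the compact set $M\times[0,1]$ every $|\nabla^k u|$ attains a finite maximum; by uniqueness this maximum is a function of $k$ and the initial data alone, which is all the lemma asserts. Your more explicit route (bounding the existence time $\tau$ and the $C^k$ norm on $[0,\tau]$ quantitatively by $\|u_0\|_{C^m}$ and the background) is also valid and makes the dependence transparent, but the compactness-plus-uniqueness observation already closes the gap. Also note a small redundancy in your write-up: once you have bounds on $[0,\tau]$ and apply Lemma \ref{lemhighorder} with $\epsilon=\tau/2$ you already cover all of $[0,\infty)$, so the preliminary $\epsilon=1$ application is superfluous.
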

Then we give our Harnack inequality as follows.
\begin{lem}
\label{harnacklem1}
Assume that $\varphi$ is a positive solution to \eqref{parahar} and define $f=\log \varphi$
and $F=t\left(|\partial f|^2-\alpha f_t\right)$. There holds
\begin{align}
\label{LF-Ft}
L(F)-F_t\geq \frac{1}{2n}\left(|\partial f|^2-f_t\right)^2
-2\mathrm{Re}\left\langle \partial f, \partial F\right\rangle
-\left(|\partial f|^2-\alpha f_t\right)
-Ct|\partial f|^2-Ct,
\end{align}
where the definition of $|\partial f|^2$ can be found in the proof.
\end{lem}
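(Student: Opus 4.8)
The plan is to reduce \eqref{LF-Ft} to a Bochner type pointwise estimate for $f:=\log\varphi$, in the spirit of Li--Yau \cite{LY} and of the arguments of Cao \cite{Cao} and Gill \cite{gill}, now for the operator $L$ of \eqref{Ldefn}. Since $Z$ is linear in the gradient, $Z(\varphi)=Z(e^{f})=e^{f}Z(f)$, so dividing \eqref{parahar} by $\varphi$ gives
\begin{equation*}
f_{t}=L(f)+|\partial f|^{2},\qquad |\partial f|^{2}:=\Theta^{\overline{j}i}f_{i}f_{\overline{j}},
\end{equation*}
which is the meaning of $|\partial f|^{2}$; writing $\Delta_{\Theta}f:=\Theta^{\overline{j}i}f_{i\overline{j}}$ this reads $\Delta_{\Theta}f=f_{t}-|\partial f|^{2}-\tr_{\tilde\omega}Z(f)$. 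Put $G:=|\partial f|^{2}-\alpha f_{t}$ (with $\alpha$ the constant of the statement), so that $F=tG$, $\partial F=t\,\partial G$ and
\begin{equation*}
L(F)-F_{t}=t\bigl(L(G)-G_{t}\bigr)-G;
\end{equation*}
hence it suffices to prove the pointwise bound
\begin{equation*}
L(G)-G_{t}\ \geq\ \tfrac{1}{2n}\bigl(|\partial f|^{2}-f_{t}\bigr)^{2}-2\,\mathrm{Re}\langle\partial f,\partial G\rangle-C\bigl(|\partial f|^{2}+1\bigr),
\end{equation*}
where $\langle\cdot,\cdot\rangle$ is the Hermitian pairing induced by $\Theta^{\overline{j}i}$ and $C$ depends only on the fixed geometry, and then multiply by $t\ge0$.

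First I differentiate $f_{t}=L(f)+|\partial f|^{2}$ in $t$. The coefficients $Z^{k}_{i\overline{j}}$ of \eqref{zu} depend only on the fixed metric $\alpha$, so $\partial_{t}$ commutes with $Z$, and one gets
\begin{equation*}
f_{tt}=L(f_{t})+\partial_{t}|\partial f|^{2}+E,\qquad E:=(\partial_{t}\Theta^{\overline{j}i})f_{i\overline{j}}+(\partial_{t}\tilde g^{\overline{j}i})Z(f)_{i\overline{j}};
\end{equation*}
since $\tilde\omega$ is built from the solution $u$ of \eqref{paragau} through \eqref{tildeomega}, Lemma \ref{lemhighorder2} shows $\partial_{t}\Theta$ and $\partial_{t}\tilde g$ are bounded tensors, so $E$ is a bounded tensor contracted with at most two derivatives of $f$. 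Substituting $f_{tt}$ into $L(G)-G_{t}$ and cancelling the two $L(f_{t})$ terms leaves the identity $L(G)-G_{t}=L(|\partial f|^{2})+(\alpha-1)\partial_{t}|\partial f|^{2}+\alpha E$.

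Next I apply the Bochner--Weitzenb\"ock formula for $L$ using the Chern connection of $\alpha$ and the Ricci identities \eqref{ricciidentity}: expanding $\Theta^{\overline{\ell}k}\partial_{k}\partial_{\overline{\ell}}\bigl(\Theta^{\overline{j}i}f_{i}f_{\overline{j}}\bigr)$ together with $\tr_{\tilde\omega}Z(|\partial f|^{2})$ gives
\begin{equation*}
L(|\partial f|^{2})=|\partial\partial f|_{\Theta}^{2}+|\partial\overline\partial f|_{\Theta}^{2}+2\,\mathrm{Re}\langle\partial f,\partial(\Delta_{\Theta}f)\rangle+\mathcal R,\qquad |\partial\overline\partial f|_{\Theta}^{2}:=\Theta^{\overline{j}i}\Theta^{\overline{\ell}k}f_{i\overline{\ell}}f_{k\overline{j}},
\end{equation*}
where $|\partial\partial f|_{\Theta}^{2}\ge0$ is the (anti)holomorphic Hessian norm and $\mathcal R$ collects the curvature terms, the torsion terms from \eqref{ricciidentity}, the terms in which a derivative lands on $\Theta^{\overline{j}i}$ or on $\tilde g^{\overline{j}i}$ (the latter inside $Z(|\partial f|^{2})$) and the contribution $\alpha E$; each of these is a product of a bounded tensor with at most two derivatives of $f$, so $|\mathcal R|\le\varepsilon(|\partial\partial f|_{\Theta}^{2}+|\partial\overline\partial f|_{\Theta}^{2})+C_{\varepsilon}(|\partial f|^{2}+1)$ for every $\varepsilon>0$. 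Substituting $\Delta_{\Theta}f=f_{t}-|\partial f|^{2}-\tr_{\tilde\omega}Z(f)$ turns the cross term into $2\mathrm{Re}\langle\partial f,\partial f_{t}\rangle-2\mathrm{Re}\langle\partial f,\partial|\partial f|^{2}\rangle$ up to an error of the same absorbable type; combining it with $(\alpha-1)\partial_{t}|\partial f|^{2}=2(\alpha-1)\mathrm{Re}\langle\partial f,\partial f_{t}\rangle+(\alpha-1)(\partial_{t}\Theta^{\overline{j}i})f_{i}f_{\overline{j}}$, the $\partial f_{t}$ terms add up to $2\alpha\,\mathrm{Re}\langle\partial f,\partial f_{t}\rangle$, and since $-2\mathrm{Re}\langle\partial f,\partial G\rangle=-2\mathrm{Re}\langle\partial f,\partial|\partial f|^{2}\rangle+2\alpha\,\mathrm{Re}\langle\partial f,\partial f_{t}\rangle$ these pieces reassemble exactly into $-2\mathrm{Re}\langle\partial f,\partial G\rangle$. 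Finally, for $\varepsilon$ small, $(1-\varepsilon)(|\partial\partial f|_{\Theta}^{2}+|\partial\overline\partial f|_{\Theta}^{2})\ge(1-\varepsilon)|\partial\overline\partial f|_{\Theta}^{2}\ge\tfrac{1-\varepsilon}{n}(\Delta_{\Theta}f)^{2}$ by Cauchy--Schwarz, and since $\tr_{\tilde\omega}Z(f)$ is linear in $\partial f$, $(\Delta_{\Theta}f)^{2}\ge(1-\varepsilon)(|\partial f|^{2}-f_{t})^{2}-C_{\varepsilon}|\partial f|^{2}$; choosing $\varepsilon$ so that $(1-\varepsilon)^{2}\ge\tfrac12$ yields the desired lower bound for $L(G)-G_{t}$, and hence \eqref{LF-Ft}.

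The main obstacle is the Bochner computation itself in the non-K\"ahler, non-metric setting: running the Weitzenb\"ock identity for $L$ with the Chern connection, tracking every torsion term produced by \eqref{ricciidentity} and every term in which derivatives fall on the position-dependent coefficients $\Theta^{\overline{j}i}$ and $\tilde g^{\overline{j}i}$ (including those hidden inside $Z(|\partial f|^{2})$), and then verifying that every one of them is dominated by $C(|\partial f|^{2}+1)$ plus an arbitrarily small multiple of the Hessian norms $|\partial\partial f|_{\Theta}^{2}+|\partial\overline\partial f|_{\Theta}^{2}$ — which relies on these norms controlling the full complex Hessian of $f$ up to terms linear in $\partial f$. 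A secondary, purely algebraic point is checking that the $\partial f_{t}$ and $\partial|\partial f|^{2}$ contributions recombine precisely into $-2\mathrm{Re}\langle\partial f,\partial F\rangle$, which is exactly what dictates the weight $\alpha$ in $G=|\partial f|^{2}-\alpha f_{t}$.
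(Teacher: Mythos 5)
Your proof is correct and follows essentially the same Li--Yau/Bochner route as the paper: differentiate $f_t=L(f)+|\partial f|^2$, apply a Bochner identity to $L(|\partial f|^2)$, absorb the errors coming from the $t$- and $x$-dependence of $\Theta$, $\tilde g$ and from $Z$ using Lemma \ref{lemhighorder2}, reassemble the gradient-times-$\partial f_t$ and gradient-times-$\partial|\partial f|^2$ pieces into $-2\mathrm{Re}\langle\partial f,\partial F\rangle$, and finish with Cauchy--Schwarz. The only organizational difference is that you divide out $t$ first (working with $G=F/t$) whereas the paper keeps $F$ throughout in \eqref{Ft}--\eqref{Fklbar}; this is a minor simplification, not a different argument. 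Note that your computation, like the paper's own (see the $t|\partial\overline\partial f|^2$ term in \eqref{LF-Ft1} fed through \eqref{cr}), produces $\frac{t}{2n}\left(|\partial f|^2-f_t\right)^2$ rather than the $\frac{1}{2n}\left(|\partial f|^2-f_t\right)^2$ displayed in \eqref{LF-Ft} — the missing $t$ in the lemma's statement appears to be a typographical slip, and the $t$ is what is actually used in Lemma \ref{harnacklem2}.
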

\begin{proof}
The ideas are originated from Li and Yau \cite{LY}, hence we can be brief and just point out the main differences.
For convenience, we introduce some notations as follows.
\begin{align*}
\langle \chi,\,\xi\rangle:=\Theta^{\overline{j}i}\chi_i\overline{\xi_j},\quad
|\partial f|^2:=\langle \partial f,\,\partial f\rangle, \quad
|\partial \overline{\partial} f|^2:=\Theta^{\overline{j}i}\Theta^{\overline{\ell}k}f_{i\overline{\ell}}f_{\overline{j}k},\quad
|D^2f|^2:=\Theta^{\overline{j}i}\Theta^{\overline{\ell}k}f_{ik}f_{\overline{j}\overline{\ell}},
\end{align*}
where $\chi$ and $\xi$ are $(1,0)$ forms.
By direct computation, we get
$$
L(f)-f_t=-|\partial f|^2,
$$
i.e.,
\begin{equation}\label{FandL}
F=-tL(f)-t(\alpha-1)f_t.
\end{equation}
Then we can deduce
\begin{equation*}
(L(f))_t=\frac{1}{t^2}F-\frac{1}{t}F_t-(\alpha-1)f_{tt}.
\end{equation*}
Again direct computation implies
\begin{equation}\label{Ft}
F_t=|\partial f|^2-\alpha f_t
+2t\mathrm{Re}\left\langle\partial f,\,\partial f_t\right\rangle
+t\left(\ppt\Theta^{\overline{j}i}\right)f_if_{\overline{j}}
-\alpha t f_{tt},
\end{equation}
and
\begin{align}
\label{Fklbar}
F_{k\overline{\ell}}
=&t\left[\left(\Theta^{\overline{j}i}\right)_{k\overline{\ell}}f_if_{\overline{j}}
+\left(\Theta^{\overline{j}i}\right)_{k}f_{i\overline{\ell}}f_{\overline{j}}
+\left(\Theta^{\overline{j}i}\right)_{k}f_if_{\overline{j}\overline{\ell}}\right.\\
&+\left(\Theta^{\overline{j}i}\right)_{\overline{\ell}}f_{ik}f_{\overline{j}}
+\Theta^{\overline{j}i}f_{ik\overline{\ell}}f_{\overline{j}}
+\Theta^{\overline{j}i}f_{ik}f_{\overline{j}\overline{\ell}}\nonumber\\
&+\left.\left(\Theta^{\overline{j}i}\right)_{\overline{\ell}}f_if_{\overline{j}k}
+\Theta^{\overline{j}i}f_{i\overline{\ell}}f_{\overline{j}k}
+\Theta^{\overline{j}i}f_if_{\overline{j}k\overline{\ell}}-\alpha f_{tk\overline{\ell}}
\right].\nonumber
\end{align}
Note that
\begin{align}
\label{3orderdao}
&t\Theta^{\overline{\ell}k} \Theta^{\overline{j}i}f_{ik\overline{\ell}}f_{\overline{j}}
+ t\Theta^{\overline{\ell}k} \Theta^{\overline{j}i}f_if_{\overline{j}k\overline{\ell}}\\
=&2t\mathrm{Re}\left\langle \partial f, \partial\left(\Theta^{\overline{\ell}k}f_{k\overline{\ell}}\right)\right\rangle
-t\Theta^{\overline{j}i}\left(\Theta^{\overline{\ell}k}\right)_if_{k\overline{\ell}}f_{\overline{j}}
-t\Theta^{\overline{j}i}\left(\Theta^{\overline{\ell}k}\right)_{\overline{j}}f_{k\overline{\ell}}f_{i}\nonumber\\
\geq&2t\mathrm{Re}\left\langle \partial f, \partial\left(\Theta^{\overline{\ell}k}f_{k\overline{\ell}}\right)\right\rangle
-\frac{C_1t}{\varepsilon}|\partial f|^2-\varepsilon t |\partial\overline{\partial}f|^2\nonumber\\
=&-2\mathrm{Re}\left\langle \partial f, \partial F\right\rangle
-2t\mathrm{Re}\left\langle \partial f, \partial \left(\tr_{\tilde \omega}Z(f)\right)\right\rangle
-2t(\alpha-1)\mathrm{Re}\left\langle\partial f,\,\partial f_t\right\rangle
-\frac{C_1t}{\varepsilon}|\partial f|^2-\varepsilon t |\partial\overline{\partial}f|^2\nonumber\\
=&-2\mathrm{Re}\left\langle \partial f, \partial F\right\rangle
-2t\mathrm{Re}\left\langle \partial f, \partial \left(\tr_{\tilde \omega}Z(f)\right)\right\rangle
-\frac{C_1t}{\varepsilon}|\partial f|^2-\varepsilon t |\partial\overline{\partial}f|^2\nonumber\\
&-(\alpha-1)F_t
+(\alpha-1)\left(|\partial f|^2-\alpha f_t\right)
+(\alpha-1)t\left(\ppt\Theta^{\overline{j}i}\right)f_if_{\overline{j}}
-(\alpha-1)\alpha tf_{tt}\nonumber\\
\geq&-2\mathrm{Re}\left\langle \partial f, \partial F\right\rangle
-\frac{C_2t}{\varepsilon}|\partial f|^2
-\varepsilon t |\partial\overline{\partial}f|^2
-\varepsilon t |D^2f|^2-C_3t|\partial f|^2\nonumber\\
&-(\alpha-1)F_t+(\alpha-1)\left(|\partial f|^2-\alpha f_t\right)
-\alpha(\alpha-1) tf_{tt},\nonumber
\end{align}
where for the first inequality we use Young's inequality, for the second equality we use \eqref{FandL}, for the third equality we use \eqref{Ft}, and for the second inequality we use Lemma \ref{lemhighorder2}, Young's inequality and the facts that $Z(f)$ is linear of $\partial f$ and $\overline{\partial}f$.

Also using Young's inequality, we have
\begin{align}
\label{ftt}
-\alpha t\Theta^{\overline{\ell}k}f_{tk\overline{\ell}}
=&\alpha t\left(\ppt\Theta^{\overline{\ell}k}\right)f_{k\overline{\ell}}
-\alpha t\ppt\left(\Theta^{\overline{\ell}k}f_{k\overline{\ell}}\right)\\
\geq &-\frac{C_4t}{\varepsilon}-\varepsilon t|\partial\overline{\partial}f|^2
+\alpha t\ppt \left(\tr_{\tilde\omega}Z(f)\right)
-\frac{\alpha}{t}F+\alpha F_t+\alpha(\alpha-1)tf_{tt}\nonumber\\
\geq&-\frac{C_5t}{\varepsilon}
-\varepsilon t|\partial\overline{\partial}f|^2
-C_6t|\partial f|^2
+\alpha t\left(\tr_{\tilde\omega}Z(f_t)\right)
-\frac{\alpha}{t}F+\alpha F_t+\alpha(\alpha-1)tf_{tt},\nonumber
\end{align}
where for the second inequality we also use Lemma \ref{lemhighorder2}.

According to the definition of $F$, Lemma \ref{lemhighorder2} and Young's inequality,  we can deduce
\begin{align}
\label{1oderterm}
\tr_{\tilde\omega}Z(F)=&t \left(\tr_{\tilde\omega}Z\left(\Theta^{\overline{j}i}f_if_{\overline{j}}\right)\right)
-\alpha t\left(\tr_{\tilde\omega} Z(f_t)\right)\\
\geq&-\frac{C_7t}{\varepsilon}|\partial f|^2
-C_8t|\partial f|^2
- \varepsilon t |\partial\overline{\partial }f|^2
-\varepsilon t|D^2 f|^2
-\alpha t\left(\tr_{\tilde\omega} Z(f_t)\right).\nonumber
\end{align}
Again using Young's inequality, we get
\begin{align}
\label{qitaterm}
&t\Theta^{\overline{\ell}k}
\left[\left(\Theta^{\overline{j}i}\right)_{k\overline{\ell}}f_if_{\overline{j}}
+\left(\Theta^{\overline{j}i}\right)_{k}f_{i\overline{\ell}}f_{\overline{j}}
+\left(\Theta^{\overline{j}i}\right)_{k}f_if_{\overline{j}\overline{\ell}}
+\left(\Theta^{\overline{j}i}\right)_{\overline{\ell}}f_{ik}f_{\overline{j}}
+\left(\Theta^{\overline{j}i}\right)_{\overline{\ell}}f_if_{\overline{j}k}\right]\\
\geq&-C_9t|\partial f|^2-\frac{C_{10}t}{\varepsilon}-\varepsilon t|\partial\overline{\partial}f|^2-\varepsilon t|D^2f|^2.\nonumber
\end{align}
Combining \eqref{Ft}, \eqref{Fklbar}, \eqref{3orderdao}, \eqref{ftt}, \eqref{1oderterm} and \eqref{qitaterm}, we can deduce
\begin{align}
\label{LF-Ft1}
L(F)-F_t
\geq&-\frac{C_5+C_{10}}{\varepsilon}t-2\mathrm{Re}\left\langle \partial f, \partial F\right\rangle
+(1-4\varepsilon)t|\partial\overline{\partial}f|^2
+(1-3\varepsilon)t|D^2f|^2\\
&+\left(C_3+C_6+C_8+C_9+\frac{C_2+C_7}{\varepsilon}\right)t|\partial f|^2
-\left(|\partial f|^2-\alpha f_t\right).\nonumber
\end{align}
From the Cauchy-Schwarz inequality, we have
\begin{align}
\label{cr}
|\partial\overline{\partial}f|^2
\geq&\frac{1}{n}\left(\Theta^{\overline{j}i}f_{i\overline{j}}\right)^2
=\frac{1}{n}\left(|\partial f|^2-f_t+\tr_{\tilde\omega}Z(f)\right)^2\\
\geq&\frac{1}{n}(1-\varepsilon)\left(|\partial f|^2-f_t\right)^2-C\varepsilon|\partial f|^2,\nonumber
\end{align}
where for the second inequality we use Young's inequality.

Then from \eqref{LF-Ft1} and \eqref{cr}, we can deduce \eqref{LF-Ft}.
\end{proof}
Using Lemma \ref{harnacklem1} and standard discussion, we can get
\begin{lem}
\label{harnacklem2}
Using the notations as in Lemma \ref{harnacklem1}, for any $t\in[0,\,\infty)$, there exist uniform constants $C_1$ and $C_2$ such that
\begin{align}
|\partial f|^2-\alpha f_t\leq C_1+\frac{C_2}{t}.
\end{align}
\end{lem}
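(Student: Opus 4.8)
The plan is to run the Li--Yau maximum principle argument on the auxiliary function $F=t\left(|\partial f|^2-\alpha f_t\right)$ over a finite time slice, exactly as in \cite{LY,Cao,gill}, using the differential inequality of Lemma \ref{harnacklem1} as the only analytic input (all uniform constants coming from Lemma \ref{lemhighorder2}).

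First I would fix $\tau>0$ and let $(x_0,t_0)$ be a point at which $F$ attains its maximum on the compact set $M\times[0,\tau]$. If $t_0=0$, then $F\le 0$ on all of $M\times[0,\tau]$, so $|\partial f|^2-\alpha f_t\le 0$ at time $\tau$ and the claimed bound is trivial there; likewise we may assume $F(x_0,t_0)>0$, i.e.\ $|\partial f|^2-\alpha f_t>0$ at $(x_0,t_0)$, since otherwise the same conclusion holds. At a spatial maximum of $F$ the ellipticity of $L$ gives $L(F)\le 0$ and $\partial F=0$, while maximality in time (whether $t_0<\tau$ or $t_0=\tau$) gives $F_t\ge 0$; hence $L(F)-F_t\le 0$ at $(x_0,t_0)$. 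Feeding this into Lemma \ref{harnacklem1} kills the term $-2\mathrm{Re}\langle\partial f,\partial F\rangle$ and leaves an inequality at $(x_0,t_0)$ relating $|\partial f|^2$, $f_t$, and $F(x_0,t_0)=t_0\left(|\partial f|^2-\alpha f_t\right)$.

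Next I would use $\alpha>1$ to write $|\partial f|^2-f_t=\tfrac1\alpha\left(|\partial f|^2-\alpha f_t\right)+\tfrac{\alpha-1}{\alpha}|\partial f|^2$, square, and retain both the $\left(|\partial f|^2-\alpha f_t\right)^2$ part and the $|\partial f|^4$ part (the cross term being nonnegative since $|\partial f|^2-\alpha f_t>0$ there). Multiplying through by $t_0$ and using Young's inequality to absorb the bad terms $Ct|\partial f|^2$ and $Ct$ against the $|\partial f|^4$ contribution, one is left with a quadratic inequality in $F(x_0,t_0)$ of the shape $F(x_0,t_0)^2\le C\,F(x_0,t_0)+C\,t_0^2$ with uniform $C$; solving it yields $F(x_0,t_0)\le C_1 t_0+C_2$. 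Since $t_0\le\tau$, this gives $F(x,\tau)\le C_1\tau+C_2$ for every $x\in M$, that is $|\partial f|^2-\alpha f_t\le C_1+C_2/\tau$ at time $\tau$; as $\tau>0$ was arbitrary, the lemma follows.

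The hard part is the bookkeeping of the powers of $t$ in the last step: the Young's--inequality absorptions must be arranged so that the resulting quadratic inequality in $F(x_0,t_0)$ carries no term of order higher than $t_0^2$ on its right-hand side, which is exactly what forces the $1/t$ (rather than a linearly growing) behaviour after dividing by $\tau$, and this in turn relies on the factor of $t$ present in the good quadratic term of the right-hand side of Lemma \ref{harnacklem1} matching the factor of $t$ in the bad term $Ct|\partial f|^2$. A minor technical point is that $f=\log\varphi$, hence $F$, is smooth on $M\times(0,T)$ by parabolic regularity, so the maximum principle applies, and the constants appearing throughout are uniform by Lemma \ref{lemhighorder2}.
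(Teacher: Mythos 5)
Your proposal is correct and is exactly the classical Li--Yau maximum principle argument from \cite{LY,Cao,gill}, which is precisely the standard route the paper itself points to and omits ``considering the length of the paper.'' One observation worth making explicit: your Young-inequality absorption (trading $Ct|\partial f|^2$ and $Ct$ against the $|\partial f|^4$ contribution) tacitly and correctly relies on there being a factor of $t$ multiplying the good quadratic term $(|\partial f|^2-f_t)^2$; that factor is visible in the paper's derivation via the term $(1-4\varepsilon)t|\partial\overline{\partial}f|^2$ in \eqref{LF-Ft1} combined with \eqref{cr}, but appears to have been dropped in the final displayed statement \eqref{LF-Ft} of Lemma \ref{harnacklem1} --- you have implicitly repaired this typo, and without it the power-of-$t$ bookkeeping you highlight would only yield $F(x_0,t_0)\le C\,t_0^{2}$, hence the weaker linearly-growing bound $|\partial f|^2-\alpha f_t\le Ct$ rather than $C_1+C_2/t$.
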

Then by this lemma we can deduce the following theorem which will be used in the convergence discussion.
\begin{thm}
\label{harnackthm}
Using the notations as in Lemma \ref{harnacklem1}, for any $0<t_1<t_2$, there holds
\begin{align}
\sup_{x\in M}\varphi(x,t_1)\leq \inf_{x\in M}\varphi(x,t_2)\left(\frac{t_2}{t_1}\right)^{C_1}
\exp\left(\frac{C_2}{t_2-t_1}+C_3(t_2-t_1)\right),
\end{align}
where $C_1,\;C_2$ and $C_3$ are uniform constants.
\end{thm}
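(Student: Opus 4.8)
The plan is to deduce the parabolic Harnack inequality from the differential Harnack estimate of Lemma \ref{harnacklem2} by integrating along a path in space-time, exactly as in the classical Li--Yau argument \cite{LY} (see also \cite{Cao,gill}). First I would fix $0<t_1<t_2$ and two points $x_1,x_2\in M$, and choose a smooth path $\eta:[t_1,t_2]\to M$ with $\eta(t_1)=x_1$, $\eta(t_2)=x_2$. Along this path consider $\phi(s):=f(\eta(s),s)=\log\varphi(\eta(s),s)$ and compute
\begin{align*}
\frac{\md}{\md s}\phi(s)=f_t(\eta(s),s)+2\,\mathrm{Re}\left\langle\partial f,\dot\eta\right\rangle_{\alpha}\geq f_t-\frac{|\dot\eta|_{\alpha}^2}{\epsilon}-\epsilon|\partial f|_{\alpha}^2,
\end{align*}
where the inner product and norms are taken with respect to the fixed background metric $\alpha$ (which is uniformly equivalent to the evolving quantities by Lemma \ref{lemhighorder2}, so all constants below are uniform). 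Combining this with Lemma \ref{harnacklem2}, which gives $f_t\ge\tfrac1\alpha\bigl(|\partial f|_\alpha^2-C_1-\tfrac{C_2}{s}\bigr)$, and choosing $\epsilon$ so that the $|\partial f|_\alpha^2$ terms cancel or have a favorable sign, one obtains a lower bound of the form $\tfrac{\md}{\md s}\phi(s)\ge -\tfrac{C}{s}-C\,|\dot\eta|_\alpha^2-C$.

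The second step is to integrate this inequality from $t_1$ to $t_2$. Choosing $\eta$ to be (a reparametrization of) a minimizing $\alpha$-geodesic from $x_1$ to $x_2$ traversed at constant speed, so that $\int_{t_1}^{t_2}|\dot\eta|_\alpha^2\,\md s=\tfrac{d_\alpha(x_1,x_2)^2}{t_2-t_1}\le\tfrac{(\mathrm{diam}_\alpha M)^2}{t_2-t_1}$, integration yields
\begin{align*}
\phi(x_2,t_2)-\phi(x_1,t_1)\ge -C_1'\log\frac{t_2}{t_1}-\frac{C_2'}{t_2-t_1}-C_3'(t_2-t_1),
\end{align*}
after absorbing $\mathrm{diam}_\alpha M$ into the constants. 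Exponentiating and recalling $\varphi=e^f$, this is precisely
\begin{align*}
\varphi(x_1,t_1)\le\varphi(x_2,t_2)\left(\frac{t_2}{t_1}\right)^{C_1}\exp\left(\frac{C_2}{t_2-t_1}+C_3(t_2-t_1)\right);
\end{align*}
taking the supremum over $x_1$ and the infimum over $x_2$ gives the claim.

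The main obstacle is not the integration itself, which is routine, but making sure all the constants produced are genuinely \emph{uniform}, i.e.\ independent of $t$. This is where Lemma \ref{lemhighorder2} is essential: it guarantees uniform bounds on the evolving metric $\tilde g$, on $\Theta^{\overline j i}$, and on the torsion-type quantities entering $Z$, so that the norms $|\cdot|_\alpha$ and the norms natural to the operator $L$ are comparable with $t$-independent constants, and so that the constants $C_1,C_2$ of Lemma \ref{harnacklem2} are uniform. One also has to be slightly careful that the estimate of Lemma \ref{harnacklem2} degenerates as $s\to 0^+$ like $C_2/s$; this is exactly what produces the $(t_2/t_1)^{C_1}$ factor and forces the restriction $t_1>0$ in the statement, so no extra work is needed there. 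With these uniformities in hand the argument is the standard Li--Yau integration and I would present it briefly.
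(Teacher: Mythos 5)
Your proposal is correct and it is exactly the standard Li--Yau path-integration argument that the paper deliberately omits (the text after Theorem \ref{harnackthm} says the proof is ``standard discussion'' and refers to \cite{chu1607,gill}, where the same integration is carried out). Defining $\phi(s)=f(\eta(s),s)$ along a constant-speed minimizing geodesic, using Lemma \ref{harnacklem2} together with Young's inequality on $2\,\mathrm{Re}\langle\partial f,\dot\eta\rangle$ to eliminate the $|\partial f|^2$ term, integrating in $s$, and exponentiating is precisely the intended route; the $(t_2/t_1)^{C_1}$ factor comes from the $C_2/s$ term, the $e^{C_2/(t_2-t_1)}$ factor from the energy $\int|\dot\eta|^2$ bounded by $(\mathrm{diam}_\alpha M)^2/(t_2-t_1)$, and the $e^{C_3(t_2-t_1)}$ factor from the constant term. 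One small point worth making explicit (which you flag, correctly): the norm $|\partial f|^2=\Theta^{\overline{j}i}f_if_{\overline{j}}$ in Lemmas \ref{harnacklem1}--\ref{harnacklem2} is taken with respect to the symbol $\Theta$ of $L$, not the background metric $\alpha$, so when you apply Cauchy--Schwarz in the $\alpha$-norm you should either use the $\Theta$-norm there too, or invoke the uniform equivalence of the two norms furnished by Lemma \ref{lemhighorder2} and absorb the comparison constant into the final $C_i$; either way the argument closes as you describe.
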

We remark that the proofs of Lemma \ref{harnacklem2} and Theorem \ref{harnackthm} are standard discussion. Here considering the length of the paper, we omit them (see for example \cite{chu1607,gill}).
\section{Proof of the convergence in the main theorem}
\label{convergence}
In this section, we will give the proof of the second part of Theorem \ref{mainthm}. Since the discussion is standard, we will be brief and just point out the main differences.
\begin{proof}[Proof of the convergence of Theorem \ref{mainthm}]
For any positive integer $m$, we define
\begin{align*}
\xi_{m}(x,t)=&\sup_{y\in M}\dot u(y,m-1)-\dot u(x,m-1+t)\\
\psi_{m}(x,t)=&\dot u(x,m-1+t)-\inf_{y\in M}\dot u(y,m-1).
\end{align*}
These functions satisfy the equation \eqref{parahar}.

Without loss of generality, we assume that $\dot u(x,m-1)$ is not constant. The maximum principle implies that
$\xi_m$ and $\psi_m$ are positive solutions to \eqref{parahar}. Using Theorem \ref{harnackthm} with $t_1=1/2$ and $t_2=1$, we get
\begin{align*}
\sup_{y\in M}\dot u(y,m-1)-\inf_{y\in M}\dot u(x,m-1/2)
\leq C\left(\sup_{y\in M}\dot u(y,m-1)-\sup_{y\in M}\dot u(x,m)\right)\\
\sup_{y\in M}\dot u(y,m-1/2)-\inf_{y\in M}\dot u(x,m-1)
\leq C\left(\inf_{y\in M}\dot u(y,m)-\inf_{y\in M}\dot u(x,m-1)\right),
\end{align*}
which implies
\begin{align*}
\theta(m-1)+\theta(m-1/2)\leq C\left(\theta(m-1)-\theta(m)\right),
\end{align*}
where $\theta(t)=\sup_{y\in M}\dot u(y,t)-\inf_{y\in M}\dot u(y,t)$.
Then by induction, we get
\begin{align}
\theta(t)\leq Ce^{-\eta t},
\end{align}
where $\eta=\log\frac{C}{C-1}$.

Since $\int_M\tilde u\alpha^n=0$, we obtain $\int_M\frac{\partial \tilde u}{\partial t}\alpha^n=0$, which implies that
there exists a point $y\in M$ with $\frac{\partial\tilde u}{\partial t}(y,t)=0$. Then we have
\begin{align}
\label{tildeut}
\left|\frac{\partial \tilde { u}}{\partial t}(x,t)\right|
=&\left|\frac{\partial \tilde { u}}{\partial t}(x,t)-\frac{\partial \tilde { u}}{\partial t}(y,t)\right|\\
=&\left|\frac{\partial u}{\partial t}(x,t)-\frac{\partial u}{\partial t}(y,t)\right|\nonumber\\
\leq&Ce^{-\eta t}.\nonumber
\end{align}
Consider the quantity $Q=\tilde u+\frac{C}{\eta}e^{-\eta t}$. Then we have
$$
\frac{\partial Q}{\partial t}=\frac{\partial \tilde u}{\partial t}-Ce^{-\eta t}\leq 0,
$$
which implies that $ Q$ converges uniformly as $t\rightarrow \infty$ since $Q$ is uniformly bounded and denote the limit by $\tilde u_{\infty}$.
Furthermore, we have
$$
\lim\limits_{t\rightarrow\infty}\tilde u
=\lim\limits_{t\rightarrow\infty}Q-\lim\limits_{t\rightarrow\infty}\frac{C}{\eta}e^{-\eta t}=\tilde u_{\infty}.
$$
It follows that the convergence of $\tilde u$ to $\tilde u_{\infty}$ is actually $C^{\infty}$ using the argument by contradiction and the Arzela-Ascoli theorem.
Note that
$$
\frac{\partial \tilde u}{\partial t}=
\log\frac{\left(\varpi+\frac{1}{n-1}\left[(\Delta \tilde u)\alpha-\ddbar \tilde u\right]+Z(\tilde u)\right)^n}{\alpha^n}-\psi-\frac{1}{\mathrm{Vol}_{\alpha}(M)}\int_M\frac{\partial u}{\partial t}\alpha^n.
$$
Letting $t\longrightarrow \infty$, \eqref{tildeut} implies
$$
\log\frac{\left(\varpi+\frac{1}{n-1}\left[(\Delta \tilde u_{\infty})\alpha-\ddbar \tilde u_{\infty}\right]+Z(\tilde u_{\infty})\right)^n}{\alpha^n}=\psi+\tilde b,
$$
where
\begin{equation}
\label{btilde}
\tilde b=\frac{1}{\mathrm{Vol}_{\alpha}(M)}\int_M\left(\log\frac{\left(\varpi+\frac{1}{n-1}\left[(\Delta \tilde u_{\infty})\alpha-\ddbar \tilde u_{\infty}\right]+Z(\tilde u_{\infty})\right)^n}{\alpha^n}-\psi\right)\alpha^n,
\end{equation}
as required.

\end{proof}

\end{CJK}
\end{document}